\documentclass[11pt]{article}

\usepackage{amsmath,amsthm,amssymb,dsfont}
\usepackage{cite}
\usepackage{algpseudocode}
\usepackage{graphicx}
\usepackage{subfigure}
\usepackage{array}
\usepackage{tabularx}
\usepackage{indentfirst}
\usepackage{xspace}
\usepackage{xcolor}
\usepackage{defs}
\usepackage{float}
\usepackage{bm}
\usepackage{centernot}
\usepackage{dsfont}
\usepackage{nth}
\usepackage{enumitem}
\usepackage{url}
\usepackage[left=3cm,right=3cm,top=2cm,bottom=2.5cm,]{geometry}

\DeclareMathOperator{\diag}{diag}

\graphicspath{{images/}}

\title{
  Stein Unbiased GrAdient estimator of the Risk (SUGAR)\\
  for multiple parameter selection%
  \thanks{This work has been supported by the European Research Council (ERC project SIGMA-Vision)}
  }

\author{Charles-Alban Deledalle\footnotemark[2]\;\footnotemark[5] \and
        Samuel Vaiter\footnotemark[3] \and
        Jalal Fadili\footnotemark[4] \and
        Gabriel Peyr\'e\footnotemark[3]
}

\begin{document}

\sloppy

\maketitle

%\newcommand{\slugmaster}{}%
%\slugger{siims}{2013}{xx}{x}{x--x}}%slugger should be set to juq, siads, sifin, or siims

\renewcommand{\thefootnote}{\fnsymbol{footnote}}

\footnotetext[2]{IMB, CNRS-Universit\'e Bordeaux, Bordeaux, France.}
\footnotetext[3]{CEREMADE, CNRS-Paris-Dauphine, Paris, France.}
\footnotetext[4]{GREYC, CNRS-ENSICAEN-Universit\'e de Caen, France.}
\footnotetext[5]{Part of this work was completed while the first author was at CEREMADE\footnotemark[3].}

{\footnotesize
\begin{abstract}
Algorithms to solve variational regularization of ill-posed inverse problems usually involve operators that depend on a collection of continuous parameters.
When these operators enjoy some (local) regularity, these parameters
can be selected using the so-called Stein Unbiased Risk Estimate (SURE).
While this selection is usually performed
by exhaustive search,
we address in this work the problem of using
the SURE to efficiently optimize for a collection of
continuous parameters of the model.
When considering non-smooth regularizers, such as the popular
$\ell_1$-norm corresponding to soft-thresholding mapping,
the SURE is a discontinuous function
of the parameters preventing the use of gradient descent
optimization techniques. Instead, we focus on an
approximation of the SURE based on finite differences
as proposed in \cite{ramani2008montecarlosure}.
Under mild assumptions on the estimation mapping, we show that this approximation is a weakly differentiable
function of the parameters and its weak gradient,
coined the Stein Unbiased GrAdient estimator of the Risk (SUGAR),
provides an asymptotically (with respect to the data dimension)
unbiased estimate of the gradient of the risk.
Moreover, in the particular case of soft-thresholding,
it is proved to be also a consistent estimator.
This gradient estimate can then be used as a basis to perform a quasi-Newton optimization.
The computation of the SUGAR
relies on the closed-form (weak) differentiation of the non-smooth function.
We provide its expression for a large class of
iterative methods including proximal splitting ones and apply our strategy to
regularizations involving non-smooth convex structured penalties.
Illustrations on various image
restoration and matrix completion problems are given.
%Sparse priors have became extremely popular to regularize inverse problems.
%Sparsity is usually expressed with a non-smoothed convex model depending on a collection of parameters
%that can be selected using the so-called Stein Unbiased Risk Estimation (SURE).
%While this selection is usually performed by exhaustive search or line search,
%we address here the question on how to make use of the SURE for optimizing
%efficiently the collection of parameters of the model.
%To this end, we introduce the Stein Unbiased GrAdient Risk estimator (SUGAR)
%as an (asymptotically) unbiased estimator of the gradient of the risk.
%Furthermore, we show that a biased version of it can indeed be used as
%a basis for quasi-Newton optimizations. Illustrations on various image
%restoration and matrix completion problems are given.
\end{abstract}

\noindent
Keywords: Inverse problem, risk estimation, SURE, parameter selection,
  proximal splitting, sparsity
}
%\begin{AMS}\end{AMS}

\pagestyle{myheadings}
\thispagestyle{plain}
\markboth{}{}

%\tableofcontents

\section{Introduction}

In this paper, we consider the recovery problem of a signal $x_0 \in \Xx$
(where $\Xx = \RR^N$ or is a suitable finite-dimensional Hilbert space
that can be identified to $\RR^N$)
from a realization $y \in \Yy = \RR^P$ of the normal random vector
\begin{align}\label{ew:forward_model}
  Y = \mu_0 + W
  \qwithq
  \mu_0 = \Phi x_0
\end{align}
where $W \sim \Nn(0,\si^2\Id_P)$, and the linear imaging operator $\Phi: \Xx \to \Yy$ entails some loss of information. Typically, $P=\dim(\Yy)$ is smaller than $N=\dim(\Xx)$, or $\Phi$ is rank-deficient, and the recovery problem is ill-posed.

Let $(y, \theta) \mapsto x(y,\theta)$ be some recovery mapping, possibly multivalued, which attempts to approach $x_0$ from a given realization $y \in \Yy$ of $Y$ and is parametrized by a collection of continuous parameters $\theta \in \Theta$. Throughout, $\Theta$ is considered as a subset of a linear subspace of dimension $\dim(\Theta)$. We also denote $\mu(y, \theta)=\Phi x(y,\theta) \in \Yy$ and assume in the rest of the paper that it is always a single-valued mapping though $x(y,\theta)$ may not.

Depending on the smoothness of the mapping $y \mapsto \mu(y, \theta)$, the recovered 
estimate enjoys different regularity properties. For instance, $\mu(y,\theta)$ can be built by solving a variational problem with some regularizing penalty parametrized by $\theta$ (see the example in \eqref{eq:optim_energy}, as well as Section~\ref{sec:formal_diff} and \ref{sec:numeric}). This regularization is generally chosen so as to preserve/promote the interesting structure underlying $x_0$, e.g.~singularities, textures, etc. Also, depending on its choice and that of the data fidelity, the resulting mapping $y \mapsto \mu(y, \theta)$ may be smooth or not. To cover most of these situations, throughout the paper, we will assume that $(y, \theta) \mapsto \mu(y, \theta)$ is {\textit{weakly differentiable}} with respect to both the observation $y$, and the collection of parameters $\theta$.

Recall that for a locally integrable function $f: a \in \Omega \mapsto \RR$, $\Omega$ is an open subset of $\RR^N$, its weak partial derivative with respect to $a_i$ in $\Omega$ is the locally integrable function $g_i$ on $\Omega$ such that
\[
\int_{\Omega} g_i(a) \varphi(a) \d a = - \int_{\Omega} f(a) \frac{\partial \varphi(a)}{\partial a_i} \d a
\] 
holds for all functions $\varphi \in C_c^1(\Omega)$, i.e.~the space of continuously differentiable functions of compact support. The weak partial derivative, if it exists, is uniquely defined Lebesgue-almost everywhere (a.e.). Thus we write
\[
g_i = \frac{\partial f}{\partial a_i}
\]
and all such pointwise relations involving weak derivatives will be accordingly understood to hold Lebesgue-a.e. A function is said to be weakly differentiable if all its weak partial derivatives exist. Similarly, a vector-valued function $h: a \in \RR^N \mapsto h(a)=(h_1(a),\ldots,h_P(a)) \in \RR^P$ is weakly differentiable if $h_k(a)$ is weakly differentiable $\forall k \in \{1,\ldots,P\}$, and we will denote $\JACs{}{h}(a)$ its weak Jacobian, and $\GRADs{}{g}(a) = \JACs{}{h}(a)^*$ its adjoint. Remark that weak differentiation concepts boil down to the classical ones when the considered function is $\Cc^1$. A comprehensive account on weak differentiability can be found in e.g.~\cite{EvansGariepy92,GilbargTrudinger98}.
%may be either not assumed to be non-smooth function where the
%parametrization of the non-smoothness typically depends on $\theta$.
%Compared with smooth function, non-smooth functions
%can enforce to recover singularities of $x_0$ masked by the kernel of $\Phi$
%or impared by the noise,
%such as edges or sharp textures in images [REF].

Getting back to the estimator $x(y, \theta)$, we now discuss some typical examples covered in this paper.
\begin{itemize}[leftmargin=*]
\item Given $(y, \theta)$, consider a minimizer of a convex variational problem of the form
\begin{align}\label{eq:optim_energy}
  x(y, \theta) = \uArgmin{x \in \Xx} \{E(x,y,\theta) = H(y,\Phi x) + R(x, \theta)\}
\end{align}
%\GP{It is ackward to write $x(y, \theta) \in$ since before $x(y, \theta)$ was defined as a set-valued map. A similar issue appear at several place in the manuscript. To be consistent, it would better to write $x(y, \theta) =$ and recall here (and probably from time to time in the manuscript to avoid confusion) that it is actually a set. }
where $x(y, \theta)$ is the set of minimizers of $x \mapsto E(x,y,\theta)$ which is
considered nonempty (the minimizer may not be unique but is assumed to exist). %\GP{rephase what follow because it is a set.} and the inclusion highlights the fact that the minimizer may not be unique.
The data fidelity term $x \mapsto H(y,\Phi x)$ is defined using a strongly convex map $\mu \mapsto H(y,\mu)$.
The regularization term $x \mapsto R(x,\theta)$ is assumed to be a closed proper and convex function, that accounts for the prior structure of $x_0$. 
Typical priors correspond to non-smooth regularizers such as sparsity in a suitable domain, e.g.~Fourier, wavelet \cite{mallat2009wavelet}, or gradient \cite{rudin1992nonlinear}. Such regularizers are usually parametrized with a collection of parameters $\theta$. 
%
%\GP{I find the following sentence not clear at all. Either remove, or maybe rephrase as:  }
A typical example is $R(x,\th) = \th R_0(x)$ where $\th \in \RR^+$ is a scaling which controls the strength of the regularization. Of course, more complicated (multi-parameters) regularizations, are often considered in the applications, and our methodology aims at dealing with these higher dimensional sets of parameters.
%One of these parameters is typically the regularization one, lying in $]0,+\infty[$, which balances the regularization term and the data fidelity term.

An important observation is that even though $x(y, \theta)$ may not be a singleton (minimizer of $E(x,y,\theta)$ may not be unique), strict convexity of $H(y,\cdot)$ implies that all minimizers share the same image under $\Phi$, see e.g.~\cite{vaiter2012local}.
%it is clear that the energy $x \mapsto E(x,y,\theta)$ is strictly (even strongly) convex on the subspace
%$\ker(\Phi)^\bot$, and hence,
Hence $(y, \theta) \mapsto \mu(y, \theta)$ is defined without ambiguity as a single-valued mapping. Moreover, strong convexity of $H(y,\cdot)$ implies that $y \mapsto \mu(y, \theta)$ is non-expansive (i.e.~uniformly 1-Lipschitz) \cite{vaiter2014dof}, hence weakly differentiable \cite[Theorem~5, Section~4.2.3]{EvansGariepy92}.
%\footnote{in fact, for all solutions $x_1, x_2$, we have $x_1 - x_2 \in \ker \Phi$}%
%\cite{vaiter2012local} [OTHER REF?].

\item Consider now the $\ell$-th iterate, denoted by $\il{x}(y, \theta)$, of
an iterative algorithm converging to a fixed point of an operator acting on $\Xx$.
In this case, $\theta$ can include the parameters of
the fixed point operator, as well as other continuous parameters inherent
to the fixed point iteration (such as, e.g., step sizes). 
Section~\ref{sec:formal_diff} is completely dedicated to this setting, and appropriate sufficient conditions
will be exhibited to ensure weak differentiability of $\il{x}(y, \theta)$ with respect to both its arguments. 
%The function $(y, \theta) \mapsto \il{\mu}(y, \theta) = \Phi \il{x}(y, \theta)$
%inherits its weakly differentiability from the Lipschitz continuity of
%$(y, \theta) \mapsto \il{x}(y, \theta)$ due to the composition
%of non-expansive operators [REF].

This general setting encompasses the case of proximal splitting methods that have become popular to solve large-scale optimization problems of the form \eqref{eq:optim_energy}, especially with convex non-smooth terms, e.g.~those encountered in sparsity regularization. The precise splitting algorithm to be used depends on the structure of the optimization problem at hand. See for instance~\cite{CombettesPesquet09,BauschkeCombettes11} for an overview. Some of these algorithms are considered in detail in Section~\ref{sec:formal_diff}.
%The precise algorithm to be used depends on the specific structure of the problem. The forward-backward (FB) algorithm handles the sum of a smooth and a simple function (for which the proximal mapping can be computed in closed form), see for instance~\cite{CombettesWajs05}. The Douglas-Rachford (DR) algorithm~\cite{Combettes2007a} does not make any smoothness assumption, but requires that the functional is split into a sum of simple functions. The generalized foward-backward (GFB) algorithm~\cite{raguet-gfb} is an hybridization between FB and DR, thus enabling to take into account a smooth function and an arbitrary number of simple functions. Lastly, let us mention primal-dual schemes, such as the one recently proposed by Chambolle and Pock~\cite{chambolle2011first}, that enables to minimize compositions of linear operators and simple functions.\\
\end{itemize}

The choice of $\theta$ is generally a challenging task, especially as the dimension of $\Theta$ gets large. Ideally, one would like to choose the parameters $\theta^\star$ that makes $\mu(y, \theta^\star)$ (or some appropriate image of it) as faithful as possible to $\mu_0$ (or some appropriate image of it). Formally, this can be cast as selecting $\theta^\star$ that minimizes the expected reconstruction error (a.k.a., mean-squared error or quadratic risk), i.e.
\begin{align}\label{eq:risk_optim}
  \theta^\star \in \uArgmin{\theta \in \Theta}
  \{\ARISK{\mu}(\mu_0, \theta)
  =
  \EE_W \norm{A (\mu(Y, \theta) - \mu_0)}^2 \}
\end{align}
where the matrix $A  \in R^{M \times P}$ is typically chosen to counterbalance the effect of $\Phi$, see Section~\ref{sec:sure} for a precise discussion.

If $\theta \mapsto \ARISK{\mu}(\mu_0, \theta)$ were sufficiently smooth, at least locally (e.g.~Lipschitz), one could expect to solve \eqref{eq:risk_optim} using a (sub)gradient-descent scheme relying on the (weak) gradient of the risk $\GRAD{2}{\ARISK{\mu}}(\mu_0, \theta)$, where the subscript 2 specifies that the (weak) gradient is with respect to the second argument $\theta$.
%\footnote{Here we assume some smoothness in the neighborhood of local minima.}
%relying on the gradient of $\theta \mapsto \ARISK{\mu}(\mu_0, \theta)$, denoted $\GRAD{2}{\ARISK{\mu}}(\mu_0, \theta)$ whenever it exists, 
%defined, for almost all $\theta$, as
%\begin{align}\label{eq:risk_grad}
%  \GRAD{2}{\ARISK{\mu}}(\mu_0, \theta)
%  =
%  \EE_W [ 2 \JACs{2}{\mu}(y, \theta)^\star A^* A (\mu(y, \theta) - \mu_0) ]
%\end{align}
% where, for multi-variate functions
% $a \in \RR^N \mapsto f(\ldots, a_k, \ldots) \in \RR^P$
% and
% $a \in \RR^N \mapsto g(\ldots, a_k, \ldots) \in \RR^P$ differentiable at point $a$,
% we use the notations
% \begin{align*}
%     \JACs{k}{f}(\ldots, a'_k, \ldots)
%     =
%     \pda{f(\ldots, a_k, \ldots)}{a_k}{a_k = a'_k}
%     \qandq
%     \GRADs{k}{g}(\ldots, a'_k, \ldots)
%     =
%     \pda{g(\ldots, a_k, \ldots)}{a_k}{a_k = a'_k}^*~.
% \end{align*}
%Solutions of \eqref{eq:risk_optim} can then be approached using a gradient-descent algorithm relying on the evaluation of the (weak) Jacobian matrix $\JACs{2}{\mu}(y, \theta)$.
However, this would only apply if $\mu_0$ were available.
In the context of our observation model \eqref{ew:forward_model},
$\mu_0$ is however considered to be unknown. Our motivation is then to build
an estimator of $\GRAD{2}{\ARISK{\mu}}(\mu_0, \theta)$
that depends solely on $y$, without prior knowledge of $\mu_0$.

Toward this goal, we adopt the framework of the (generalized) Stein Unbiased Risk Estimator (SURE) \cite{stein1981estimation,eldar-gsure,pesquet-deconv,raphan2011least,vaiter2012local}.
For a fixed $\theta$, the celebrated Stein's lemma \cite{stein1981estimation} allows to unbiasedly estimate $\ARISK{\mu}(\mu_0, \theta)$ through the weak Jacobian $\JACs{1}{\mu}(y, \theta)$, where the subscript $1$ specifies that the (weak) Jacobian is with respect to the first argument $y$.
This idea have been exploited for years in several statistical and signal processing applications,
typically for selecting thresholds in wavelet based reconstruction algorithms,
see e.g.~\cite{donoho1995adapting,benazza2005building,blu2007surelet,raphan2008optimal,chaux2008nonlinear,pesquet-deconv,ramani2012regularization,deledalle2013score}.
Given such an estimator $\AERISK{\mu}(y, \theta)$ (see Section~\ref{sec:sure}), the idea is so to replace the optimization problem \eqref{eq:risk_optim} with
\begin{align}\label{eq:_est_risk_optim}
  \theta^\star \in \uArgmin{\theta \in \Theta} \AERISK{\mu}(y, \theta)~.
\end{align}
Providing that the variance of $\frac{1}{P}\AERISK{\mu}(Y ,\theta)$
can be made arbitrarily small or even asymptotically vanishing as $P$ increases, so that
it becomes a consistent estimator of $\frac{1}{P}\ARISK{\mu}(\mu_0 ,\theta)$,
one can expect that the minimizers of \eqref{eq:_est_risk_optim}
becomes close to those of \eqref{eq:risk_optim}.

It remains to find an efficient way to solve the optimization \eqref{eq:_est_risk_optim}.
Again, a (sub)gradient-descent algorithm can qualify as a good candidate if
$\theta \mapsto \AERISK{\mu}(y, \theta)$ were sufficiently smooth.
To our knowledge, only \cite{deledalle2010pnl} have performed such an optimization
with Newton's method where $(y, \theta) \mapsto \AERISK{\mu}(y, \theta)$ was
$C^\infty$.
Unfortunately, being a function of $\JACs{1}{\mu}(y, \theta)$, 
$\theta \mapsto \AERISK{\mu}(y, \theta)$ is in general not differentiable, not even continuous (think of a simple soft-thresholding).
This then precludes the use of standard descent schemes.

%being a function of $\JACs{1}{\mu}(y, \theta)$, the SURE might have discontinuities where $\theta \mapsto \mu(y, \theta)$ is not differentiable. The difficulty lies then in the fact that in general $\theta \mapsto \AERISK{\mu}(y, \theta)$ is not differentiable, not even continuous, preventing the use of standard gradient descent approaches.
The common practice has been to apply an exhaustive search
by evaluating the risk estimate $\AERISK{\mu}(y, \theta)$ at different values of $\theta$.
Even if in some particular cases this can be done efficiently
(see for instance \cite{donoho1995adapting}), the computational expense can become prohibitive in general
especially as $\dim(\Theta)$ increases.

Derivative-free optimization algorithms have also been investigated
(see for instance \cite{ramani2008montecarlosure}
for the case of 2 parameters). But such approaches
typically do not scale up to problems where
$\Theta$ has a linear vector space structure with dimension larger than 2.
Their performance are known to degrade exponentially with problem size, and they require to compute
a lower and an upper bound on the optimal value over a given region.

%where the set of minimizers is considered nonempty. Here $E(x,y)$ is an energy functional parametrized by the observations $y \in \Yy$.
%In some cases (e.g.~total variation regularization), one directly has $f(y)=x(y)$, but for sparse regularization in a redundant synthesis dictionary, the latter maps coefficients $x(y)$ to images $f(y)$. We then make a distinction between $f(y)$ and $x(y)$ in the following.
%In practice, $E(x,y)$ (and hence $x(y)$) also depend on 
%We will so explicitly
%highlight this dependency by considering the functions $x(y, \theta)$ and
%$E(x, y, \theta)$.

%In the following, we will consider that $y \mapsto E(x, y, \theta)$
%is a convex non-smooth energy. Moreover, we will restrict
%$\Theta$ to be a continuous parameter space. In this case,
%we will see that solutions of \eqref{eq:optim_energy} can be
%indeed provided by an (iterative) algorithm so that
%$x(y, \theta)$ is a weakly differentiable function of $(y, \theta)$.

\subsection*{Contributions}
In this paper, we address the challenging problem of
solving efficiently \eqref{eq:_est_risk_optim}:
a main subject of interest for applications that
has been barely investigated.
Our main contribution (Section~\ref{sec:risk_optimization}) is an effective strategy
to optimize automatically a collection of parameters $\theta$ independently of their dimension.
While classical unbiased risk estimates entail optimizing a
non-continuous function of the parameters, we show that the biased risk estimator
introduced in \cite{ramani2008montecarlosure} is differentiable in the weak sense.
This allows us, whenever the derivatives exist, to perform a quasi-Newton optimization
driven by a biased estimator of the
gradient of the risk based on the evaluation
of $\JACs{2}{\mu}(y, \theta)$.
Such optimization technique can be provably faster thanks to first-order information compared
to derivative-free approaches.
We prove that, under mild assumptions, this estimator is asymptotically (with respect to $P$) unbiased,
hence the name: Stein Unbiased GrAdient estimator of the Risk (SUGAR). %\JF{Same comment as the for the title.}
Moreover, in the particular case of soft-thresholding, we go a step further and show
that it is actually a consistent estimator of the gradient of the risk.

As a second contribution (Section~\ref{sec:formal_diff}), we propose a versatile approach to compute the derivatives $\JACs{1}{\mu}(y, \theta)$
and $\JACs{2}{\mu}(y, \theta)$, involved respectively in the computation of the SURE and SUGAR,
when $\mu(y, \theta)$ is computed through an iterative algorithm, typically proximal splitting methods.
We illustrate the versatility of our method by applying it to both primal (forward-backward~\cite{CombettesWajs05}, Douglas-Rachford~\cite{Combettes2007a} and generalized foward-backward~\cite{raguet-gfb}) and primal-dual \cite{chambolle2011first} algorithms (see \cite{komodakis2014playing} for a recent review).
The proposed methodology can however be adapted to any other proximal splitting method and more generally to any algorithm whose iteration operator is weakly differentiable.

Numerical simulations involving multi-parameter selection for image restoration and matrix completion problems are reported in Section~\ref{sec:numeric}. The proofs of our results are collected in the appendix.

%\subsection{Proximal splitting algorithms}

\section{Overview on Risk Estimation}
\label{sec:risk_estimation}

This section gives an overview of the literature to estimate the risk via the SURE and its variants for ill-posed inverse problems contaminated by additive white Gaussian noise.

\subsection{Stein Unbiased Risk Estimator}
\label{sec:sure}

%Risk estimation provides a strategy for parameter selection that depends solely on $y$, without prior knowledge of $\mu_0$. This proves very useful as a basis for automatic ways to choose the set of parameters $\theta^\star$ of the reconstruction algorithm.

Degrees of freedom (DOF) is often used to quantify
the complexity of a statistical modeling procedure,
%The DOF plays an important role in model validation and model selection criteria,
see for instance, GCV (generalized cross-validation \cite{golub1979generalized}).
From \cite{efron1986biased,vaiter2012local},
the degrees of freedom of a function
$y \mapsto \mu(y, \theta)$ relatively to a matrix $A \in R^{M \times P}$ is given by
\begin{equation}\label{eq:dof}
  \ADOF{\mu}( \mu_0, \theta ) = \sum_{i=1}^P \frac{\cov(A Y_i, (A \mu(Y, \theta))_i)}{\sigma^2} ~,
\end{equation}
such that $\ADOF{\mu}( \mu_0, \theta )$ is maximal when $A \mu(Y, \theta)$
is highly correlated with the random vector $A Y$.
%In some sense, it measures
%the sensitivity of $y \mapsto A \mu(y, \theta)$ to small perturbations
%of $A y$.
Taking $A = \Id$, leads to the standard
definition of the DOF defined in the seminal work of Efron \cite{efron1986biased}.
But other choices of $A$ allow to counterbalance the undesirable effect of the
linear operator $\Phi$ (recall that $\mu_0 = \Phi x_0$).
For instance, setting $A = (\Phi^* \Phi)^{-1} \Phi^*$ when $\Phi$ has full-rank,
or $A = \Phi^*(\Phi \Phi^*)^+$ when $\Phi$ is rank deficient\footnote{$(\cdot)^+$ stands for the Moore-Penrose pseudo-inverse.},
provides a measure of the DOF relatively to the least-squares estimate of $x_0$, i.e.~$x_{\mathrm{LS}}(y) = A y$ \cite{eldar-gsure,pesquet-deconv,vaiter2012local}.

%Intuitively, one expects to choose a set of parameter $\theta^\star$ ensuring
%that the DOF is not too large while
%$A \mu(Y, \theta)$ is expected being not too far from the random vector $A Y$.
%This trade-off is actually achieved
%by minimizing the quadratic risk $\ARISK{\mu}(\mu_0, \theta) = \EE_W \norm{A(\mu(Y, \theta) - \mu_0)}^2$,
%as dictated by (generalized) Mallows' $C_p$ statistic
%\begin{align*}
%  C^A_p\{x\}(y, \theta) =
%  \norm{A (\mu(y, \theta) - y)}^2
%  - \sigma^2 \tr( A^*\!A )
%  + 2 \sigma^2 \ADOF{\mu}( \mu_0, \theta )
%\end{align*}
%whose expectation is $\EE_W[ C^A_p\{x\}(y, \theta) ] = \ARISK{\mu}(\mu_0, \theta)$.
%Since the DOF is usually unknown, Mallows' $C_p$ cannot be used directly.
With the proviso that $y \mapsto \mu(y, \theta)$
is weakly differentiable with essentially bounded weak partial derivatives,
an unbiased estimate of the DOF can be used to
unbiasedly estimate the risk in \eqref{eq:risk_optim}.
This leads to the (generalized) SURE (also known as weighted SURE \cite{ramani2012iterative}) given as
\begin{align}
  \label{eq:sure}
  & \ASURE{\mu}(y, \theta) =
  \norm{A (\mu(y, \theta) - y)}^2
  - \sigma^2 \tr( A^*\!A )
  + 2 \sigma^2 \AEDOF{\mu}( y, \theta ) \\
  \quad \text{with} \quad &
  \AEDOF{\mu}( y, \theta ) =
  \tr \pa{ A \JACs{1}{\mu}(y, \theta) A^* }
  \nonumber
\end{align}
where we recall that $\JACs{1}{\mu}(y, \theta)$ is the weak Jacobian of $\mu(y, \theta)$
with respect to the first argument $y$.
It can be shown that (see e.g.~\cite{stein1981estimation,eldar-gsure,vaiter2012local})
\[
\EE_W [ \AEDOF{\mu}( Y, \theta ) ] = \ADOF{\mu}( \mu_0, \theta ) \qandq \EE_W [ \ASURE{\mu}( Y, \theta ) ] = \ARISK{\mu}( \mu_0, \theta ) ~.
\]
Expression~\eqref{eq:sure}
is general enough to encompass unbiased
estimates of the \emph{prediction} risk $\EE_W \norm{\mu(Y, \theta) - \mu_0 }^2$
%associated to $\mu(y, \theta)$
(i.e.~$A = \Id$),
the \emph{projection} risk $\EE_W \norm{\Pi (x(Y, \theta) - x_0) }^2$,
where $\Pi$ is the orthogonal projector on $\ker(\Phi)^\perp$ (i.e.~$A = \Phi^*(\Phi \Phi^*)^+$),
and the \emph{estimation} risk $\EE_W \norm{x(Y, \theta) - x_0 }^2$ %associated to $x(y, \theta)$
when $\Phi$ has full rank (i.e.~$A = (\Phi^* \Phi)^{-1} \Phi^*$).
This can prove useful when $\Phi$ is rank deficient, since
in this case, the minimizers of the {prediction} risk
can be far away from the minimizers of the {estimation} risk \cite{rice1986choice}.
The projection risk restricts the estimate to the subspace where there is a signal beside noise, and in this sense, is a good
approximation of the estimation risk \cite{eldar-gsure}.

Note that generalization of the $\SUREn$ have been developed for other noise models,
typically within the multivariate canonical exponential family
(see e.g.~\cite{hudson1978nie,hudson1998maximum,eldar-gsure,raphan2011least}).

Applications of SURE emerged for choosing the smoothing
parameters in families of linear estimates \cite{li1985sure}
such as for model selection, ridge regression,
smoothing splines, etc. 
%It has been extensively used in the statistical community as a competitor to other model selection techniques, e.g.~AIC \cite{akaike1973information}, BIC \cite{schwarz1978estimating} and GCV \cite{golub1979generalized}. In some settings, it has been shown that it offers better accuracy than GCV and related non-parametric selection techniques \cite{efron2004estimation}. Compared to GCV, the drawback of SURE is that it requires the knowledge of the noise variance $\sigma^2$.
%In linear regression,
%SURE boils down to Mallows' $C_p$ statistic \cite{mallows1973some},
%which, in some setting, offers better accuracy
%than GCV and related non-parametric selection techniques
%\cite{efron2004estimation}.
%In fact, SURE can be interpreted as a practical way
%to approach $C_p$ in non-linear regression.
After its introduction in the wavelet community with the
SURE-Shrink algorithm \cite{donoho1995adapting},
it has been widely used for various image restoration problems, e.g.~%
with sparse regularizations
\cite{benazza2005building,blu2007surelet,vonesch2008sure,raphan2008optimal,ramani2008montecarlosure,chaux2008nonlinear,pesquet-deconv,cai2009data,luisier2010sure,ramani2012regularization,ramani2012iterative,deledalle2013score} or with non-local filters
\cite{vandeville2009sure,duval2011abv,dds2011nlmsap,vandeville2011non}.

However, a major practical difficulty when using the SURE lies in the numerical computation
of the DOF estimate, i.e.~the quantity $\AEDOF{\mu}(y, \theta)$
for a given realization $y$. We now give a brief overview of some previous works to deal
with this computation.

\subsection{Closed-form SURE}
%\label{sec:mcsure}

The SURE is based on a DOF estimate $\AEDOF{\mu}(Y, \theta)$
that can be sampled from the observation $y \in \RR^P$
by evaluating the Jacobian $\JACs{1}{\mu}(y, \theta) \in \RR^{N \times P}$.
A natural way, to evaluate $\JACs{1}{\mu}(y, \theta)$ would be to
derive its closed-form expression.
This has been studied for some classes of variational problems.

In quadratic regularization (e.g.~ridge regression), where
solutions are of the form $x(y, \theta) = K(\theta) y$, where $K(\theta)$ is known as the hat or influence matrix,
the Jacobian has a closed-form $\JACs{1}{\mu}(y, \theta) = \Phi K(\theta)$.
%In this case, for $A = \Id$, $\EDOF{\mu}(y, \theta) = \DOF{\mu}(\mu_0, \theta) = \tr(K(\theta))$.
In $\ell^1$-synthesis regularization (a.k.a.~the lasso), the
Jacobian matrix depends on the support
(set of non-zero coefficients)
of any lasso solution $x(y, \theta)$.
An estimator of the DOF
can then be retrieved from the number of non-zero entries of this solution
\cite{zou2007degrees,tibshirani2012dof,dossal-dof}.
These results have in turn been extended to more general
sparsity promoting regularizations \cite{yuan2006model,kato2009degrees,solo2010threshold,tibshirani2011solution,tibshirani2012dof,deledalle2012unbiased,vaiter2012local}, and spectral regularizations (e.g.~nuclear norm) \cite{deledalle2012risk,Candes12}. %for which the DOF estimator depends again on the support of a solution.

This approach however has three major bottlenecks.
First, deriving the closed-form expression of the Jacobian is in general challenging and has to be addressed on a case by case basis.
Second, in large-dimensional problems, evaluating numerically this Jacobian is barely possible.
Even if it were possible, it might be subject to serious numerical instabilities. 
Indeed, solutions of variational problems are achieved via iterative schemes providing iterates
$\il{x}(y, \theta)$ that eventually converge to the set of solutions as $\ell \to +\infty$. And yet, for instance,
substituting the support of the true solution by the support of $\il{x}(y, \theta)$, obtained at a prescribed convergence accuracy, 
might be imprecise (all the more since the problem is ill-conditioned).

The three next sections review previous work to address one or some of these three points.

\subsection{Monte-Carlo SURE}
\label{sec:mcsure}

To deal with the large dimension of the Jacobian,
the standard approach is to exploit the fact that the DOF
only depends on the trace of $A \JACs{1}{\mu}(y, \theta) A^*$.
In denoising applications where $A = \Id$ and $\Phi = \Id$,
this trace can generally be obtained by closed-form computations
of the $P$ diagonal elements of $\JACs{1}{\mu}(y, \theta)$
(see e.g.~\cite{donoho1995adapting,vandeville2009sure}).
This can also be done for some particular inverse problems.
For instance, the authors of \cite{pesquet-deconv} provide an
expression of this trace for the wavelet-vaguelette estimator
when $\Phi$ is a convolution matrix and $A = \Phi^+$.
However, in more general settings,
the complexity of the closed-form computation of the trace is non-linear,
typically the number of operations is in $O(P \times P)$
(think of $\Phi$ a mixing operator or $\mu$ an iterative estimator). To avoid such a costly
procedure, the authors of \cite{girard1989fast,ramani2008montecarlosure}
suggest making use of the following trace equality
\begin{align}
  \AEDOF{\mu}(y, \theta) =
  \tr \pa{ A \JACs{1}{\mu}(y, \theta) A^* }
  =
  \EE_\Delta
  \dotp{\JACs{1}{\mu}(y, \theta)[\Delta]}{ A^*\!A \Delta}
\end{align}
where $\Delta \sim \Nn(0,\Id_P)$ and
$\JACs{1}{\mu}(y, \theta)[\delta] \in \RR^P$ denotes the directional derivative of
$y \mapsto \mu(y, \theta)$ at $y$ in direction $\delta$.
Remark that $\Delta$ does not necessary have to be Gaussian and higher
precisions can be reached in some specific cases, see for instance
\cite{hutchinson1989stochastic,dong1994stochastic,avron2011randomized,roosta2013improved}.
As shown in \cite{roosta2013improved}, the performance of this trace estimator is governed by the distribution of the singular values of the operator $A \JACs{1}{\mu}(y, \theta) A^*$. More specifically, the slower the decay, the better the performance. While it is difficult to make a general claim, we observed numerically that for the recovery problems we consider, it provide a very accurate estimator of the trace.
%In \cite{ramani2008montecarlosure}, where $\Phi = A = \Id$,
%the authors claim that the matrix $\JACs{1}{\mu}(y, \theta)$ is almost diagonal
%since solutions of \eqref{eq:optim_energy}
%``usually operates locally''.
%In general inverse problems, one may reasonably assume that the rank of $\Phi$ grows with
%$P$. Thus for typical choices of $A$ as advocated above, and provided that $P$ is large enough,
%$\dotp{\JACs{1}{\mu}(y, \theta)[\delta]}{ A^*\!A \delta}$
%is the sum of a large number of (quasi) independent random variables,
%such that it becomes very close
%to $\AEDOF{\mu}(y, \theta)$ where $\delta$ is a single realization of $\Delta$.
%In fact, for a fixed trace of $A \JACs{1}{\mu}(y, \theta) A^*$,
%the lower the decay of its spectrum, the better the estimation will be \cite{roosta2013improved}.
%such that for typical choices of $A$, the matrices
%$\JACs{1}{\mu}(y, \theta)$ and $A^*\!A$ are not so badly ill-conditioned.
%Hence, provided $P$ is large enough,
%the trace of the Jacobian is the sum of a large number of (quasi) independent random variables,
%such that $\dotp{\JACs{1}{\mu}(y, \theta)[\delta]}{ A^*\!A \delta}$ can be very close
%to $\AEDOF{\mu}(y, \theta)$ where $\delta$ is a single realizations of $\Delta$.
Hence, following \cite{ramani2008montecarlosure,vonesch2008sure},
an estimate of $\ASURE{\mu}(y, \theta)$
can be obtained by Monte-Carlo simulations using
\begin{align}
  \label{eq-randomized-df}
  & \ASUREMC{\mu}(y, \theta, \delta) =
  \norm{A(\mu(y, \theta) - y)}^2
  - \sigma^2 \tr( A^*\!A )
  + 2 \sigma^2 \AEDOFMC{\mu}(y, \theta, \delta) \nonumber\\
  \text{with} \quad &
  \AEDOFMC{\mu}(y, \theta, \delta) =
  \dotp{\JACs{1}{\mu}(y, \theta)[\delta]}{ A^*\!A \delta}~.
\end{align}
%\todo{ Ensuite, ce l'on peut dire, c'est que avec la loi forte des GN, la v.a. hat $df^A_MC(y,theta,(delta_i)_i)$ converge vers hat $df^A(y,theta)$ qui est un nombre réel. Je suis d'accord que c'est un peu lourd, mais je ne vois pas comment faire autrement pour que ça soit correct mathématiquement.}
The evaluation of \eqref{eq-randomized-df}
necessitates only computing the $P$ entries
of $\JACs{1}{\mu}(y, \theta)[\delta]$.

%%% Here, we should point out that other estimators of
%%% the trace can be derived. On the one hand, one can
%%% also consider random vectors $\delta$ with non-Gaussian distributions,
%%% e.g.~with Bernouli distributions (see for instance
%%% \cite{dong1994stochastic,hutchinson1989stochastic,avron2011randomized}).
%%% On the other hand, in the same vein as \cite{avron2011randomized},
%%% one can exploit the circularity of the trace
%%% $\tr AB = \tr BA$, for any $A \in \RR^{m \times n}$ and $B \in \RR^{n \times m}$,
%%% such that sampling the equivalent relation $\EE_{\delta \in \RR^n} \dotp{A B \delta}{\delta} = \EE_{\delta \in \RR^m} \dotp{BA \delta}{\delta}$
%%% shows there are as many estimators as possible matrix factorizations.
%%% We have decided not pursuing in this direction, leaving it for future work
%%% and we invite the reader to refer to \cite{avron2011randomized}
%%% to get more insight on this point.

It remains to find a stable and efficient way to evaluate
for any vector $\delta \in \RR^P$ the directional derivative
$\JACs{1}{\mu}(y, \theta)[\delta] \in \RR^P$.

\subsection{Iterative Differentiation for Monte-Carlo SURE}
\label{sec:formal_mcsure}

When considering solutions $x(y, \theta)$ of a variational
problem, the DOF cannot be robustly estimated if one knows only
the iterates $\il{\mu}(y, \theta)$ that eventually converge to some $\mu(y, \theta)$ as $\ell \to +\infty$.
%provided after $\ell$ iterations of a numerical scheme.
%Since only $\il{\mu}(y, \theta)$ is available,
It appears then natural to estimate the DOF of $\il{\mu}(Y, \theta)$ directly and make the assumption that it will converge to that of $\mu(y, \theta)$.
%rather than the one associated to the unreachable solutions.
For a realization $y \in \RR^P$, one can sample
an estimate of the DOF of the iterate $\il{\mu}(Y, \theta)$
by evaluating its directional derivative
$\JACs{1}{\il{\mu}}(y, \theta)[\delta]$.
A practical way, initiated by \cite{vonesch2008sure},
to compute this quantity,
consists in recursively differentiating the sequence of iterates.
The authors of \cite{vonesch2008sure} have derived the closed-form expression
of the directional derivative for the Forward-Backward (FB) algorithm.
%%%The FB scheme is a good candidate to solve \eqref{eq:optim_energy} when $E(x, y, \theta) = \norm{y - \mu}^2 + J(x, \theta)$ and $x \mapsto J(x, \theta)$ is simple, meaning that its proximity operator has a closed-form. Recall that the proximity operator of a lsc proper convex function $G$ on $\RR^{N}$ is
%%%\begin{align*}%\label{eq:prox}
%%%  \Prox_{G}(x) = \uargmin{z \in \RR^{N}} \frac{1}{2}\norm{x-z}_F^2 + G(z).
%%%\end{align*}
%%%The FB algorithm iteration reads
%%%\begin{align}\label{eq:fb}
%%%  \ill{\mu} \!=\! \Prox_{\tau J}(\il{\mu} + \tau \Phi^*(y - \Phi \il{\mu}), \theta)
%%%\end{align}
%%%where $\tau>0$ is chosen such that $\tau \norm{\Phi^* \Phi} < 2$
%%%and $\il{\mu}$ denotes the solution $x(y, \theta)$ obtained
%%%after $\ell$ iterations.
%%%Note that for the sake of clarity,
%%%we have dropped here the dependency of $\il{\mu}$
%%%with $y$ and $\theta$.
%%%
%%%The applied Jacobian vector is then obtained by differentiating formula \eqref{eq:fb}, which allows, for any vector $\delta \in \RR^P$, to compute iteratively $\il{\Dd_x} = \JACs{1}{\il{\mu}}(y, \theta)[\delta]$ as
%%%\begin{align*}
%%%  & \ill{\Dd_x} =
%%%  J\{\Prox_{\tau \la J}\}(\il{\mu})[\il{\Dd_X}]\\
%%%  \text{where} \quad
%%%  &
%%%  \il{\mu} = \il{\mu} + \tau \Phi^*(y - \Phi \il{\mu})\\
%%%  \text{and} \quad
%%%  &
%%%  \il{\Dd_X} =
%%%  \il{\Dd_x} + \tau \Phi^*(\delta - \Phi \il{\Dd_x}).
%%%\end{align*}
The directional derivative at iteration $\ell+1$,
denoting by $\ill{\Dd_\mu} = \JACs{1}{\ill{\mu}}(y, \theta)[\delta]$,
is obtained iteratively
as a function of $\il{\mu}(y, \theta)$ and
$\il{\Dd_\mu} = \JACs{1}{\il{\mu}}(y, \theta)[\delta]$.
The Monte-Carlo DOF and the Monte-Carlo SURE can in turn be iteratively estimated by plugging
$\JACs{1}{\il{\mu}}(y, \theta)[\delta]$ in \eqref{eq-randomized-df}
leading to
\begin{align}\label{aq:mc_sure}
  & \ASUREMC{\il{\mu}}(y, \theta, \delta) =
  \norm{A(\il{\mu}(y, \theta) - y)}^2
  - \sigma^2 \tr( A^*\!A )
  + 2 \sigma^2 \AEDOFMC{\il{\mu}}(y, \theta, \delta) \nonumber\\
  \text{with} \quad &
  \AEDOFMC{\il{\mu}}(y, \theta, \delta) =
  \dotp{\il{\Dd_\mu}}{ A^*\!A \delta}
  ~.
\end{align}
A similar approach is described in \cite{giryes-proj-gsure}.
Pursuing this idea, the authors of \cite{ramani2012iterative,ramani2012regularization} have recently
provided such closed-form expressions in the case of the split Bregman
method. Concurrently, in an early short version of this paper
\cite{deledalle2012proximal},
we have also considered this approach for general proximal
splitting algorithms, an approach that we extend in Section~\ref{sec:formal_diff}.
%We will see in Section \ref{}, that all these methods can
%be generalized with a single formal expression for any (iterative)
%weakly differentiable function $y \mapsto x(y, \theta)$.

\subsection{Finite-Difference SURE}
\label{sec:asure}

An alternative initiated in \cite{ye1998measuring,shen2002adaptive} and rediscovered in \cite{ramani2008montecarlosure}
consists in estimating  $\tr \left(A \JACs{1}{\mu}(y, \theta) A^*\right)$ via finite differences given, for $\epsilon >0$, by
%using a computation of
%$\tr \left(A \JACs{1}{\mu}(y, \theta) A^*\right)$ that does not require
%any closed-form differentiation but only the computation of finite differences given,
%for $\epsilon >0$, by
\begin{align}
  \tr \left(A \JACs{1}{\mu}(y, \theta) A^*\right)
  \approx
  \sum_{i=1}^P
  \frac{[A^*A (\mu(y + \epsilon e_i, \theta) - \mu(y, \theta))]_i}{\epsilon} ~,
\end{align}
where $(e_i)_{1 \leq i \leq P}$ is the canonical basis of $\RR^P$.
Plugging this expression in~\eqref{eq-randomized-df}
yields the Finite-Difference (FD) $\ASUREn$
given by
\begin{align}\label{aq:a_sure}
  &\ASUREA{\mu}(y, \theta, \epsilon) =
  \norm{A(\mu(y, \theta) - y)}^2
  - \sigma^2 \tr( A^*\!A )
  + 2 \sigma^2 \AEDOFA{\mu}(y, \theta, \epsilon) \nonumber\\
  \quad \text{with} \quad &
  \AEDOFA{\mu}(y, \theta, \epsilon) =
  \frac{1}{\epsilon}
  \sum_{i=1}^P (A^*\!A (\mu(y + \epsilon e_i, \theta) - \mu(y, \theta)))_i
  .
\end{align}
The main advantage of this method is that $(y, \theta) \mapsto \mu(y, \theta)$
can be used as a black-box, i.e., without knowledge on
the underlying algorithm that provides $\mu(y, \theta)$,
while, for $\epsilon$ small enough,
it performs as well as the approach described in
Section \ref{sec:formal_mcsure}
that requires the knowledge of the derivatives in closed-form.
In fact, if $y \mapsto \mu(y, \theta)$ is Lipschitz-continuous, then it is differentiable Lebesgue a.e. (Rademacher's theorem), and its derivative equals its weak derivative Lebesgue a.e.~\cite[Theorem~1-2, Section~6.2]{EvansGariepy92}, which in turn implies
\begin{align}\label{eq:limsurefd}
  \lim_{\epsilon \to 0}
  \ASUREA{\mu}(y, \theta, \epsilon) =
  \ASURE{\mu}(y, \theta) \quad \text{Lebesgue a.e.}
\end{align}
The value of $\epsilon$ can so be chosen as small as possible
as soon as it does not rise to numerical instabilities due to limited machine precision.
To avoid numerical instabilities, $\epsilon$ should be chosen in a reasonable
range of values with respect to the amplitudes of the data.
In practice, we observe that precised results can be reached
compared to the closed-form derivation,
hence yielding to a quasi unbiased risk estimator
(i.e., with a negligible bias).
It remains that when the data dimension
$P$ is large, the evaluation of $P$ finite differences
along each axis might be numerically intractable.
In that case, the Monte-Carlo approach (see Section \ref{sec:mcsure})
can also be used in conjunction with finite differences leading to
the Finite-Difference Monte-Carlo (FDMC) $\ASUREn$
given by
\begin{align}\label{aq:amc_sure}
  &\ASUREAMC{\mu}(y, \theta, \delta, \epsilon) =
  \norm{A(\mu(y, \theta) - y)}^2
  - \sigma^2 \tr( A^*\!A )
  + 2 \sigma^2 \AEDOFAMC{\mu}(y, \theta, \delta, \epsilon) \nonumber\\
  \text{with} \quad &
  \AEDOFAMC{\mu}(y, \theta, \delta, \epsilon) =
  \frac{1}{\epsilon}
  \dotp{\mu(y + \epsilon \delta, \theta) - \mu(y, \theta)}{ A^*\!A \delta}
  .
\end{align}

The originality of our approach described in the next section is to devise a grounded choice
of $\epsilon > 0$. This introduces a bias in the estimation of the risk.
Nevertheless, as we will see, using $\epsilon > 0$ plays an important role
in risk optimization since, unlike $\ASURE{\mu}$,
$\ASUREA{\mu}$ is a smooth function of $\theta$ in the weak sense.
This is the key point to optimize the risk. By choosing $\epsilon > 0$ carefully,
a smoother objective function can be used as a basis to perform
a quasi-Newton-like optimization at the expense of a controlled bias.
%Moreover, in some cases, when the dimension $P$ increases, we can prove that
%the value $\epsilon$ can be decreased properly in oder to
%reduce the bias while controling the variance,
%until the bias and the variance both vanishes when $P$ tends to $\infty$.
%To this end, we first need to introduce asymptotical results about
%such SURE-like estimatores, this is the purpose of the next section.

\section{Risk Estimate Minimization}
\label{sec:risk_optimization}

In this section, we investigate how risk estimates can be used for
optimizing a collection of continuous parameters.
%Recall that if the risk in \eqref{eq:risk_optim} were available,
%we would ideally calibrate the
%parameters $\theta$ by performing a gradient descent driven
%by the gradient of the risk given in \eqref{eq:risk_grad}.
%By assuming $W \sim \Nn(0,\si^2\Id_P)$,
%$\theta \mapsto \ARISK{\mu}(\mu_0, \theta) = \EE_W \norm{A \mu(Y, \theta) - A \mu_0}^2$
%is differentiable everywhere. Hence
%If we could evaluate this expectation with the knowledge of $\mu_0$,
%we would ideally perform a quasi-Newton method to optimize the
%parameters $\theta$.

\subsection{Stein's Unbiased GrAdient Risk (SUGAR) Estimator}

%In practice the risk is not available. We then focus on the minimization of its estimate.
The difficulty is that even if $\theta \mapsto \ARISK{\mu}(\mu_0, \theta)$ is
differentiable in the weak sense, the function $\theta \mapsto \ASURE{\mu}(y, \theta)$
might contain discontinuities.
Typically, $\AEDOF{\mu}(y, \theta)$ has discontinuities where
$(y, \theta) \mapsto \mu(y, \theta)$ is not differentiable.

We start with a simple result showing that unlike $\ASURE{\mu}(y, \theta)$,
the finite-difference based mapping
$\theta \mapsto \ASUREA{\mu}(y, \theta, \epsilon)$, for $\epsilon > 0$,
is weakly differentiable.

\begin{prop}~\label{prop:sugar}
  Assume $\mu(y, \theta)$ is weakly differentiable with respect to $y$ and $\theta$.
  Given $\epsilon > 0$,
  $\AEDOFA{\mu}(y, \theta, \epsilon)$
  and
  $\ASUREA{\mu}(y, \theta, \epsilon)$
  are also weakly differentiable with respect to $y$ and $\theta$,
  and their (weak) gradients with respect to $\theta$ are given,
  for almost all $\theta \in \Theta$, as
  \begin{align*}
    \begin{array}{rlll}
    &\ASUGARA{\mu}(y, \theta, \epsilon) &=& \GRAD{2}{\ASUREA{\mu}}(y, \theta, \epsilon)\\
    &&=&
    2 \JACs{2}{\mu}(y, \theta)^* A^*\!A (\mu(y, \theta) - y)%\\
    %&&& \hspace{2cm}
    + 2 \sigma^2
    \GRAD{2}{\AEDOFA{\mu}}(y, \theta, \epsilon)\\
    \qwhereq &
    \GRAD{2}{\AEDOFA{\mu}}(y, \theta, \epsilon) &=&
    \displaystyle
    \frac{1}{\epsilon}
    \sum_{i=1}^P
    \left(\JACs{2}{\mu}(y + \epsilon e_i, \theta) - \JACs{2}{\mu}(y, \theta)\right)^* \!A^*\!A e_i
    ~.
    \end{array}
  \end{align*}
  %%%Moreover, given $\delta \in \RR^P$, the functions
  %%%$\theta \mapsto \AEDOFAMC{\mu}(y, \theta, \delta, \epsilon)$
  %%%and
  %%%$\theta \mapsto \ASUREAMC{\mu}(y, \theta, \delta, \epsilon)$
  %%%are also weakly differentiable and their (weak) gradients are given,
  %%%for almost all $\theta \in \Theta$, as
  %%%\begin{align*}
  %%%  \begin{array}{rlll}
  %%%  &\ASUGARAMC{\mu}(y, \theta, \delta, \epsilon) &=& \GRAD{2}{\ASUREAMC{\mu}}(y, \theta, \delta, \epsilon)\\
  %%%  &&=&
  %%%  2 \JACs{2}{\mu}(y, \theta)^* \Phi^* A^*\!A (\mu(y, \theta) - y)\\
  %%%  &&& \hspace{2cm}
  %%%  + 2 \sigma^2
  %%%  \GRAD{2}{\AEDOFAMC{\mu}}(y, \theta, \delta, \epsilon)\\
  %%%  \qwhereq &
  %%%  \GRAD{2}{\AEDOFAMC{\mu}}(y, \theta, \delta, \epsilon) &=&
  %%%  \displaystyle
  %%%  \frac{1}{\epsilon}
  %%%  \left(\JACs{2}{\mu}(y + \epsilon \delta, \theta) - \JACs{2}{\mu}(y, \theta)\right)^* \!A^*\!A \delta ~.
  %%%  \end{array}
  %%%\end{align*}
\end{prop}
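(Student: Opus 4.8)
The plan is to regard both $\AEDOFA{\mu}(y,\theta,\epsilon)$ and $\ASUREA{\mu}(y,\theta,\epsilon)$ as functions of the joint variable $(y,\theta)$ on $\RR^P\times\Theta$, to deduce their joint weak differentiability from that of $\mu$, and then to read off the weak gradient with respect to $\theta$, which is well defined Lebesgue-a.e.; joint weak differentiability immediately gives weak differentiability in each group of variables, so the statement for $y$ follows by the same argument. Since $\ASUREA{\mu}$ differs from $2\sigma^2\AEDOFA{\mu}$ only by the $\theta$-constant term $-\sigma^2\tr(A^*\!A)$, which contributes nothing to $\GRAD{2}{}$, and by the data-fit term $\norm{A(\mu(y,\theta)-y)}^2$, it suffices to treat the DOF term and the data-fit term separately and add their $\theta$-gradients.

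For the DOF term the key observation is that it is \emph{linear} in the values of $\mu$. Writing $T_i(y,\theta)=(y+\epsilon e_i,\theta)$ for the translation by $\epsilon e_i$ in the first variable, one has $\AEDOFA{\mu}=\frac{1}{\epsilon}\sum_{i=1}^P \dotp{e_i}{A^*\!A(\mu\circ T_i-\mu)}$. By translation invariance of weak differentiation (change of variables in the defining integral against $\varphi\in C_c^1$), $\mu\circ T_i$ is weakly differentiable with weak $\theta$-Jacobian $\JACs{2}{\mu}(y+\epsilon e_i,\theta)$. A finite linear combination of weakly differentiable functions is weakly differentiable, and the weak gradient commutes with the combination, so one reads off $\GRAD{2}{\AEDOFA{\mu}}=\frac{1}{\epsilon}\sum_{i=1}^P\left(\JACs{2}{\mu}(y+\epsilon e_i,\theta)-\JACs{2}{\mu}(y,\theta)\right)^*A^*\!A\,e_i$, exactly as claimed. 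This part is essentially bookkeeping.

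The data-fit term is where the only genuine work lies, since $\norm{A(\mu(y,\theta)-y)}^2=\sum_{j}v_j(y,\theta)^2$ with $v_j(y,\theta)=[A\mu(y,\theta)]_j-[Ay]_j$ is a \emph{nonlinear} (quadratic) function of $\mu$. Each $v_j$ is weakly differentiable with $\theta$-gradient $\GRAD{2}{v_j}=\JACs{2}{\mu}(y,\theta)^*A^*e_j$, the term $-[Ay]_j$ being $\theta$-independent. I would then differentiate $v_j^2$ by invoking the chain rule for Sobolev functions composed with the $C^1$ map $t\mapsto t^2$ \cite{EvansGariepy92}, giving $\GRAD{2}{(v_j^2)}=2v_j\,\JACs{2}{\mu}(y,\theta)^*A^*e_j$. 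Summing over $j$ collapses to $2\JACs{2}{\mu}(y,\theta)^*A^*\!A(\mu(y,\theta)-y)$, and adding $2\sigma^2\GRAD{2}{\AEDOFA{\mu}}$ yields $\ASUGARA{\mu}$ as stated.

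The hard part is precisely the local-integrability hypothesis behind that chain rule: the product/chain rule for weak derivatives is not automatic for functions merely in $W^{1,1}_{\mathrm{loc}}$, since a product of two $L^1_{\mathrm{loc}}$ functions need not be $L^1_{\mathrm{loc}}$, so one must verify that $v_j\,\GRAD{2}{v_j}$ is locally integrable. I would discharge this by noting that $v_j\in W^{1,2}_{\mathrm{loc}}$, whence $v_j\,\GRAD{2}{v_j}\in L^1_{\mathrm{loc}}$ by Cauchy--Schwarz; this regularity is guaranteed under the standing assumption already required for the SURE to be valid, namely that $\mu$ has essentially bounded weak partial derivatives and is therefore locally Lipschitz \cite[Section~4.2.3]{EvansGariepy92}, hence locally bounded together with its weak gradient. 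With this in hand the chain rule applies verbatim and the computation above is rigorous.
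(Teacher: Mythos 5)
Your proof follows essentially the same route as the paper's: linearity of the weak derivative handles the finite-difference DOF term, and the chain rule for weak derivatives composed with $t \mapsto t^2$, applied componentwise, handles the data-fit term. Your extra care in verifying local integrability of $v_j\,\GRAD{2}{v_j}$ is a welcome refinement (the paper simply invokes the chain rule of \cite[Theorem~4(ii), Section~4.2.2]{EvansGariepy92}, asserting somewhat loosely that $(\cdot)^2$ has bounded derivative), but it does not alter the substance or structure of the argument.
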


Thanks to Proposition \ref{prop:sugar}, a quasi-Newton-like method %
can now be used to optimize $\ASUREA{\mu}(y, \theta, \epsilon)$ for the vector of continuous parameters $\theta$ by implementing the iteration
\begin{align*}
  \theta_{n+1}
  &=
  \theta_n
  -
  B_n
  \ASUGARA{\mu}(y ,\theta_n, \epsilon)
\end{align*}
where $B_n \in \RR^{\dim(\Theta) \times \dim(\Theta)}$ is a sequence of definite-positive matrices.
Typically, if $\theta \mapsto \ASUREA{\mu}(y ,\theta, \delta, \epsilon)$
behaves locally as a $C^2$ function,
$B_n$ should approach the inverse of the corresponding Hessian at $\theta_n$.
Remark that in general there is no guarantee that the risk has a unique global minimizer,
though in the 1-D case it is generally the case.
When several parameters are involved, such objective can have several local minima and
specific quasi-Newton-like methods might be developed to avoid being stuck in one of them.

In practice, the calculation of $\ASUGARAn$ depends on the computation of the
Jacobian matrices with respect to the parameters $\theta$.
We will see in Section \ref{sec:formal_diff} how this quantity
can be efficiently computed when $\mu$ results from
an iterative algorithm.
\\

%We now turn to the asymptotic behavior of SUGAR for large $P$, at a
%a single realization $Y$, i.e., our observation. 
%To this end, we first have to define how
%our linear inverse problem evolves with the dimension $P$.
%Given $x_0 \in \RR^N$, we consider the sequence
%$\{\Phi_P\}_{P \geq 1}$ where $\Phi_P \in \RR^{P \times N}$ and such that, for all $P>1$,
%$\Phi_{P}$ is the sub-matrix obtained by cutting down one line of $\Phi_{P+1}$.
%We can then define a sequence of inverse problems as a the sequence
%of random vector $\{Y_P\}_{P \geq 1}$ defined as
%\begin{align}
%  Y_P = (\Phi_P) x_0 + W_P \quad W_P \sim \Nn(0, \sigma^2 \Id_P).
%\end{align}
%We moreover assume that for all $P$ the elements $(\Phi_P)_{i,j}$
%for all $i \leq P$, $j \leq N$, are independent of the current dimension $P$.
%We also define the sequence $\{(\mu_0)_P\}_{P \geq 1}$ where $(\mu_0)_P = (\Phi_P) x_0$ and
%the sequence $\{A_P\}_{P \geq 1}$ where $A_P$ is either $\Id_P$, or
%$\Phi^*_P(\Phi_P \Phi^*_P)^+$ or $(\Phi_P^* \Phi_P^*)^{-1}\Phi_P^*$ depending
%of the rank of $\Phi_P$ (see Section \ref{sec:sure}).
%In particular, the elements of $(\mu_P)_i$, for all $i \leq P$, are independent
%of $P$ such that $\norm{\mu_P}_\rho = o(P)$ for $\rho > 1$.
%In the following, for the sake of notation simplicity,
%we will drop the dependence of $\Phi_P$, $Y_P$, $W_P$, $(\mu_0)_P$ and $A_P$
%with $P$.

%\input{sections/thm1_simple}

% \bigskip
% \noindent
% {\textbf SECOND VERSION (2.5 pages)}\\

We now turn to the asymptotic unbiasedness of the proposed gradient estimator of the risk as $\epsilon$ approaches $0$. Toward this goal we need the following assumptions.
\begin{enumerate}[label={\rm\bf (A.\arabic{*})}, ref={\rm\bf (A.\arabic{*})}]
\item The mapping $y \mapsto \mu(y, \theta)$ is uniformly Lipschitz continuous with Lipschitz constant $L_1$.\label{assump:L1}
\item The mapping $y \mapsto \mu(y,\theta)$ is such that $\mu(0,\theta)=0$ for any $\theta$.\label{assump:mu}
\item The mapping $\theta \mapsto \mu(y, \theta)$ is uniformly Lipschitz continuous with Lipschitz constant $L_2$ independently of $y$.\label{assump:L2}
\end{enumerate}
\bigskip
\begin{rem}[Discussion of the assumptions]
\begin{enumerate}
\item Assumption~\ref{assump:L1} is mild and is fulfilled in many situations of interest. In particular, this is the case when $y \mapsto \mu(y, \theta)$ is the proximal operator of proper closed and convex function, as considered in Section~\ref{sec:formal_diff} (see also Section~\ref{sec:stats_fd} for soft-thresholding). Standard convex analysis arguments \cite{hiriart1996convex} show that the proximity operator is indeed a uniformly Lipschitz of its argument $y$ with constant $L_1=1$, independently of $\theta$.

\item Assumption~\ref{assump:mu} is very natural and does not entail any loss of generality. It basically states that, when the observations are zero, so is the estimator.

\item As far as Assumption~\ref{assump:L2} is concerned, it is verified under certain circumstances. This is for instance the case when $\mu(y,\theta)=\Prox_{\theta G}(y)$, $\theta > 0$, where $G$ is the
%\todo{Is gauge really needed (i.e. homogeneity) ? If so, maybe a definition is needed here. }
gauge (see Definition \ref{defn:convex-gauge} in Appendix \ref{sec:reggauge}) of any compact convex set containing the origin as an interior point\footnote{Another case which is trivial corresponds to $G$ being the indicator function of a non-empty closed convex set, in which case $L_2=0$.}; see Proposition~\ref{prop:lipproxgauge} in Appendix~\ref{sec:reggauge}. By induction, this also holds when $\mu(y,\theta)=\Prox_{\theta_1 G_1} \circ \cdots \circ\Prox_{\theta_m G_m}(y)$, $\theta \in ]0,+\infty[^m$, and for any $i=1,\ldots,m$, $G_i$ is the gauge of any compact convex set containing the origin as an interior point, see Corollary~\ref{cor:lipcompproxgauge}. Typical instances of these gauges are norms, e.g. $\ell_1$, $\ell_1-\ell_2$ or nuclear norms very popular now in the signal and image processing community.

\item Assumption~\ref{assump:L2} can be relaxed to cover the case where $L_2$ depends on $y$. In such a situation, additional assumptions on the function $y \mapsto L_2(y)$ are needed for steps 3) and 4) in the proof of Theorem~\ref{thm:sugar} to go through. We omit this case for the sake of clarity and to avoid further technicalities.

%\item Assumption~\ref{assump:enermu} is very natural and basically says that the mean energy of the underlying vector $\mu_0$ is constant.
\end{enumerate}
\end{rem}
{~}\\
We are now ready to state our theorem. 
\begin{thm}[Asymptotic unbiasedness of SUGAR]~\label{thm:sugar}
  Assume that \ref{assump:L1}-\ref{assump:L2} hold.
  Then, $\ARISK{\mu}(\mu_0, \cdot)$ and $\ADOF{\mu}(\mu_0, \cdot)$ are weakly differentiable, and for any Lebesgue point $\theta$, 
  %the gradient of
  %$\ASUREA{\mu}(y, \cdot, \epsilon)$
  %and that of
  %$\AEDOFA{\mu}(y, \cdot, \epsilon)$
  %are such that
  \begin{align*}
    %\lim_{P \to \infty} & \EE_W \left[ \frac{1}{P} \ASUREA{\mu}(Y ,\theta, \epsilon) \right]
    %&&=
    %\frac{1}{P} \ASUREA{\mu}(y, \theta, \epsilon)\\
    \lim_{\epsilon \to 0}\EE_W \left[\ASUGARA{\mu}(Y ,\theta, \epsilon)\right]
    &=\GRAD{2}{{\ARISK{\mu}}}(\mu_0, \theta)
    \\
    \qandq
    \lim_{\epsilon \to 0}
    \EE_W \left[\GRAD{2}{\AEDOFA{\mu}}(Y, \theta, \epsilon)\right]
    &=\GRAD{2}{\ADOF{\mu}}(\mu_0, \theta)~.
  \end{align*}
\end{thm}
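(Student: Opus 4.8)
The plan is to reduce the risk identity to the degrees-of-freedom identity, and to establish the latter by a Gaussian change of variables followed by dominated convergence; the Lipschitz assumptions \ref{assump:L1} and \ref{assump:L2} (together with \ref{assump:mu}) are exactly what supplies the integrable dominating functions throughout. \emph{Step 1 (differentiation under the expectation).} I would first show that $\GRAD{2}{}$ commutes with $\EE_W$, which simultaneously yields weak differentiability of $\ARISK{\mu}(\mu_0,\cdot)$ and $\ADOF{\mu}(\mu_0,\cdot)$ and closed forms for their gradients. Since $\theta \mapsto \mu(y,\theta)$ is Lipschitz by \ref{assump:L2} it is weakly differentiable with $\norm{\JACs{2}{\mu}(y,\theta)} \leq L_2$ a.e., while \ref{assump:L1}--\ref{assump:mu} give the linear growth $\norm{\mu(y,\theta)} = \norm{\mu(y,\theta)-\mu(0,\theta)} \leq L_1\norm{y}$. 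Testing the integrands against $\varphi \in C_c^1(\Theta)$ and applying Fubini (justified by these bounds, which make the $\theta$-derivatives $\EE_W$-integrable) moves $\GRAD{2}{}$ inside the expectation; writing $\ADOF{\mu}(\mu_0,\theta) = \frac{1}{\sigma^2}\EE_W\dotp{W}{A^*A\,\mu(Y,\theta)}$ from the covariance definition \eqref{eq:dof}, this gives
\[
  \GRAD{2}{\ARISK{\mu}}(\mu_0,\theta) = \EE_W\left[2\JACs{2}{\mu}(Y,\theta)^* A^*A\,(\mu(Y,\theta)-\mu_0)\right] \qandq \GRAD{2}{\ADOF{\mu}}(\mu_0,\theta) = \frac{1}{\sigma^2}\EE_W\left[\JACs{2}{\mu}(Y,\theta)^* A^*A\,W\right] .
\]

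\emph{Step 2 (reduction of the risk to the DOF).} Splitting $\mu(Y,\theta)-\mu_0 = (\mu(Y,\theta)-Y)+W$ in the first identity decomposes $\GRAD{2}{\ARISK{\mu}}$ into the fidelity term $\EE_W[2\JACs{2}{\mu}(Y,\theta)^* A^*A\,(\mu(Y,\theta)-Y)]$ and $2\sigma^2\,\GRAD{2}{\ADOF{\mu}}(\mu_0,\theta)$. Because the fidelity part of $\ASUGARA{\mu}(y,\theta,\epsilon)$ is independent of $\epsilon$ and coincides verbatim with the first term, the whole theorem reduces to establishing $\lim_{\epsilon\to0}\EE_W[\GRAD{2}{\AEDOFA{\mu}}(Y,\theta,\epsilon)] = \GRAD{2}{\ADOF{\mu}}(\mu_0,\theta)$.

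\emph{Steps 3--4 (change of variables and dominated convergence).} For each $i$, translating the Gaussian in $\EE_W[\JACs{2}{\mu}(Y+\epsilon e_i,\theta)^* A^*A\,e_i]$ back to $Y$ produces the density ratio $r_\epsilon^i(W) = \exp(\epsilon W_i/\sigma^2 - \epsilon^2/(2\sigma^2))$, so that $\EE_W[\GRAD{2}{\AEDOFA{\mu}}(Y,\theta,\epsilon)] = \sum_{i=1}^P \EE_W[\JACs{2}{\mu}(Y,\theta)^* A^*A\,e_i\, g_\epsilon(W_i)]$ with $g_\epsilon(w) = (r_\epsilon^i(w)-1)/\epsilon$. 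This moves the finite difference off $\mu$ (no second-order derivative of $\mu$ is ever required) and onto the smooth density. As $\epsilon\to0$ one has $g_\epsilon(w)\to w/\sigma^2$ pointwise, and a mean-value estimate bounds $|g_\epsilon(w)|$ for $\epsilon\in(0,\epsilon_0]$ by a factor of the form $C(1+|w|)\exp(\epsilon_0|w|/\sigma^2)$; combined with $\norm{\JACs{2}{\mu}}\leq L_2$ this is a Gaussian-integrable dominating function. Dominated convergence then yields $\lim_{\epsilon\to0}\EE_W[\GRAD{2}{\AEDOFA{\mu}}(Y,\theta,\epsilon)] = \frac{1}{\sigma^2}\EE_W[\JACs{2}{\mu}(Y,\theta)^* A^*A\,W] = \GRAD{2}{\ADOF{\mu}}(\mu_0,\theta)$ by Step 1, and Step 2 then delivers the risk statement. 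Since weak gradients are defined only up to Lebesgue-a.e. equivalence, these pointwise identifications are asserted at Lebesgue points of the representative.

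The main obstacle is the domination controlling the $\epsilon\to0$ passage: the density ratio $r_\epsilon^i$ grows exponentially in $W_i$, so one must guarantee that the Gaussian tail dominates it uniformly in $\epsilon$. This is precisely where the uniform boundedness $\norm{\JACs{2}{\mu}}\leq L_2$ from \ref{assump:L2} is indispensable — and, as the preceding remark anticipates, it is exactly this step (and the analogous commutation in Step 1) that must be revisited, with growth hypotheses on $y\mapsto L_2(y)$, once $L_2$ is allowed to depend on $y$.
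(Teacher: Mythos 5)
Your argument is correct, and it reaches the theorem by a genuinely different route than the paper. The paper never writes down closed forms for $\GRAD{2}{\ARISK{\mu}}$ or $\GRAD{2}{\ADOF{\mu}}$: it tests $\ARISK{\mu}(\mu_0,\cdot)$ against $\partial\varphi/\partial\theta_i$ for $\varphi\in C_c^1(\Theta)$, invokes Stein's lemma to replace the risk by $\EE_W[\ASURE{\mu}(Y,\theta)]$, uses the Lebesgue-a.e.\ limit $\ASUREA{\mu}(y,\theta,\epsilon)\to\ASURE{\mu}(y,\theta)$ of \eqref{eq:limsurefd}, and then unwinds a chain of dominated-convergence/Fubini/integration-by-parts steps (S.1)--(S.8) so that weak differentiability of the risk and the identification of $\lim_{\epsilon\to 0}\EE_W[\ASUGARA{\mu}]$ as its weak gradient drop out simultaneously from the definition of the weak derivative. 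You instead (i) differentiate under the expectation first, obtaining explicit representations of both weak gradients, (ii) reduce the risk claim to the DOF claim through the split $\mu(Y,\theta)-\mu_0=(\mu(Y,\theta)-Y)+W$, which plays the role that Stein's lemma plays in (S.1) — so you never need the SURE identity or \eqref{eq:limsurefd} — and (iii) identify $\lim_{\epsilon\to 0}\EE_W[\GRAD{2}{\AEDOFA{\mu}}]$ by shifting the finite difference onto the Gaussian density and applying dominated convergence to $(r^i_\epsilon-1)/\epsilon\to W_i/\sigma^2$. That translation trick is also present in the paper (step (S.7) rewrites $\EE_W[f_\epsilon]$ as a convolution with $(g_\sigma(\cdot+\epsilon e_j)-g_\sigma)/\epsilon$ and bounds it by $\int|g'_{1,\sigma}|$), but there it serves only to produce a uniform integrable bound; you push it further to compute the limit explicitly as $\tfrac{1}{\sigma^2}\EE_W[\JACs{2}{\mu}(Y,\theta)^*A^*AW]$. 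What your route buys is concreteness — explicit gradient formulas and an integrand with a genuine pointwise limit, which makes the final dominated-convergence step unambiguous (recall the paper itself observes that $\lim_{\epsilon\to 0}\ASUGARA{\mu}(y,\theta,\epsilon)$ need not exist pointwise in $y$, so its (S.8) has to be read through the convolution representation anyway). What the paper's route buys is that it recycles the already-established SURE machinery and keeps every manipulation at the level of test functions. Your closing remark is also on target: both the commutation in your Step 1 and the domination in Steps 3--4 lean on the uniform bound $\norm{\JACs{2}{\mu}}\le L_2$, which is exactly the step the authors flag as needing extra growth hypotheses when $L_2$ depends on $y$.
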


Theorem \ref{thm:sugar} can be given the following interpretation.
As $\epsilon$ gets close to $0$, e.g.~a decreasing function of the dimension $P$\footnote{as we will see, the higher the dimension $P$, the smaller $\epsilon$ could be.},
the gradient of $\ASUREA{\mu}(y,\cdot,\epsilon)$ (normalized by $P$) can be used to estimate the gradient of
the risk (also normalized by $P$) provided that $P$ is large enough.
%\JF{Cette phrase n'est pas bonne, car elle contredit l'hypothese de Lipschitz continuite de $\mu(y,\cdot)$ Il faut l'enlever. Even when the DOF is not weakly differentiable everywhere, the gradient of the DOF can still be estimated using finite differences on $\JACs{2}{\mu}(y, \theta)$.}
%The SUGAR based quasi-Newton can then be interpreted as the optimization of the true risk using an estimator of the unknown gradient.

However, even if $\epsilon$ should decrease towards $0$, it should not
decrease too fast. In particular, for a fixed dimension $P$,
the step $\epsilon$ cannot be chosen arbitrarily small.
This would not be an issue if
$\mu(y ,\cdot)$
were differentiable,
but in general, there might be singularities.
In fact, for a finite dimension $P$,
the limit when $\epsilon \to 0$ of the sample
$\ASUGARA{\mu}(y ,\theta, \epsilon)$
may not even exist, though that of its expectation does exist Lebesgue a.e.~as shown in the proof of Theorem~\ref{thm:sugar}.
As a consequence, the quantity $\frac{1}{P}\ASUGARA{\mu}(Y ,\theta, \epsilon)$
can become very unstable when $\epsilon$ decreases too fast with the dimension $P$.
The underlying statistical question is whether one can control the variance of $\frac{1}{P}\ASUGARA{\mu}(Y ,\theta, \epsilon)$
as $P$ increases, and make arbitrarily small or even asymptotically vanishing, so that $\frac{1}{P}\ASUGARA{\mu}(Y ,\theta, \epsilon)$ becomes a consistent estimator.
%If both vanishes to zero, the estimator $\frac{1}{P}\ASUGARA{\mu}(Y ,\theta, \hat{\epsilon}(P))$ is said to be consistent, i.e., is guarantee to converge in probability towards $y \mapsto \frac{1}{P}\GRAD{2}{\ARISK{\mu}}(\mu_0, \theta)$.
Unfortunately, consistency of our gradient estimator of the risk is very intricate to get in the general case, as it is the case for the consistency of the SURE.
However, when $\mu$ specializes to soft-thresholding, such a result can be achieved.

\subsection{SUGAR for Soft-tresholding}
\label{sec:stats_fd}

%\GP{I think it would be better not to introduce the notation $\la$, and only use $\th \in \RR^+$ instead. }

In this section, we show that the proposed gradient estimator of the risk can be consistent
in the case where $\mu$ is the soft-thresholding (ST) function
and $A = \Id_P$. The ST is the proximal operator of the $\ell_1$-norm. 
Understanding the ST is of chief interest since it is at the heart of any proximal
splitting algorithm solving a regularized inverse problem involving
terms of the form $\norm{D^* x}_1$ where $D$ is a linear operator.

Let first recall the definition of soft-thresholding.
\begin{defn}[Soft-Thresholding]~
  The soft-thresholding (ST) is defined, for $\lambda > 0$, and
  for all $1 \leq i \leq P$, as
  \begin{equation}\label{eq:st}
    % \forall 1 \le i \le N, \quad
    \ST(y, \lambda)_i = \choice{
      y_i + \lambda & \text{if}\quad y_i \leq -\lambda\\
      0 & \text{if}\quad -\lambda < y_i < \lambda\\
      y_i - \lambda & \text{otherwise}
    }~.
  \end{equation}%
\end{defn}%
Observe that as a proximity operator of a norm, soft-thresholding satisfies Assumptions~\ref{assump:L1} through \ref{assump:L2} of Theorem~\ref{thm:sugar}, see the corresponding discussion. Hence, we already anticipate from Theorem~\ref{thm:sugar} that our gradient estimator of the soft-thresholding risk is asymptotically unbiased.

We start with following lemma which collects the statistics of the gradient of the finite difference DOF estimator.
%, and which will be at the heart of the next results.
\begin{lem}[Statistics of the gradient of the finite difference DOF estimator]\label{prop:st_stats}
  Let $0 < \epsilon < 2\lambda$.
  The weak gradient of
  $\lambda \mapsto \EDOFA{\ST}(Y, \lambda, \epsilon)$
  is such that
  %\GP{The following expression can be shorten by introducing an auxiliary function $\phi_\pm$ gathering the erf $\pm$ erf terms, which are repeated many times. }
  \begin{align*}%\label{eq:diff_dof_st}
    \begin{array}{r@{\;}l@{\;}r@{\;}l@{\;}l}
      \displaystyle
      \EE_W \left[ \GRAD{2}{\EDOFA{\ST}}(Y, \lambda, \epsilon) \right]
      &=
      \displaystyle
      \frac{-1}{2}
      \sum_{i = 1}^P
      \frac{\varphi[(\mu_0)_i, \lambda, \epsilon]}{\epsilon},\\
  %  \end{array}
  %\end{align*}
  %\begin{align*}%\label{eq:diff_dof_st}
  %  \begin{array}{r@{\;}l@{\;}r@{\;}l@{\;}l}
      \displaystyle
      \text{and} \quad
      \VV_W \left[ \GRAD{2}{\EDOFA{\ST}}(Y, \lambda, \epsilon) \right]
      &=
      \displaystyle
      \frac{1}{2\epsilon}
      \sum_{i = 1}^P
      \frac{\phi[(\mu_0)_i, \lambda, \epsilon]}{\epsilon}
      -
      \frac{1}{4}
      \sum_{i = 1}^P
      \bigg[
      \frac{\phi[(\mu_0)_i, \lambda, \epsilon]}{\epsilon}
      \bigg]^2~.\\%[-0.5cm]
    \end{array}
  \end{align*}
  where for $a \in \RR$, $\varphi[a, \lambda, \epsilon] =
        \erf\pa{\frac{a + \lambda + \epsilon}{\sqrt{2}\sigma}}
        -
        \erf\pa{\frac{a + \lambda }{\sqrt{2}\sigma}}
        +
        \erf\pa{\frac{a - \lambda  + \epsilon}{\sqrt{2}\sigma}}
        -
        \erf\pa{\frac{a - \lambda }{\sqrt{2}\sigma}}
$.
\end{lem}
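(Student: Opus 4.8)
The plan is to reduce everything to one-dimensional Gaussian integrals by exploiting the separability of $\ST$ and the independence of the noise coordinates. Since $\ST$ acts coordinatewise and $A=\Id_P$, for $\epsilon>0$ the estimator $\EDOFA{\ST}(y,\lambda,\epsilon)$ is a sum of scalar finite differences, and its weak derivative in $\lambda$ is obtained coordinatewise from the scalar identity $\partial_\lambda \ST(t,\lambda) = -\operatorname{sign}(t)\,\mathds{1}_{\{|t|>\lambda\}}$, valid for a.e. $t$ (the kinks $t=\pm\lambda$ forming a null set). This yields
\[
\GRAD{2}{\EDOFA{\ST}}(Y,\lambda,\epsilon) = \frac{1}{\epsilon}\sum_{i=1}^P g_i, \qquad g_i = \operatorname{sign}(Y_i)\mathds{1}_{\{|Y_i|>\lambda\}} - \operatorname{sign}(Y_i+\epsilon)\mathds{1}_{\{|Y_i+\epsilon|>\lambda\}}.
\]
Because $g_i$ depends only on $Y_i\sim\Nn((\mu_0)_i,\sigma^2)$ and the coordinates of $Y$ are independent, the $g_i$ are independent, so $\EE_W$ and $\VV_W$ of $\GRAD{2}{\EDOFA{\ST}}$ split into coordinatewise sums equal to $\tfrac{1}{\epsilon}\sum_i\EE[g_i]$ and $\tfrac{1}{\epsilon^2}\sum_i\VV[g_i]$. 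A minor technical check is that, for fixed $\lambda$, the classical and weak $\lambda$-derivatives agree off a Gaussian-null set (Rademacher, since $\lambda\mapsto\ST(t,\lambda)$ is Lipschitz), so these expectations are well defined and Fubini applies.

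Next I would compute the mean $\EE[g_i]$. Writing $s_1=\operatorname{sign}(Y_i)\mathds{1}_{\{|Y_i|>\lambda\}}$, its expectation is $\Pr(Y_i>\lambda)-\Pr(Y_i<-\lambda)$, and evaluating these Gaussian tail probabilities with $a_i=(\mu_0)_i$ gives $\EE[s_1]=\tfrac12\erf(\tfrac{a_i-\lambda}{\sqrt2\sigma})+\tfrac12\erf(\tfrac{a_i+\lambda}{\sqrt2\sigma})$. The second indicator $s_2=\operatorname{sign}(Y_i+\epsilon)\mathds{1}_{\{|Y_i+\epsilon|>\lambda\}}$ has the same form with $a_i$ replaced by $a_i+\epsilon$. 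Subtracting and regrouping the four error functions reproduces exactly $\EE[g_i]=-\tfrac12\varphi[a_i,\lambda,\epsilon]$, so that dividing by $\epsilon$ and summing yields the announced mean.

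The crux is the second moment, and this is where the hypothesis $0<\epsilon<2\lambda$ enters. Noting that $g_i\in\{-1,0,1\}$, I would compute $g_i^2$ by a case analysis on $Y_i$ relative to the thresholding events $A=\{|Y_i|>\lambda\}$ and $B=\{|Y_i+\epsilon|>\lambda\}$. On $A\cap B$ the two signs necessarily coincide, hence $g_i=0$ there; this is precisely what $\epsilon<2\lambda$ guarantees, since it keeps the shifted threshold $\lambda-\epsilon$ above $-\lambda$ and thus forbids any region where $Y_i$ and $Y_i+\epsilon$ are thresholded with opposite signs. Consequently $g_i^2=\mathds{1}_{A\triangle B}$ with $A\triangle B=[-\lambda-\epsilon,-\lambda)\cup(\lambda-\epsilon,\lambda]$, a union of two disjoint intervals of length $\epsilon$. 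Integrating the Gaussian density over these two intervals and rewriting in terms of error functions gives $\EE[g_i^2]=\tfrac12\varphi[a_i,\lambda,\epsilon]$, and combining with the squared mean gives $\VV[g_i]=\tfrac12\varphi[a_i,\lambda,\epsilon]-\tfrac14\varphi[a_i,\lambda,\epsilon]^2$; summing $\tfrac{1}{\epsilon^2}\sum_i\VV[g_i]$ then produces the stated variance (the function $\phi$ there coinciding with $\varphi$). The main obstacle is exactly this sign-product bookkeeping leading to the clean identity $g_i^2=\mathds{1}_{A\triangle B}$: only under $\epsilon<2\lambda$ does the cross term on $A\cap B$ vanish, and keeping track of which interval carries which sign is the one genuinely delicate point.
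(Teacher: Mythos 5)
Your proposal is correct and follows essentially the same route as the paper: both reduce the gradient of the finite-difference DOF to the coordinatewise increments $h(Y+\epsilon e_i)_i - h(Y)_i$ of the $\lambda$-derivative of soft-thresholding, use $\epsilon<2\lambda$ to show each increment is $-1$ exactly on the two length-$\epsilon$ intervals $[-\lambda-\epsilon,-\lambda)\cup(\lambda-\epsilon,\lambda]$ and $0$ elsewhere, and then read off the mean and variance from independent (scaled) Bernoulli variables with truncated-Gaussian probabilities. Your symmetric-difference bookkeeping for $g_i^2$ and the separate tail-probability computation of the mean are just a slightly different organization of the same calculation, and both recover the stated formulas (with $\phi=\varphi$ in the variance, as in the paper).
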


We now turn to the asymptotic behavior of the proposed gradient estimator of the risk for large $P$, at
a single realization of $Y$, i.e., our observation $y$.
To this end, we first have to define how
the observation model evolves with the dimension $P$.
Given $z_0 \in \RR^N$, we consider the sequence
$\{\Psi_P\}_{P \geq 1}$ where $\Psi_P \in \RR^{P \times N}$ and such that, for all $P>1$,
$\Psi_{P}$ is the sub-matrix obtained by cutting down one line of $\Psi_{P+1}$.
We can then define a sequence of observation models as a the sequence
of random vector $\{Y_P\}_{P \geq 1}$ defined as
\begin{align}
  Y_P = \Psi_P z_0 + W_P
  \qwhereq
  W_P \sim \Nn(0, \sigma^2 \Id_P).
\end{align}
%We moreover assume that for all $P$ the elements $(\Psi_P)_{i,j}$
%for all $i \leq P$, $j \leq N$, are independent of the current dimension $P$.
We also define the sequence $\{(\mu_0)_P\}_{P \geq 1}$ where $(\mu_0)_P = \Psi_P z_0$.
%In particular, the elements of $(\mu_P)_i$, for all $i \leq P$, are independent
%of $P$. % such that $\norm{\mu_P} = o(P)$ for $\rho > 1$.
In the following, for the sake of clarity,
we omit the dependency of $Y_P$, $W_P$ and $(\mu_0)_P$
on $P$.

%\GP{The following statement are not very clear because now $P$ varies. This necessitate a bit of explanation. Maybe for instance explain that one consider a given input vector $z_0 \in \RR^\NN$ and then form an input vector $\mu_0 = (z_{0,i})_{i=1}^P \in \RR^P$ and similarely for the observation $Y$, and that for clarity we omit the dependency of $\mu_0$ and $Y$ on $P$. }

%\GP{For me it is still a bit mysterious how consistency can hold without any assumption (such as being summable) on $X_0$. Since I did not understood the proof completely (see my questions within the proof), I cannot really tell more. }

We can now state our consistency result for soft-thresholding.

\begin{thm}[Consistency of SUGAR]~\label{thm:cons_sugar}
  Take $\hat{\epsilon}(P)$ such that $\lim_{P \to \infty} \hat{\epsilon}(P) = 0$ and
  $\lim_{P \to \infty} P^{-1} \hat{\epsilon}(P)^{-1} = 0$. Then for any
%\GP{Isn't it possible to explicitly give these Lebesgue points? }
Lebesgue point $\lambda > 0$ (i.e.~such that $\forall (i, P), \lambda \ne |Y_i| \qandq  \lambda \ne |Y_i + \hat{\epsilon}(P) e_i|$)
  \begin{align*}
    \uplim{P \to \infty} \left[ \frac{1}{P} \left(
        \SUGARA{\ST}(Y, \lambda, \hat{\epsilon}(P))
        - \GRAD{2}{\RISK{\ST}}(\mu_0, \lambda) \right) \right] &= 0\\
    \qandq
    \uplim{P \to \infty} \left[ \frac{1}{P} \left(
        \GRAD{2}{\EDOFA{\ST}}(Y, \lambda, \hat{\epsilon}(P))
        - \GRAD{2}{\DOF{\ST}}(\mu_0, \lambda) \right) \right] &= 0~.
  \end{align*}
%%%  where
%%%  \begin{align*}
%%%    \GRAD{2}{\RISK{\ST}}(\mu_0, \lambda) &=
%%%    \lambda P -
%%%    \frac{\lambda}{2}
%%%    \sum_{i = 0}^P
%%%    \left[
%%%      \erf\left( \frac{(\mu_0)_i + \lambda}{\sqrt{2} \sigma}\right)
%%%      +
%%%      \erf\left( \frac{(\mu_0)_i - \lambda}{\sqrt{2} \sigma}\right)
%%%    \right] + \GRAD{2}{\DOF{\ST}}(\mu_0, \lambda)\\
%%%    \qandq
%%%    \GRAD{2}{\DOF{\ST}}(\mu_0, \lambda) &=
%%%    \frac{-1}{\sqrt{2 \pi} \sigma}
%%%    \sum_{i = 0}^P
%%%    \left[
%%%      \exp\left( -\frac{((\mu_0)_i + \lambda)^2}{2 \sigma^2}\right)
%%%      +
%%%      \exp\left( -\frac{((\mu_0)_i - \lambda)^2}{2 \sigma^2}\right)
%%%    \right]~.
%%%  \end{align*}
\end{thm}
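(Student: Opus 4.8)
The plan is to exploit the coordinate-wise separability of soft-thresholding and to treat the two contributions to $\SUGARA{\ST}$ separately, each via a bias--variance (Chebyshev) argument that yields convergence in probability, i.e.\ the $\uplim$ in the statement. Specializing Proposition~\ref{prop:sugar} to $A=\Id_P$, one has $\SUGARA{\ST}(y,\lambda,\epsilon)=2\JACs{2}{\ST}(y,\lambda)^\ast(\ST(y,\lambda)-y)+2\sigma^2\GRAD{2}{\EDOFA{\ST}}(y,\lambda,\epsilon)$, where only the second (DOF-gradient) term carries the dependence on $\epsilon$. Since Theorem~\ref{thm:sugar} gives $\GRAD{2}{\RISK{\ST}}(\mu_0,\lambda)=\lim_{\epsilon\to0}\EE_W[\SUGARA{\ST}(Y,\lambda,\epsilon)]$ and the data-fidelity term is $\epsilon$-free, the risk gradient splits accordingly as $\GRAD{2}{\RISK{\ST}}=\EE_W[2\JACs{2}{\ST}(Y,\lambda)^\ast(\ST(Y,\lambda)-Y)]+2\sigma^2\GRAD{2}{\DOF{\ST}}$. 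Hence it suffices to show that (i) the normalized DOF-gradient term concentrates at $\tfrac1P\GRAD{2}{\DOF{\ST}}$, which is precisely the second claimed limit, and (ii) the normalized data-fidelity term concentrates at its mean; adding $2\sigma^2\times$(i) to (ii) then yields the first limit.

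For (i) I would invoke Lemma~\ref{prop:st_stats} directly. The bias $\tfrac1P(\EE_W[\GRAD{2}{\EDOFA{\ST}}(Y,\lambda,\epsilon)]-\GRAD{2}{\DOF{\ST}}(\mu_0,\lambda))$ is an average over $i$ of the defect $\tfrac{\varphi[(\mu_0)_i,\lambda,\epsilon]}{\epsilon}$ minus its $\epsilon\to0$ limit; since $\varphi$ is built from error functions whose derivatives are bounded, a second-order Taylor expansion in $\epsilon$ gives $\big|\tfrac{\varphi[a,\lambda,\epsilon]}{\epsilon}-\partial_\epsilon\varphi[a,\lambda,0]\big|\le C\epsilon$ uniformly in $a$, so the normalized bias is $O(\epsilon)$ and vanishes because $\hat\epsilon(P)\to0$. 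For the variance, the explicit formula in Lemma~\ref{prop:st_stats} shows that $\VV_W[\tfrac1P\GRAD{2}{\EDOFA{\ST}}]$ is of order $\tfrac{1}{P\epsilon}+\tfrac1P$ (the first summand of the variance scales like $P/\epsilon$, the subtracted square like $P$, and one divides by $P^2$); this is where the hypothesis $\lim_P P^{-1}\hat\epsilon(P)^{-1}=0$ is consumed, forcing the variance to $0$. Chebyshev's inequality then gives convergence in probability, establishing the second limit.

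For (ii) I would compute the data-fidelity term in closed form using the piecewise definition \eqref{eq:st}: on $\{|y_i|>\lambda\}$ one has $\partial_\lambda\ST(y_i,\lambda)\cdot(\ST(y_i,\lambda)-y_i)=\lambda$, and the term vanishes on $\{|y_i|<\lambda\}$, so $2\JACs{2}{\ST}(Y,\lambda)^\ast(\ST(Y,\lambda)-Y)=2\lambda\,\#\{i:\,|Y_i|>\lambda\}$. Normalized by $P$, this is $2\lambda$ times an average of independent, uniformly bounded indicators $\mathds{1}_{\{|Y_i|>\lambda\}}$, whose variance is at most $\lambda^2/P$; a law of large numbers (again Chebyshev suffices) gives $\tfrac1P(2\lambda\,\#\{i:|Y_i|>\lambda\}-\EE_W[2\lambda\,\#\{i:|Y_i|>\lambda\}])\to0$ in probability. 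The Lebesgue-point hypothesis $\lambda\ne|Y_i|$ (and $\lambda\ne|Y_i+\hat\epsilon(P)e_i|$) guarantees we stay off the measure-zero set where the indicators and the thresholding derivatives jump, so the pointwise identities hold a.e.

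Finally, I would combine the pieces: writing $\tfrac1P\SUGARA{\ST}-\tfrac1P\GRAD{2}{\RISK{\ST}}$ as the fluctuation of the data-fidelity term, plus $2\sigma^2$ times the fluctuation of the DOF-gradient term, plus $2\sigma^2$ times the DOF bias, all three pieces have been shown to tend to $0$, which yields the first limit; the second limit is exactly (i). The main obstacle I anticipate is step (i): the variance control of the finite-difference DOF gradient, because it is the only place where the delicate interplay between $P$ and $\epsilon$ appears. One must verify that the summands $\phi[(\mu_0)_i,\lambda,\epsilon]/\epsilon$ in Lemma~\ref{prop:st_stats} are bounded uniformly in $i$ and in $\epsilon$, that the explicit expression remains a genuine (nonnegative) variance, and that the resulting $O(1/(P\epsilon))$ rate is sharp enough to be killed by the hypothesis on $\hat\epsilon(P)$ while still allowing $\hat\epsilon(P)\to0$ for the bias; it is precisely this two-sided constraint on $\hat\epsilon(P)$ that the theorem encodes.
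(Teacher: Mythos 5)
Your proposal is correct and follows essentially the same route as the paper: the same decomposition of $\SUGARA{\ST}$ into the data-fidelity term $2\lambda\,\#\{|Y|>\lambda\}$ (with its Bernoulli variance bound) and the finite-difference DOF-gradient term whose variance is controlled via Lemma~\ref{prop:st_stats} at rate $O(1/(P\hat\epsilon(P)))$, concluded by Chebyshev together with asymptotic unbiasedness. The only cosmetic difference is that you make the bias of the DOF-gradient term quantitative via a Taylor expansion of $\varphi$, where the paper simply invokes Theorem~\ref{thm:sugar}.
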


In plain words, Theorem \ref{thm:cons_sugar} asserts that for our gradient estimator of the soft-thresholding risk
to be consistent, $\hat\epsilon(P)$ should not decrease faster than the inverse of the dimension $P$.
With the proviso that $\hat\epsilon(P)$ fulfills the requirement, for $P$ large enough,
$\frac{1}{P}\SUGARA{\ST}(y, \lambda, \hat{\epsilon}(P))$
is guaranteed to come close to
$\frac{1}{P}\GRAD{2}{\RISK{\ST}}(\mu_0, \lambda)$ with high probability.

Unfortunately, Theorem \ref{thm:cons_sugar} does not dictate an explicit
choice of $\hat{\epsilon}(P)$, and the practitioner may wonder how to choose this value for a given $P$.
It turns out that studying the mean squared error (MSE) of the gradient of the finite difference DOF estimator
helps unveiling the link between $P$ and $\epsilon$ through a bias-variance trade-off.

\begin{prop}[MSE of the gradient of the finite difference DOF estimator]
  \label{prop:st_mse}
  The weak gradient of
  $\lambda \mapsto \EDOF{\ST}(Y, \lambda, \epsilon)$
  is such that
  \begin{align*}%\label{eq:mse}
    &\EE_{W} \left[
      \frac{1}{P}
      \left(
        \GRAD{2}{\EDOF{\ST}}(Y, \lambda, \epsilon) -
        \GRAD{2}{\DOF{\ST}}(\mu_0, \lambda)
      \right)
    \right]^2
    =\\
    &
    \quad
    \underbrace{\frac{1}{P^2}
      \left(\EE_{W}\!\left[ \GRAD{2}{\EDOF{\ST}}(Y, \lambda, \epsilon) \right] \!-\!
        \GRAD{2}{\DOF{\ST}}(\mu_0, \lambda) \right)^2
    }_{Bias^2}
    +
    \underbrace{
      \frac{1}{P^2}
      \VV_{W}\!\left[ \GRAD{2}{\EDOF{\ST}}(Y, \lambda, \epsilon) \right].
    }_{Variance}
  \end{align*}
  where the statistics of $\GRAD{2}{\EDOF{\ST}}(Y, \lambda, \epsilon)$
  are given in Lemma \ref{prop:st_stats} and
  \begin{align*}%\label{eq:diff_dof_st}
    \GRAD{2}{\DOF{\ST}}(\mu_0, \lambda)
    =
    \frac{-1}{\sqrt{2 \pi} \sigma}
    \sum_{i = 0}^P
    \bigg[
      \exp\left( -\frac{((\mu_0)_i + \lambda)^2}{2 \sigma^2}\right)
      +
      \exp\left( -\frac{((\mu_0)_i - \lambda)^2}{2 \sigma^2}\right)
    \bigg]~.
  \end{align*}
\end{prop}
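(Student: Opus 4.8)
The plan is to recognize that this identity is simply the standard \textbf{bias--variance decomposition} of the mean squared error, specialized to the finite-difference DOF estimator, and then to supply the only genuinely new ingredient: a closed form for the deterministic target $\GRAD{2}{\DOF{\ST}}(\mu_0,\lambda)$.

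First I would set $X = \frac{1}{P}\GRAD{2}{\EDOF{\ST}}(Y,\lambda,\epsilon)$ and $t = \frac{1}{P}\GRAD{2}{\DOF{\ST}}(\mu_0,\lambda)$, and note that $t$ is \emph{deterministic} (it depends only on $\mu_0$, $\lambda$ and $\sigma$) whereas $X$ is a function of the random vector $Y$. The elementary identity for the mean squared error,
\[
\EE_W\!\left[(X-t)^2\right] = \left(\EE_W[X]-t\right)^2 + \VV_W[X],
\]
then applies verbatim, and pulling $\frac{1}{P^2}$ out of each term reproduces exactly the two contributions labelled $\mathit{Bias}^2$ and $\mathit{Variance}$ in the statement. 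The first and second moments of $\GRAD{2}{\EDOF{\ST}}(Y,\lambda,\epsilon)$ entering these terms are precisely those already computed in Lemma~\ref{prop:st_stats}, so nothing further is required on the estimator side.

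It then remains to evaluate $\GRAD{2}{\DOF{\ST}}(\mu_0,\lambda)$ in closed form, and I would proceed through the true DOF itself. By the definition of soft-thresholding~\eqref{eq:st}, the diagonal of its weak Jacobian is the indicator $\big(\JACs{1}{\ST}(y,\lambda)\big)_{i,i}=\mathds{1}_{\{|y_i|>\lambda\}}$, so the exact DOF estimate is the count $\tr\big(\JACs{1}{\ST}(Y,\lambda)\big)=\sum_{i}\mathds{1}_{\{|Y_i|>\lambda\}}$. Invoking the unbiasedness recalled in Section~\ref{sec:sure} with $A=\Id_P$ yields
\[
\DOF{\ST}(\mu_0,\lambda)=\EE_W\!\left[\sum_{i=1}^P\mathds{1}_{\{|Y_i|>\lambda\}}\right]=\sum_{i=1}^P\Pr\big(|Y_i|>\lambda\big), \qquad Y_i\sim\Nn\big((\mu_0)_i,\sigma^2\big).
\]
Each exceedance probability is a smooth function of $\lambda$ that I would write through the error function using the Gaussianity of the independent coordinates. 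Differentiating the finite sum term by term turns each error function into a Gaussian density and produces the announced expression; since $\lambda\mapsto\DOF{\ST}(\mu_0,\lambda)$ is $C^\infty$ (consistent with the weak differentiability already granted by Theorem~\ref{thm:sugar}), its weak gradient coincides with this classical derivative.

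As for difficulty, there is essentially none of an analytical nature: the result is computational. The only two points deserving a line of justification are (i) that the DOF of soft-thresholding equals the sum of tail probabilities $\sum_i\Pr(|Y_i|>\lambda)$, which rests on the weak derivative of the soft-thresholding being an indicator together with Stein unbiasedness, both already available; and (ii) that differentiation commutes with the finite sum and that the weak and classical $\lambda$-derivatives agree, which is immediate once $\DOF{\ST}(\mu_0,\cdot)$ is seen to be smooth. Combining this closed form with the moments of Lemma~\ref{prop:st_stats} closes the bias--variance identity.
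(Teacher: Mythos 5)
Your proof is correct, and the first half (the bias--variance split of the MSE) is exactly what the paper does. For the closed form of $\GRAD{2}{\DOF{\ST}}(\mu_0,\lambda)$, however, you take a genuinely different route. The paper obtains it by letting $\epsilon \to 0$ in the expression $\EE_W[\GRAD{2}{\EDOFA{\ST}}(Y,\lambda,\epsilon)] = -\tfrac{1}{2}\sum_i \varphi[(\mu_0)_i,\lambda,\epsilon]/\epsilon$ already computed in Lemma~\ref{prop:st_stats}, the difference quotient of error functions converging to the Gaussian densities; the legitimacy of identifying that limit with $\GRAD{2}{\DOF{\ST}}(\mu_0,\lambda)$ rests on the asymptotic unbiasedness of Theorem~\ref{thm:sugar}. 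You instead compute the true DOF directly: since the diagonal of the weak Jacobian of $\ST(\cdot,\lambda)$ is $\mathds{1}_{\{|y_i|>\lambda\}}$, Stein unbiasedness with $A=\Id$ gives $\DOF{\ST}(\mu_0,\lambda)=\sum_i \Pr(|Y_i|>\lambda)$, a finite sum of smooth error-function expressions in $\lambda$, which you then differentiate classically. Both yield the same formula (I checked the constants: each $\erf$ derivative contributes $\tfrac{2}{\sqrt{2\pi}\sigma}$ times a Gaussian, and the factor $-\tfrac{1}{2}$ from the tail probabilities reproduces the prefactor $\tfrac{-1}{\sqrt{2\pi}\sigma}$). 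Your route is more self-contained, bypassing Lemma~\ref{prop:st_stats} and Theorem~\ref{thm:sugar} for this computation and making transparent that $\DOF{\ST}(\mu_0,\cdot)$ is $C^\infty$ so that weak and classical derivatives coincide; the paper's route is shorter given that the $\epsilon$-dependent expectation is already in hand, and has the side benefit of exhibiting the bias term as an explicit finite difference of the very quantity being targeted.
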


\begin{figure}[!t]
  \centering
  \subfigure[Bias-variance trade-off w.r.t. $\epsilon$]{\includegraphics[height=0.34\linewidth]{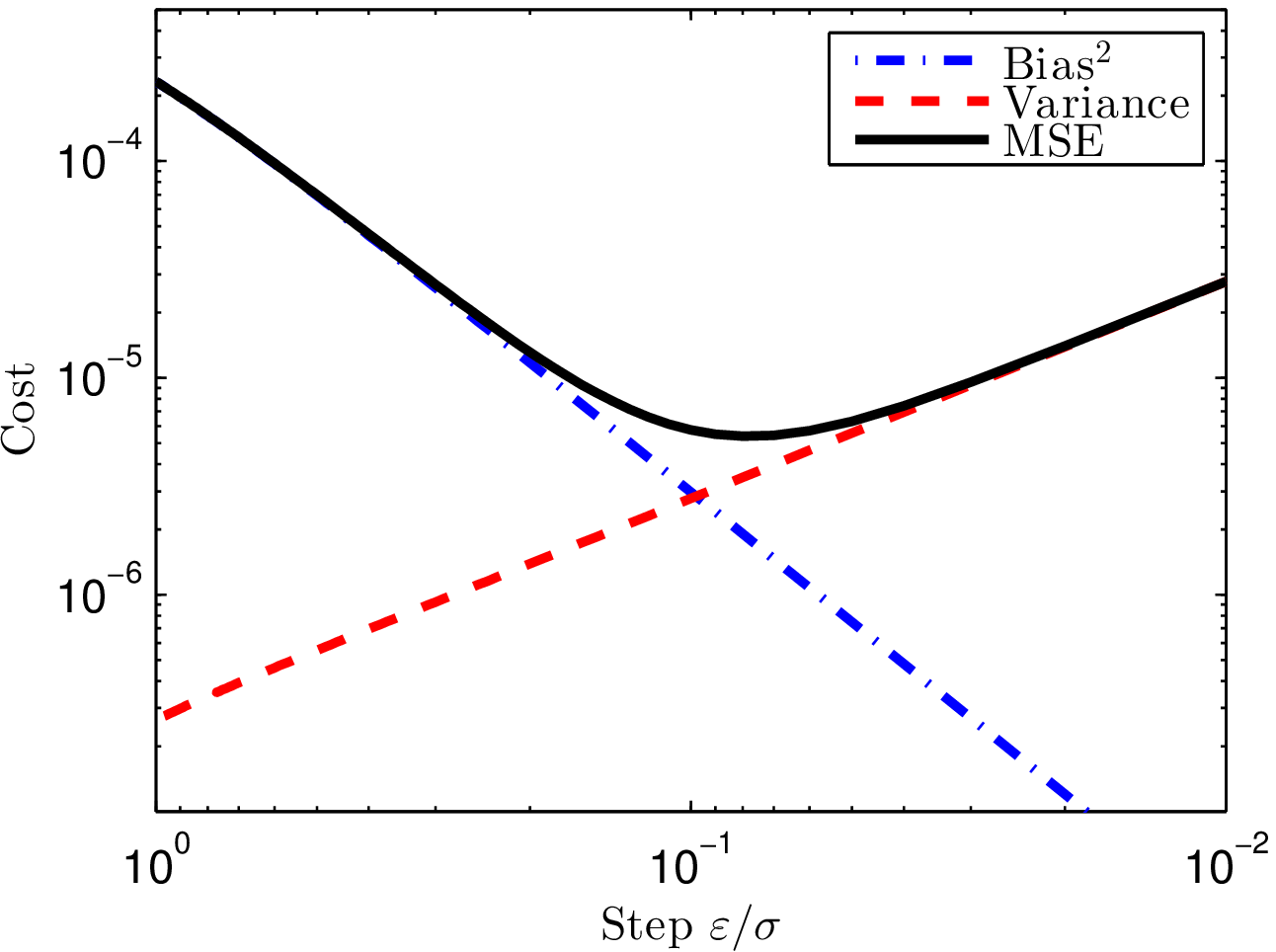}}
  \subfigure[Bias-variance w.r.t. $P$ (using $\hat{\epsilon}(P)$)]{\includegraphics[height=0.34\linewidth]{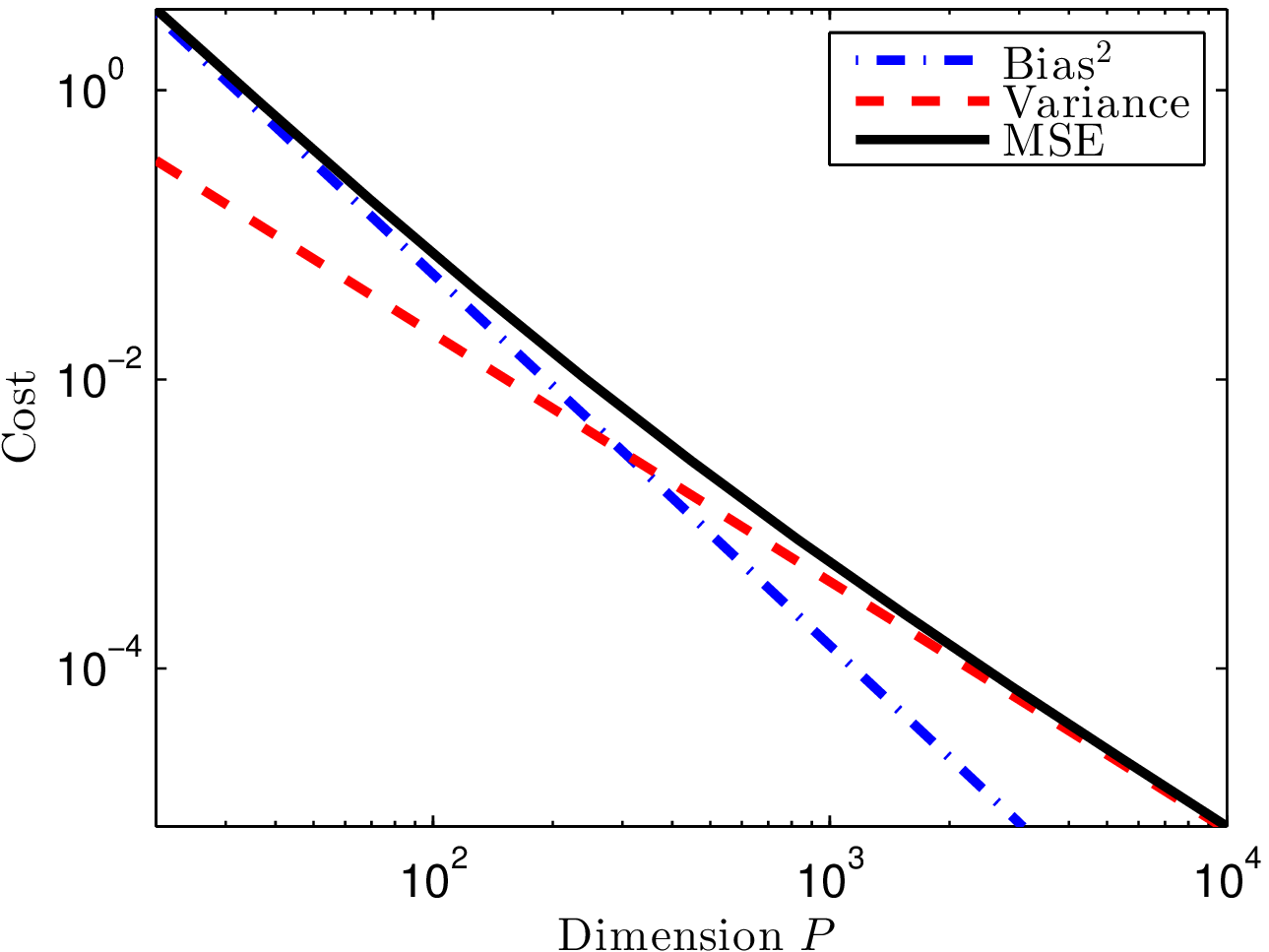}}
  \subfigure[MSE w.r.t. $(\epsilon, P)$]{\includegraphics[height=0.34\linewidth]{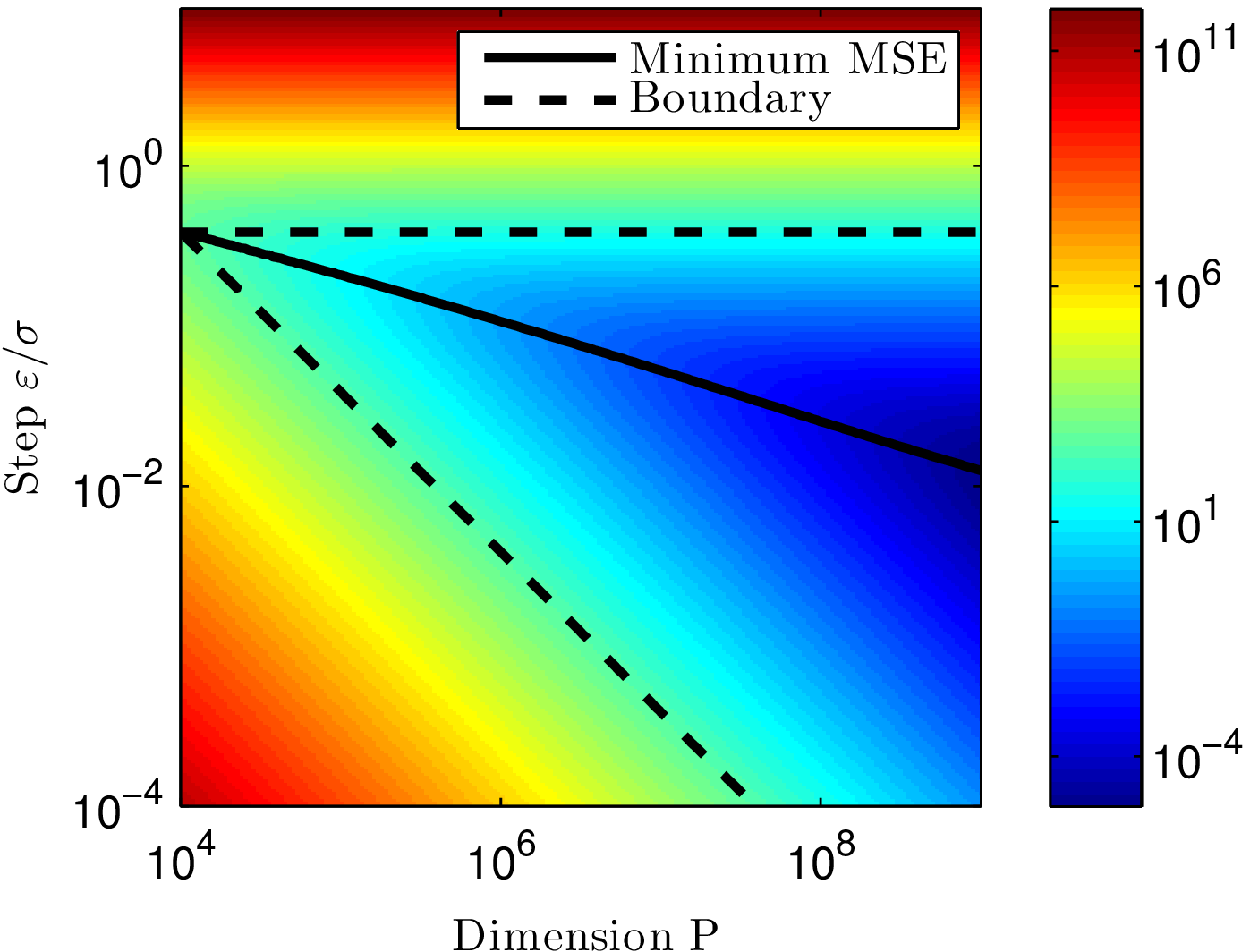}}
  \subfigure[Choice of $\sigma$, $\mu_0$ and $\lambda$]{\includegraphics[height=0.34\linewidth]{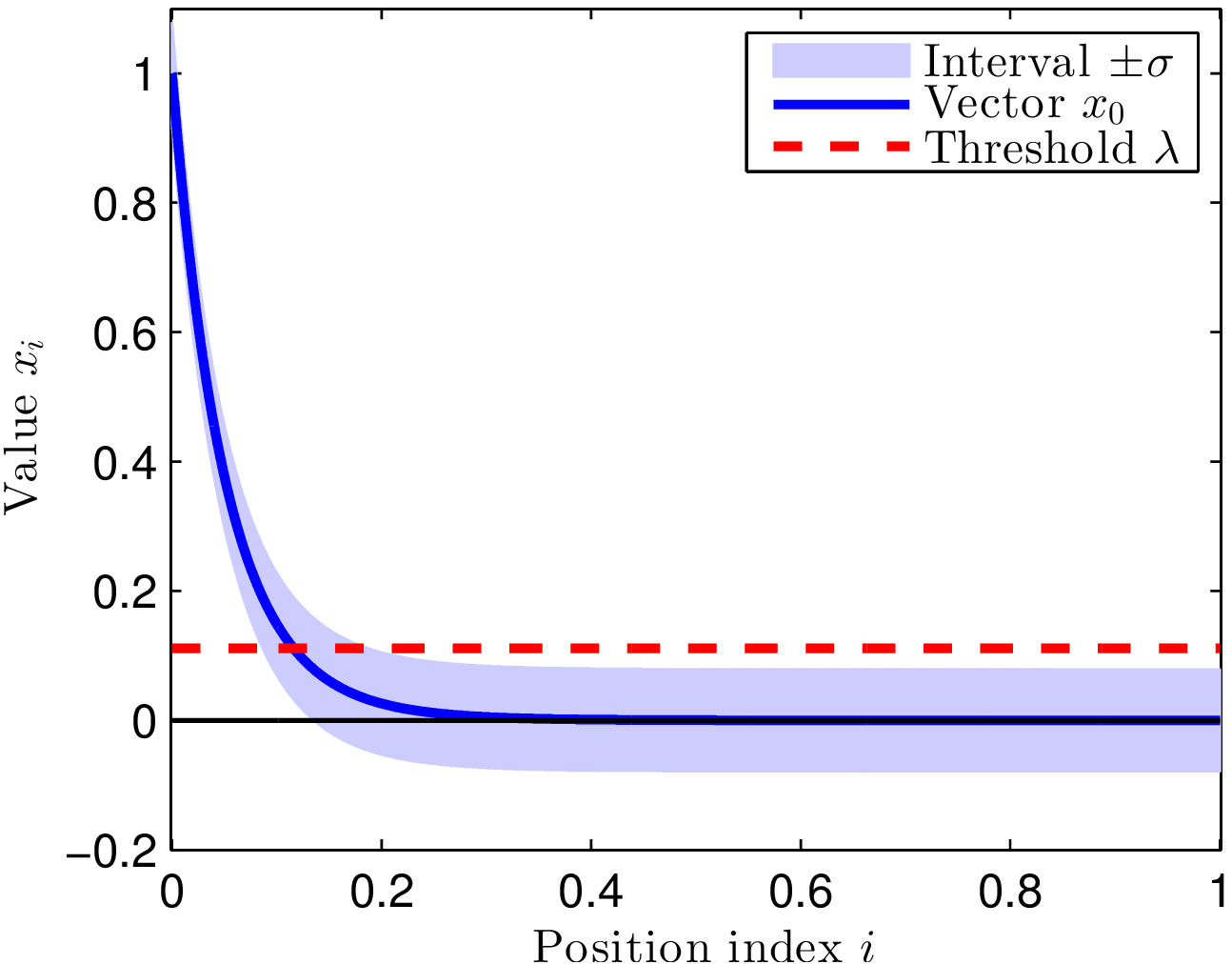}}
  \caption{Bias-variance trade-off of the
    gradient estimator of the DOF of soft-thresholding,
    (a) with respect to the step
    $\epsilon$ and (b) with respect to the dimension $P$ when using
    a power decay function $P \mapsto \hat{\epsilon}(P)$.
    (c) Its mean squared error as a function of $P$ and
    $\epsilon$ (in logarithmic scales).
    The solid line represents the pairs $(\hat{\epsilon}^\star(P), P)$
    where, for a fixed dimension $P$,
    $\hat{\epsilon}^\star(P)$ minimizes the mean squared error.
    The function $\hat{\epsilon}^\star(P)$ looks like a power function
    of the form of $C \sigma / P^\alpha$ with
    $C > 0$ and $0 < \alpha < 1$.
    The dashed lines represent respectively the power functions
    $\hat{\epsilon}^\mathrm{inf}(P) = C\sigma$ and
    $\hat{\epsilon}^\mathrm{sup}(P) = C\sigma/P$
    outside which the mean squared error diverges when $P$ increases.
    (d) Description of the settings of the experiments, i.e.,
    the choice of $\sigma$, $\mu_0$ and $\lambda$.
  }
  \label{fig:bias_var_mse_st}
\end{figure}%

Thus, if $\mu_0$ were given, the quantities in Proposition \ref{prop:st_mse} could be computed in closed-form. The MSE can then be evaluated to select the optimal value of $\epsilon$ for a fixed dimension $P$ and a given threshold $\lambda$. See the following numerical experiments which illustrate this relationship. When $\mu_0$ is unknown, an {\it a priori} model can be imposed, such as for instance belonging to some ball promoting sparsity, e.g.~a weak $\ell_{\gamma}$-ball for $\gamma > 0$. For $\gamma$ sufficiently small, this ball corresponds to compressible or nearly sparse vectors $\mu_0$ whose entries $\abs{\mu_i}$ sorted in descending order of magnitude behave as $O(i^{-1/\gamma})$. With such a model at hand, the MSE in Proposition~\ref{prop:st_mse} can be optimized for $\epsilon$ given $P$, $\sigma$, $\lambda$ and $\gamma$. This however entails a highly non-linear equation that cannot be solved in closed form. We defer such a development to a future work.

Figure~\ref{fig:bias_var_mse_st}.(a) shows the evolution of
the bias and the variance as a function of the ratio $\epsilon/\sigma$
for fixed values of $\sigma$, $\lambda$ and
a compressible vector $\mu_0$, i.e.~$\abs{(\mu_0)_i} = O(i^{-1/\gamma})$,
chosen as illustrated on Figure \ref{fig:bias_var_mse_st}.(d).
When $\epsilon \to 0$, for fixed $P$, the bias
vanishes while the variance, and in turn the MSE,
increases. However, for a step $\epsilon > 0$,
the MSE is finite
and seems to be optimal around the value $0.1 \sigma$.
Figure \ref{fig:bias_var_mse_st}.(c) shows the evolution
of the MSE as a function of the dimension
$P$ and the ratio $\epsilon/\sigma$ for the same fixed values
as before. The optimal step, minimizing the MSE,
seems to evolve as a power decay function (the scale is log-log) of the form
$\epsilon^\star(P) = C \sigma / P^ \alpha$ with
$C > 0$ and $0 < \alpha < 1$.
Of course the optimal constants $C$ and $\alpha$ depend on
the choice of $\mu_0$, $\sigma$ and $\lambda$.
However,
whatever $C > 0$ and $0 < \alpha < 1$, or more generally
for any admissible choice of $\hat{\epsilon}$ such that
$\lim_{P \to \infty} \hat{\epsilon}(P) = 0$ and
$\lim_{P \to \infty} P^{-1} \hat{\epsilon}(P)^{-1} = 0$,
the MSE vanishes with respect to $P$.
Figure \ref{fig:bias_var_mse_st}.(b) shows indeed the evolution
of the bias, the variance and the MSE as a function of the
dimension $P$ when $\hat{\epsilon}$ is chosen
as a power decay function.
For $\alpha = 0$ or $\alpha = 1$, the MSE
remains constant while, for $\alpha > 1$,
the MSE diverges which suggests the necessity of $\lim_{P \to \infty} P^{-1} \hat{\epsilon}(P)^{-1} = 0$.

\section{Differentiation of an Iterative Scheme}
\label{sec:formal_diff}

We now turn to iterative algorithms that involve linear and soft-thresholding operators.
%\GP{I did not understand the statement that follows about non-linearities and soft thresholding. This seems a bit fuzzy. } Since all non-linearities will result from soft-thresholding operations,
%it sounds reasonable to choose $\epsilon^\star(P) = C \sigma / P^ \alpha$
%as a candidate to build a reliable estimator.
%Our numerical experiments seem to confirm this choice
%(see Section \ref{sec:numeric}).
We observed empirically that for all the inverse problems exposed in Section \ref{sec:numeric}, setting $\epsilon^\star(P) = C \sigma / P^ \alpha$, as suggested by our study on the soft thresholding, resulted in a reliable way to parametrize our estimator. The effectiveness of this heuristic might be explained by the fact that the singularities encountered in most imaging problems are similar to absolute values, in order to encourage some sort of sparsity in the solution.

In this section, we focus on iterates, defined unambiguously as single-valued mappings $(y, \theta) \mapsto \il{x}(y, \theta)$,
where $\ell$ is the iteration counter of the iterative algorithm.
In this context, we propose to compute in closed-form
the derivatives of $\il{x}(y, \theta)$ with respect to either $y$ (in a direction $\delta$)
or $\theta$. This proves useful to respectively
estimate the risk via $\ASUREMCn$ (see Section \ref{sec:risk_estimation})
and estimate its gradient via $\ASUGARAMCn$
(see Section \ref{sec:risk_optimization}).

%In the following, to highlight the dependency with iterations, the solution $x(y, \theta)$ obtained after $\ell$ iterations
%is denoted by $\il{x}(y, \theta)$.

The iterative schemes we consider can be cast in the same framework, which subsumes proximal splitting algorithms designed to minimize a proper, closed and convex objective function $x \mapsto E(x,y,\theta)$, whose set of minimizers is supposed non-empty. All these algorithms can be unified as an iterative scheme of the form
\begin{equation}\label{eq-iterations}
\begin{cases}
  \il{x}  &= \gamma(\il{a} )\\
  \ill{a} &= \psi( \il{a}, y, \theta),
\end{cases}
\end{equation}
where $\il{a} \in \Aa$ is a sequence of auxiliary variables.
%(initialized for instance with $a^{(0)} = 0$),
$\psi : \Aa \times \Yy \times \Theta \to \Aa$ is a fixed point operator in such a way that $\il{a}$ converge to a fixed point $a^\star$, 
and $\gamma : \Aa \to \Xx$ is %\GP{define non-expansive and explain}
non-expansive (i.e, $\norm{\gamma(a_1) - \gamma(a_2)} \leq \norm{a_1 - a_2}$ for any $a_1, a_2 \in \Aa$)
entailing that $\il{x}$ will converge to $x^\star=\gamma(a^\star)$.
Note that for the sake of clarity, we have dropped the dependencies of $a^\star$ and $x^\star$ to $y$ and $\theta$.

To make our ideas clear, consider the instructive example where $x \mapsto E(x,y,\theta)$ is convex and $C^1(\Xx)$ with $L$-Lipschitz gradient, in which case $\Aa=\Xx$, $a = x$ and $\psi(x,y,\theta)=x - \tau \nabla_1 E(x,y,\theta)$ where $0 < \tau < 2/L$.

\subsection{Iterative Weak Differentiability}

A practical way to get the weak directional derivative $\JACs{1}{x}(y, \theta)[\delta]$ and the weak Jacobian $\JACs{2}{x}(y, \theta)$, is to compute them iteratively from the sequences \eqref{eq-iterations} by relying on the chain rule. However, two major issues have to be taken care of. First, one has to ensure weak differentiability of the iterates \eqref{eq-iterations} so that $\JACs{1}{\il{x}}(y, \theta)[\delta]$ (or resp.~to $\JACs{2}{\il{x}}(y, \theta)$) exist Lebesgue a.e. Second, one may legitimately ask whether the sequence of weak derivatives converges, and the properties of its cluster point, if any, with respect to the weak derivatives at a minimizer $x^\star$. 

Regarding weak differentiability of the iterates, it relies essentially on regularity conditions to apply the chain rule, e.g.~\cite[Section~4.2.2]{EvansGariepy92}, i.e.~regularity properties of the iteration mappings $\gamma$ and $\psi$ and of the initialization.
For instance, for proximal splitting algorithms, it turns out that $\gamma$ is the composition of one or several non-expansive operators, hence 1-Lipschitz, operators.
In turn, $\gamma$ is 1-Lipschitz. Furthermore, in all examples we consider, $\psi$ is also 1-Lipschitz with respect to its second and third arguments.
Therefore, if one starts at a Lipschitz continuous initialization, by induction, $y \mapsto \il{x}(y, \theta)$ and $\theta \mapsto \il{x}(y, \theta)$ are also Lipschitz. Using the chain rule for Lipschitz mappings~\cite[Theorem~4 and Remark, Section~4.2.2]{EvansGariepy92}, weak differentiability of $\il{x}$ follows with respect to both arguments.

As far as convergence of the sequence of weak Jacobians is concerned, this remains an open question in the general case, and we believe this would necessitate intricate arguments from non-smooth and variational analysis. This is left to future research.

From now on, we suppose that the Lipschitzian assumptions on $\gamma$, $\psi$ and the initial points hold. The next two sections detail the computation of $\JACs{1}{\il{x}}(y, \theta)[\delta]$ and $\JACs{2}{\il{x}}(y, \theta)$ in order to get the estimates $\ASUREMCn$ and $\ASUGARAMCn$.

\begin{figure}[!t]
\centering
\begin{minipage}{\linewidth}
\rule{\linewidth}{2px}
\begin{algorithmic}[0]
  \State \textbf{Algorithm} Risk estimation of an iterative scheme
\end{algorithmic}
\vspace{-0.2cm}
\rule{\linewidth}{1px}
\begin{algorithmic}[0]
  \State \makebox[3cm][l]{\textbf{Inputs:}} observation $y \in \Yy = \RR^P$, collection of parameters $\theta \in \Theta$
  \State \makebox[3cm][l]{\textbf{Parameters:}} noise variance $\sigma^2 > 0$, linear operator $\Phi \in \RR^{P\times N}$,
  \State \makebox[3cm][l]{}  matrix $A \in \RR^{M \times P}$, number $\mathcal{L}$ of iterations
  \State \makebox[3cm][l]{\textbf{Output:}} solution $x(y, \theta) \in \Xx$ and its risk estimate $\AERISK{x}(y, \theta)$\\
  \State Sample a vector $\de$ from $\Nn(0, \Id_P)$
  \State Initialize $a^{(0)} \leftarrow 0$ \hfill *
  \State Initialize $\Dd_a^{(0)} \leftarrow 0$
  \For {$\ell$ from $0$ to $\mathcal{L} - 1$} \hfill *
  \State $\ill{a} \leftarrow \psi( \il{a}, y, \theta)$ \hfill *
  %\State $\il{\Psi_a}(\il{\Dd_a}) \leftarrow J_a{\psi}( \il{a}, y, \theta)[\il{\Dd_a}]$
  %\State $\il{\Psi_y}(\delta) \leftarrow \JACs{2}{\psi}( \il{a}, y, \theta)[\delta]$
  \State $\ill{\Dd_a} \leftarrow \il{\Psi_a}(\il{\Dd_a}) + \il{\Psi_y}(\delta)$
  \EndFor \hfill *
  \State $\iL{x} \leftarrow \gamma(\iL{a})$ \hfill *
  \State $\iL{\Dd_x} \leftarrow \iL{\Gamma}_a(\iL{\Dd_a})$% = J_a{\gamma}(\iL{a})[\iL{\Dd_a}]$
  \State $\AEDOFMCn \leftarrow
  \dotp{\Phi \iL{\Dd_x}}{ A^*\!A \delta}$
  \State $\ASUREMCn \leftarrow
  \norm{A(y - \Phi \iL{x})}^2
  - \sigma^2 \tr( A^*\!A )
  + 2 \sigma^2 \AEDOFMCn$\\
  \Return {$x(y, \theta) \leftarrow \iL{x}$ and $\AERISK{x}(y, \theta) \leftarrow \ASUREMCn$}
\end{algorithmic}
\rule{\linewidth}{1px}
\end{minipage}
\caption{Pseudo-algorithm for
  risk estimation of an iterative scheme. The symbols * indicate
  the lines corresponding to the computation of $x$. The
  others are dedicated to the computation of the estimated risk
  $\AERISKn$ using Monte Carlo simulation.
  Even if computing the risk requires more operations,
  the global complexity of the algorithm is unchanged.
}
\vspace{-0.5cm}
\label{fig:pseudo-code-gsure}
\end{figure}

\begin{figure}[!t]
\centering
\begin{minipage}{\linewidth}
\rule{\linewidth}{2px}
\begin{algorithmic}[0]
  \State \textbf{Algorithm} Risk and gradient risk estimation of an iterative scheme
\end{algorithmic}
\vspace{-0.2cm}
\rule{\linewidth}{1px}
\begin{algorithmic}[0]
  \State \makebox[3cm][l]{\textbf{Inputs:}} observation $y \in \Yy = \RR^P$, collection of parameters $\theta \in \Theta$
  \State \makebox[3cm][l]{\textbf{Parameters:}} noise variance $\sigma^2 > 0$, linear operator $\Phi \in \RR^{P\times N}$,
  \State \makebox[3cm][l]{}  matrix $A \in \RR^{M \times P}$, number $\mathcal{L}$ of iterations
  \State \makebox[3cm][l]{}  decay parameters $C>0$ and $0 < \alpha < 1$
  \State \makebox[3cm][l]{\textbf{Output:}} solution $x(y, \theta) \in \Xx$, its risk estimate $\AERISK{x}(y, \theta)$,
  \State \makebox[3cm][l]{} and its gradient risk estimate $\hat{\nabla}_2{\ARISK{x}}(y, \theta)$\\
  \State Sample a vector $\de$ from $\Nn(0, \Id_P)$ \hfill *
  \State Choose $\epsilon = C \sigma / P^\alpha$ \hfill *
  \For {$y' = y$ and $y' = y + \epsilon \de$} \hfill *
  \State Initialize $a^{(0)} \leftarrow 0$ \hfill *
  \State Initialize $\Jj_a^{(0)} \leftarrow 0$
  \For {$\ell$ from $0$ to $\mathcal{L} - 1$} \hfill *
  \State $\ill{a} \leftarrow \psi( \il{a}, y', \theta)$ \hfill *
  %\State $\il{\Psi_a}( \il{\Jj_a} ) \leftarrow J_a{\psi}( \il{a}, y', \theta)[\il{\Jj_a}]$
  %\State $\il{\Psi_\theta} \leftarrow \JACs{3}{\psi}( \il{a}, y', \theta)$
  \State $\ill{\Jj_a} \leftarrow \il{\Psi_a}( \il{\Jj_a} ) + \il{\Psi_\theta}$
  \EndFor \hfill *
  \State $\il{x}(y') \leftarrow \gamma(\il{a})$ \hfill *
  \State $\il{\Jj_x}(y') \leftarrow \il{\Gamma}_a(\il{\Jj_a})$% = J_a{\gamma}(\il{a})[\il{\Jj_a}]$
  \EndFor \hfill *
  \State $\EDOFAMCn \leftarrow
  \frac{1}{\epsilon} \dotp{\Phi (\iL{x}(y+\epsilon \delta) - \iL{x}(y))}{ A^*\!A \delta}$ \hfill *
  \State $\ASUREAMCn \leftarrow
  \norm{A(y - \Phi \iL{x})}^2
  - \sigma^2 \tr( A^*\!A )
  + 2 \sigma^2 \EDOFAMCn$ \hfill *
  \State $\ASUGARAMCn \leftarrow
    2 \iL{\Jj_x}(y)^* \Phi^*\!A^*\!A (\Phi \iL{x} - y)
    + \frac{2 \sigma^2}{\epsilon}\!%
    \left(\iL{\Jj_x}(y\!+\!\epsilon \delta)\!-\!\iL{\Jj_x}(y)\right)^*\!\Phi^*\!A^*\!A \delta$\\
  \Return {$x(y, \theta) \leftarrow \iL{x}(y)$, $\AERISK{x}(y, \theta) \leftarrow \ASUREAMCn$
  and $\hat{\nabla}_2 {\ARISK{x}}(y, \theta) \leftarrow \ASUGARAMCn$}
\end{algorithmic}
\rule{\linewidth}{1px}
\end{minipage}
\caption{Pseudo-algorithm for
  risk and gradient risk estimation of an iterative scheme.
  The symbols * indicate
  the lines corresponding to the computation of $x$ and its
  estimated risk $\ARISKn$ using approximated Monte Carlo simulation,
  i.e., as described in \cite{ramani2008montecarlosure}.
  The others are dedicated to the computation of the estimated gradient
  of the risk $\hat{\nabla}{\ARISKn}$.
  Even if computing the gradient of the risk requires more operations,
  the global complexity of the algorithm is unchanged.}
\vspace{-0.5cm}
\label{fig:pseudo-code-sugar}
\end{figure}

\subsection{Computation of $\bm \ASUREMCn$ for Risk Optimization}
\label{sec:formal_diff_re}

We describe here the iterative computation of
the directional derivative $\JACs{1}{\il{x}}(y, \theta)[\delta]$
following the idea introduced in \cite{vonesch2008sure}
(see Section \ref{sec:formal_mcsure}).
Note that we focus on the directional derivative since, on the one hand,
$\JACs{1}{\il{x}}(y, \theta) \in \RR^{N \times P}$ is never used explicitly but only its trace, not to mention its storage cost,
and, on the other hand, the risk can be estimated by applying only the weak directional derivatives on random directions $\delta$
(see Section \ref{sec:mcsure} for more details).

The next proposition summarizes a recursive scheme to compute
the weak derivatives $\JACs{1}{\il{x}}(y, \theta)[\delta]$.
%, i.e., the derivative of $y \mapsto \il{x}(y, \theta)$ at $y$ applied in the direction $\de$.

\begin{prop}\label{prop:gen_diff_y}
For any vector $\delta \in \Xx$, the weak directional derivative $\il{\Dd}_x = \JACs{1}{\il{x}}(y, \theta)[\de]$
is given by
\begin{align*}
  \il{\Dd}_x &= \il{\Gamma}_a(\il{\Dd}_a)\\
  \qwithq
  \ill{\Dd}_a
  &= \il{\Psi_a}( \il{\Dd}_a )
  + \il{\Psi_y}( \de ),
\end{align*}
where $\il{\Dd}_a = \JACs{1}{\il{a}}(y, \theta)[\de]$ and
we have defined the following linear mappings
\begin{align*}
  \il{\Gamma}_a(\cdot) &= \JACs{1}{\gamma}( \il{a})[\cdot],\\
  \il{\Psi_a}(\cdot) &= \JACs{1}{\psi}( \il{a}, y, \theta)[\cdot],\\
  \il{\Psi_y}(\cdot) &= \JACs{2}{\psi}( \il{a}, y, \theta)[\cdot].
\end{align*}

\end{prop}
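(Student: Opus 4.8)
The plan is to proceed by a simple induction on the iteration counter $\ell$, applying the chain rule for weak derivatives (justified under the Lipschitz hypotheses assumed just above the statement) to each line of the recursion~\eqref{eq-iterations}. The iterative scheme has the structure $\il{x} = \gamma(\il{a})$ and $\ill{a} = \psi(\il{a}, y, \theta)$, so the directional derivatives of $\il{x}$ with respect to $y$ are entirely governed by those of the auxiliary sequence $\il{a}$. First I would fix the direction $\delta \in \Xx$ and differentiate the update $\ill{a} = \psi(\il{a}, y, \theta)$ with respect to $y$ in the direction $\delta$. Since $\psi$ depends on both $\il{a}$ (which itself depends on $y$) and on $y$ directly, the chain rule produces two contributions: the term coming from the dependence of $\il{a}$ on $y$, namely $\JACs{1}{\psi}(\il{a}, y, \theta)[\il{\Dd}_a] = \il{\Psi_a}(\il{\Dd}_a)$, and the term coming from the explicit dependence of $\psi$ on its second argument, namely $\JACs{2}{\psi}(\il{a}, y, \theta)[\delta] = \il{\Psi_y}(\delta)$. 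Summing these yields exactly the claimed recursion $\ill{\Dd}_a = \il{\Psi_a}(\il{\Dd}_a) + \il{\Psi_y}(\delta)$.

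Next I would handle the output line $\il{x} = \gamma(\il{a})$. Differentiating with respect to $y$ in direction $\delta$, and using that $\il{a}$ carries all the $y$-dependence while $\gamma$ depends only on its single argument, the chain rule gives directly $\il{\Dd}_x = \JACs{1}{\gamma}(\il{a})[\il{\Dd}_a] = \il{\Gamma}_a(\il{\Dd}_a)$. The base case is immediate: the initialization $a^{(0)}$ is a fixed (Lipschitz, here constant) point independent of $y$, so $\Dd_a^{(0)} = 0$, which anchors the induction. The linearity of the maps $\il{\Gamma}_a$, $\il{\Psi_a}$, $\il{\Psi_y}$ is inherited from the fact that weak Jacobians act as linear operators on direction vectors, so the directional-derivative identities hold for all $\delta$ simultaneously, consistent with their stated definitions.

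The only genuine subtlety — rather than an obstacle — is the justification that the chain rule may be applied in the weak sense at each step. This is precisely what the preceding paragraph has set up: $\gamma$ and $\psi$ (in its relevant arguments) are $1$-Lipschitz, and the initialization is Lipschitz, so by induction $y \mapsto \il{x}(y, \theta)$ is Lipschitz, and the composition of Lipschitz maps may be differentiated in the weak sense via \cite[Theorem~4 and Remark, Section~4.2.2]{EvansGariepy92}. I would invoke this theorem to guarantee both that the weak derivatives $\JACs{1}{\il{a}}(y,\theta)[\delta]$ and $\JACs{1}{\il{x}}(y,\theta)[\delta]$ exist Lebesgue-a.e. and that the chain-rule identities hold a.e. With the base case and the two differentiation steps in hand, the induction closes and the proposition follows. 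The statement is essentially a bookkeeping consequence of the chain rule, so the argument is short once the weak-differentiability framework is granted.
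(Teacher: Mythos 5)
Your proposal is correct and follows exactly the paper's argument: the proof is an application of the chain rule for weak derivatives of compositions of Lipschitz maps, invoking \cite[Theorem~4 and Remark, Section~4.2.2]{EvansGariepy92} under the Lipschitz hypotheses on $\gamma$, $\psi$ and the initialization. The paper states this in a single sentence; your version simply spells out the induction and the two chain-rule contributions in more detail.
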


Plugging $\JACs{2}{\il{x}}(y, \theta)[\de]$ in \eqref{eq-randomized-df}, and in turn in \eqref{eq:sure}, gives iteratively an unbiased\footnote{Expectation is to be taken here with respect to both the Gaussian measure of the noise $W$ and the direction~$\Delta$.} estimate of the risk at the current iterate $\il{x}(y, \theta)$. The whole procedure is summarized in Fig.~\ref{fig:pseudo-code-gsure}. It is worth point out that although estimating the risk entails additional operations, the global complexity is the same as for the original iterative algorithm without risk estimation.

\subsection{Computation of $\bm \ASUGARAMCn$ for Risk Optimization}

We now focus on the computation of the weak Jacobian $\JACs{2}{\il{x}}(y, \theta)$.
Unlike for risk estimation that required only weak directional derivatives,
for risk optimization we need the full weak Jacobian matrix
$\JACs{2}{x}(y, \theta) \in \RR^{\dim(\Theta) \times N}$.
The proposed strategy, known as the forward accumulation, is one
of the possible strategies to iteratively evaluate the derivatives
by the use of the chain rule. The reverse accumulation is another
strategy that does not require computing the full Jacobian
matrix at the expense of a large memory load with respect to
the number of iterations. Between these two extreme approaches, they
are several hybrid strategies that can also be considered, knowing, that
finding the optimal Jacobian accumulation strategy is
an NP-complete problem.
Such strategies have been studied in the field of
``automatic differentiation'' and the reader is invited to refer to
\cite{griewank2008evaluating,naumann2008optimal} for a comprehensive account of these approaches.

In our case, we consider that, unlike for $\JACs{1}{\il{x}}(y, \theta)$,
the matrix $\JACs{2}{x}(y, \theta)$ is in practice quite small
since $\dim(\Theta) \ll P$, hence implying only a memory load overhead of small fraction of $P$. 
Hence following the forward accumulation strategy, we propose
a practical way to compute iteratively the full weak Jacobian matrix
$\JACs{2}{x}(y, \theta)$.

%Note that due to the small dimension of $\Theta$,
%evaluating this Jacobian matrix with Monte-Carlo
%simulations would not be a good strategy since it would require
%generating a large amount of random directions in $\Theta$.

The next result describes an iterative scheme to compute
$\JACs{2}{\il{x}}(y, \theta)$.

\begin{prop}\label{prop:gen_diff_theta}
The weak Jacobian $\il{\Jj}_x = \JACs{2}{\il{x}}(y, \theta)$
is given by
\begin{align*}
  \il{\Jj}_x &= \il{\Gamma}_a(\il{\Jj_a})\\
  \qwithq
  \ill{\Jj_a}
  &= \il{\Psi_a}( \il{\Jj_a} )
  + \il{\Psi_\theta},
\end{align*}
where $\il{\Jj_a} = \JACs{2}{\il{a}}(y, \theta)$ and
we have defined
\begin{align*}
  \il{\Gamma}_a(\cdot) &= \JACs{1}{\gamma}( \il{a})[\cdot],\\
  \il{\Psi_a}(\cdot) &= \JACs{1}{\psi}( \il{a}, y, \theta)[\cdot],\\
  \il{\Psi_\theta} &= \JACs{3}{\psi}( \il{a}, y, \theta).
\end{align*}

\end{prop}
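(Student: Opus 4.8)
The plan is to establish the two identities by induction on the iteration counter $\ell$, the engine being the chain rule for weak derivatives applied to the scheme \eqref{eq-iterations}; this mirrors the argument behind Proposition~\ref{prop:gen_diff_y}, the only change being that we differentiate with respect to the third argument $\theta$ rather than along a direction in $y$. As emphasized in the preceding subsection, under the standing Lipschitz hypotheses on $\gamma$, $\psi$ and the initialization, the maps $\theta \mapsto \il{a}(y,\theta)$ and $\theta \mapsto \il{x}(y,\theta)$ are Lipschitz for every $\ell$, hence weakly differentiable, so that $\JACs{2}{\il{a}}(y,\theta)$ and $\JACs{2}{\il{x}}(y,\theta)$ exist Lebesgue-a.e.\ and the chain rule for Lipschitz mappings \cite[Theorem~4 and Remark, Section~4.2.2]{EvansGariepy92} is legitimate. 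For the base case, the initialization $a^{(0)}$ is chosen independent of $\theta$ (e.g.~$a^{(0)}=0$), so that $\JACs{2}{a^{(0)}}(y,\theta)=0$, in agreement with the initialization $\Jj_a^{(0)} \leftarrow 0$ in Fig.~\ref{fig:pseudo-code-sugar}.

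For the inductive step, suppose $\il{\Jj_a} = \JACs{2}{\il{a}}(y,\theta)$ holds a.e. Differentiating the update $\ill{a} = \psi(\il{a}, y, \theta)$ with respect to $\theta$, and observing that $\theta$ enters $\psi$ both implicitly, through the first argument $\il{a}=\il{a}(y,\theta)$, and explicitly, through the third argument, while the second argument $y$ is held fixed and contributes nothing, the chain rule yields, a.e.,
\begin{align*}
  \JACs{2}{\ill{a}}(y,\theta) = \JACs{1}{\psi}(\il{a}, y, \theta)\big[\JACs{2}{\il{a}}(y,\theta)\big] + \JACs{3}{\psi}(\il{a}, y, \theta).
\end{align*}
With the notation of the statement this is exactly $\ill{\Jj_a} = \il{\Psi_a}(\il{\Jj_a}) + \il{\Psi_\theta}$. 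Differentiating $\il{x} = \gamma(\il{a})$ with respect to $\theta$ and applying the chain rule once more gives $\JACs{2}{\il{x}}(y,\theta) = \JACs{1}{\gamma}(\il{a})\big[\JACs{2}{\il{a}}(y,\theta)\big]$, that is $\il{\Jj}_x = \il{\Gamma}_a(\il{\Jj_a})$. This closes the induction and establishes both relations.

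The main obstacle is the rigorous justification of the chain rule in the weak sense, rather than the bookkeeping above. The difficulty is that the composition of weakly differentiable functions need not be weakly differentiable in general, and even when it is, the naive identity $D(f\circ g)=(Df\circ g)\,Dg$ can fail on the preimage under $g$ of the (Lebesgue-null) set where the outer map $f$ is non-differentiable, a preimage that may carry positive measure. This is precisely why we restrict to Lipschitz iteration operators: for Lipschitz $\gamma$ and $\psi$ the compositions remain Lipschitz, hence differentiable a.e.\ by Rademacher's theorem, and the a.e.\ validity of the chain rule is granted by \cite[Theorem~4 and Remark, Section~4.2.2]{EvansGariepy92}. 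In the concrete instances of Section~\ref{sec:formal_diff}, where $\gamma$ and $\psi$ are built from linear maps and proximity operators (e.g.~soft-thresholding), the non-differentiability sets are negligible and explicitly controlled, so these a.e.\ identities are well defined and the recursions propagate consistently from the initialization.
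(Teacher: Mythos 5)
Your proposal is correct and follows essentially the same route as the paper: both rest on the Lipschitz continuity of $\gamma$, $\psi$ and the initialization in $\theta$, and apply the chain rule for Lipschitz mappings \cite[Theorem~4 and Remark, Section~4.2.2]{EvansGariepy92} recursively through the iterates. Your version merely makes explicit the induction and the reason the Lipschitz hypothesis is needed for the weak chain rule, which the paper leaves implicit.
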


Plugging $\JACs{2}{\il{x}}(y, \theta)$ in the expression of $\ASUGARAMCn$ given by Proposition \ref{prop:sugar} provides iteratively an asymptotically (see Theorem~\ref{thm:sugar}) unbiased estimate of the gradient of the risk at the current iterate $\il{x}(y, \theta)$.
The main steps of the procedure are summarized in Fig.~\ref{fig:pseudo-code-sugar}.
The estimation of the gradient of the risk entails only a small computational overhead
compared to the risk estimation approach of \cite{ramani2008montecarlosure}.
Their respective complexity remains however the same.\\

Note finally that in both schemes, another initialization than $a^{(0)} = 0$ can be chosen, for instance depending on $y$ and $\theta$, in which case the respective derivatives require to be initialized accordingly.\\

%The functional $E$ we are interested in are typically not $C^1$. The remaining sections of this paper develops more advanced proximal splitting schemes to define the iterations.
The following sections are devoted to instantiate this approach to more specific iterative algorithms that are able to handle non-smooth convex objective functions $E$.

\subsection{Application to Generalized Forward Backward Splitting}

The Generalized Forward Backward (GFB) splitting~\cite{raguet-gfb}
allows one to find one element belonging to the set $x(y, \theta)$
solution of the structured convex optimization problem
\eql{\label{eq-min-gfb}
  x(y, \theta) = \uArgmin{x \in \Xx} \left\{E(x,y, \theta) =  F(x,y,\theta) + \sum_{k=1}^Q G_k(x,y, \theta)\right\}
}
under the assumptions that all functions are proper, closed and convex, $F$ is $C^1(\Xx)$ with $L$-Lipschitz continuous gradient, and the $G_k$ functions are simple, in the sense their proximity operator can be computed in closed form (e.g.~the $\ell_1$ norm is simple since its proximal operator is explicitly the soft-thresholding). Recall that the proximal mapping of a proper closed convex function $G$ is defined as
\[
\Prox_{G}: x \in \Xx \mapsto \uargmin{z \in \Xx} \frac{1}{2}\norm{z - x}^2 + G(z) ~.
\]
It is uniquely valued and non-expansive (i.e.,
$\norm{\Prox_{G}(x_1) - \Prox_{G}(x_2)} \leq \norm{x_1 - x_2}$ for any $x_1, x_2 \in \Xx$),
in fact even firmly so.

The GFB implements iteration \eqref{eq-iterations} with
$\il{a} = (\il{\xi}, \il{z_1}, \ldots, \il{z_Q}) \in \Aa = \Xx^{1+Q}$,
$\il{x} = \gamma(\il{a}) = \il{\xi}$
and $\ill{a} = \psi(\il{a}, y, \theta)$ chosen such that for all $k=1,\ldots,Q$,
\begin{align*}
  \begin{array}{rlll}
  &
  \ill{x} &=& \frac{1}{Q} \sum_{k=1}^Q \ill{z_k}\\
  \qandq &
  \ill{z_k} &=& \il{z_k}  - \il{x} + \Prox_{\nu Q G_k}( \il{\Zz}_k, y, \theta )\\
  \qwithq&
  \il{\Zz}_k &=& 2 \il{x} - \il{z_k} - \nu \nabla_1 F(\il{x}, y, \theta ) ~.
  \end{array}
\end{align*}
With the parameter $\nu \in ]0, 2/L[$,
the sequence of iterates $\il{x}$ is provably guaranteed to converge to a minimizer $x(y, \theta)$ of \eqref{eq-min-gfb}.
One recovers as a special cases the Forward-Backward splitting~\cite{CombettesWajs05} when $Q=1$ and the Douglas-Rachford splitting~\cite{Combettes2007a} when $F=0$.
%Please note that in this algorithm, the iterates $\il{a} = (\il{x}, \il{z_1}, \ldots, \il{z_Q})$ are actually functions of the observations $y$.

\begin{cor}\label{cor:gfb_diff_y}
For any vector $\delta \in \Xx$, the GFB weak directional derivatives $\il{\Dd}_x = \il{\Gamma_a}(\il{\Dd}_a)$ and $\ill{\Dd}_a = \il{\Psi_a}( \il{\Dd}_a ) + \il{\Psi_y}( \de )$ are
computed by evaluating iteratively
\begin{align*}
  \begin{array}{rlll}
  &
  \ill{\Dd_x} &=& \frac{1}{Q} \sum_{k=1}^Q \ill{\Dd_{z_k}}\\
  \qandq &
  \ill{\Dd_{z_k}} &=& \il{\Dd_{z_k}}  - \il{\Dd_x} + \il{\Gg_{k,x}}( \il{\Dd_{\Zz_k}} ) +  \il{\Gg_{k,y}} (\de)\\
  \qwithq &
  \il{\Dd_{\Zz_k}} &=& 2 \il{\Dd_x} - \il{\Dd_{z_k}} - \nu ( \il{\Ff_x}(\il{\Dd_{x}}) + \il{\Ff_y}(\de) ) ~,
  \end{array}
\end{align*}
where we have defined the following linear mappings
\begin{align*}
  \begin{array}{rlll}
  & \il{\Gg_{k,x}}(\cdot) &=& \JAC{1}{\Prox_{\nu Q G_k}}( \il{\Zz_k}, y, \theta )[\cdot],\\
  & \il{\Gg_{k,y}}(\cdot) &=& \JAC{2}{\Prox_{\nu Q G_k}}( \il{\Zz_k}, y, \theta )[\cdot],\\
  & \il{\Ff_x}(\cdot) &=& \JAC{1}{\nabla_1 F} ( \il{x}, y, \theta )[\cdot]\\
  \qandq &
  \il{\Ff_y}(\cdot) &=& \JAC{2}{\nabla_1 F} ( \il{x}, y, \theta )[\cdot] ~.
  \end{array}
\end{align*}
\end{cor}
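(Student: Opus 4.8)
The plan is to recognize Corollary~\ref{cor:gfb_diff_y} as a direct instantiation of Proposition~\ref{prop:gen_diff_y} to the GFB scheme, so the work reduces to identifying the fixed-point operator $\psi$ and the output map $\gamma$, and then computing the three linear mappings $\il{\Gamma}_a$, $\il{\Psi_a}$ and $\il{\Psi_y}$ explicitly. For GFB the auxiliary state is $\il{a} = (\il{\xi}, \il{z_1}, \ldots, \il{z_Q})$, the map $\gamma$ returns the primal variable $\il{x}$, and $\psi$ is a composition of affine operations with the proximal mappings $\Prox_{\nu Q G_k}$ and the gradient $\nabla_1 F$. Before invoking any chain rule, I would first record that each building block is Lipschitz in $y$: the proximal maps are non-expansive (hence $1$-Lipschitz), $\nabla_1 F$ is $L$-Lipschitz by assumption, and affine combinations preserve Lipschitz continuity. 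Starting from a Lipschitz initialization, an induction on $\ell$ then shows that $y \mapsto \il{a}(y,\theta)$ and $y \mapsto \il{x}(y,\theta)$ are Lipschitz, which guarantees weak differentiability of the iterates and licenses the chain rule for Lipschitz maps~\cite[Theorem~4, Section~4.2.2]{EvansGariepy92}, exactly as in the preamble to Proposition~\ref{prop:gen_diff_y}.

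The core of the argument is then a line-by-line differentiation of the three GFB updates with respect to $y$ in the direction $\de$. Writing $\il{\Dd_x} = \JACs{1}{\il{x}}(y,\theta)[\de]$, $\il{\Dd_{z_k}} = \JACs{1}{\il{z_k}}(y,\theta)[\de]$ and $\il{\Dd_{\Zz_k}} = \JACs{1}{\il{\Zz_k}}(y,\theta)[\de]$, the averaging step $\ill{x} = \frac{1}{Q}\sum_{k=1}^Q \ill{z_k}$ is linear and differentiates immediately to $\ill{\Dd_x} = \frac{1}{Q}\sum_{k=1}^Q \ill{\Dd_{z_k}}$. Differentiating $\ill{z_k} = \il{z_k} - \il{x} + \Prox_{\nu Q G_k}(\il{\Zz_k}, y, \theta)$ requires the chain rule on the proximal map, which depends on $y$ both through its first argument $\il{\Zz_k}$ and through its explicit second argument, producing the two contributions $\il{\Gg_{k,x}}(\il{\Dd_{\Zz_k}})$ and $\il{\Gg_{k,y}}(\de)$. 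Finally, differentiating the reflection step $\il{\Zz_k} = 2\il{x} - \il{z_k} - \nu \nabla_1 F(\il{x}, y, \theta)$ yields $\il{\Dd_{\Zz_k}} = 2\il{\Dd_x} - \il{\Dd_{z_k}} - \nu(\il{\Ff_x}(\il{\Dd_x}) + \il{\Ff_y}(\de))$, where $\nabla_1 F$ is again differentiated through both its primal and its $y$ argument. Collecting these three relations is precisely the stated recursion, and reading off $\il{\Dd_x} = \il{\Gamma_a}(\il{\Dd}_a)$ from the coordinate-extraction map $\gamma$ matches the Proposition~\ref{prop:gen_diff_y} template.

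The only genuinely delicate point is the weak-sense justification of the chain rule at the non-smooth proximal steps: the weak Jacobians $\il{\Gg_{k,x}}$ and $\il{\Gg_{k,y}}$ of $\Prox_{\nu Q G_k}$ exist only Lebesgue-a.e., so one must verify that composing them along the iteration preserves this a.e.\ validity rather than accumulating null sets uncontrollably. This is handled, as discussed before Proposition~\ref{prop:gen_diff_y}, by the chain rule for compositions of Lipschitz maps~\cite[Theorem~4 and Remark, Section~4.2.2]{EvansGariepy92}, which applies precisely because every intermediate map $\il{a} \mapsto \ill{a}$ is Lipschitz. Once this is in place, the remaining steps are routine and amount to matching the notation of Proposition~\ref{prop:gen_diff_y} to the concrete GFB operators.
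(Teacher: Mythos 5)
Your proposal is correct and follows essentially the same route as the paper: identify the GFB auxiliary state, invoke the chain rule for Lipschitz mappings from \cite[Theorem~4 and Remark, Section~4.2.2]{EvansGariepy92} on each update line, and read off $\il{\Dd_x} = \il{\Gamma_a}(\il{\Dd}_a) = \il{\Dd_{\xi}}$. The paper's proof is just a terser version of your line-by-line differentiation, so no further comparison is needed.
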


\begin{cor}\label{cor:gfb_diff_theta}
The GFB weak Jacobian $\il{\Jj}_x = \il{\Gamma_a}(\il{\Jj}_a)$, where $\ill{\Jj}_a = \il{\Psi_a}( \il{\Jj}_a ) + \il{\Psi_\theta}$,
is computed by evaluating iteratively
\begin{align*}
  \begin{array}{rlll}
  &
  \ill{\Jj_x} &=& \frac{1}{Q} \sum_{k=1}^Q \ill{\Jj_{z_k}}\\
  \qandq &
  \ill{\Jj_{z_k}} &=& \il{\Jj_{z_k}}  - \il{\Jj_x} + \il{\Gg_{k,x}}( \il{\Jj_{\Zz_k}} ) +  \il{\Gg_{k,\theta}}\\
  \qwithq &
  \il{\Jj_{\Zz_k}} &=& 2 \il{\Jj_x} - \il{\Jj_{z_k}} - \nu (\il{\Ff_x}(\il{\Jj_{x}}) + \il{\Ff_\theta}) ~,
  \end{array}
\end{align*}
where we have defined
\begin{align*}
  \begin{array}{rlll}
  & \il{\Gg_{k,x}}(\cdot) &=& \JAC{1}{\Prox_{\nu Q G_k}}( \il{\Zz_k}, y, \theta )[\cdot],\\
  & \il{\Gg_{k,\theta}} &=& \JAC{3}{\Prox_{\nu Q G_k}}( \il{\Zz_k}, y, \theta ),\\
  & \il{\Ff_x}(\cdot) &=& \JAC{1}{\nabla_1 F} ( \il{x}, y, \theta )[\cdot]\\
  \qandq &
  \il{\Ff_\theta} &=& \JAC{3}{\nabla_1 F} ( \il{x}, y, \theta )~.
  \end{array}
\end{align*}
\end{cor}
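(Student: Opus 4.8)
The plan is to read Corollary~\ref{cor:gfb_diff_theta} as a direct specialization of Proposition~\ref{prop:gen_diff_theta} to the GFB scheme, so the work reduces to identifying the ingredients $\gamma$, $\psi$ and the auxiliary state, checking that the hypotheses of that proposition are met, and then computing the relevant partial weak Jacobians by bookkeeping of the chain rule. Here the auxiliary state is $\il{a} = (\il{x}, \il{z_1}, \ldots, \il{z_Q}) \in \Aa = \Xx^{1+Q}$, the output map $\gamma(\il{a}) = \il{x}$ is the linear coordinate selection (so its weak Jacobian $\il{\Gamma_a}$ merely extracts the $x$-block, giving $\il{\Jj}_x = \il{\Gamma_a}(\il{\Jj_a}) = \il{\Jj_x}$), and $\psi$ is the update defined by the three displayed relations. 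First I would verify the hypotheses enabling Proposition~\ref{prop:gen_diff_theta}: $\gamma$ is linear hence $C^1$; each $\Prox_{\nu Q G_k}$ is non-expansive in its first argument, and (under Assumption~\ref{assump:L2}) Lipschitz in $\theta$; and $\nabla_1 F$ is $L$-Lipschitz by the standing hypothesis on $F$. Combining these affinely, $\psi$ is Lipschitz in $(\il{a}, y, \theta)$, so the chain rule for Lipschitz maps \cite[Theorem~4 and Remark, Section~4.2.2]{EvansGariepy92} applies and every weak Jacobian below is well defined Lebesgue-a.e.

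Next I would unroll the abstract recursion $\ill{\Jj_a} = \il{\Psi_a}(\il{\Jj_a}) + \il{\Psi_\theta}$ componentwise, tracking the dependence through the intermediate variable $\il{\Zz_k} = 2\il{x} - \il{z_k} - \nu \nabla_1 F(\il{x}, y, \theta)$. Differentiating $\il{\Zz_k}$ with respect to $\theta$ (with $y$ held fixed, so its contribution vanishes) and applying the chain rule to the $F$-term gives $\il{\Jj_{\Zz_k}} = 2\il{\Jj_x} - \il{\Jj_{z_k}} - \nu(\il{\Ff_x}(\il{\Jj_x}) + \il{\Ff_\theta})$, which is exactly the third displayed relation once $\il{\Ff_x} = \JAC{1}{\nabla_1 F}(\il{x}, y, \theta)[\cdot]$ and $\il{\Ff_\theta} = \JAC{3}{\nabla_1 F}(\il{x}, y, \theta)$ are identified as the partial weak Jacobians of $\nabla_1 F$ in its first and third arguments. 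Then I would differentiate the $z_k$-update $\ill{z_k} = \il{z_k} - \il{x} + \Prox_{\nu Q G_k}(\il{\Zz_k}, y, \theta)$: the affine part yields $\il{\Jj_{z_k}} - \il{\Jj_x}$, and the chain rule on the proximity term yields $\il{\Gg_{k,x}}(\il{\Jj_{\Zz_k}}) + \il{\Gg_{k,\theta}}$, with $\il{\Gg_{k,x}} = \JAC{1}{\Prox_{\nu Q G_k}}(\il{\Zz_k}, y, \theta)[\cdot]$ and $\il{\Gg_{k,\theta}} = \JAC{3}{\Prox_{\nu Q G_k}}(\il{\Zz_k}, y, \theta)$, the $y$-contribution again vanishing. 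This reproduces the second displayed relation, and differentiating the averaging step $\ill{x} = \frac{1}{Q}\sum_{k=1}^Q \ill{z_k}$ immediately gives $\ill{\Jj_x} = \frac{1}{Q}\sum_{k=1}^Q \ill{\Jj_{z_k}}$.

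The only delicate point is the legitimacy of the chain rule through the non-smooth operators $\Prox_{\nu Q G_k}$, which are merely Lipschitz rather than $C^1$; this is precisely where the Lipschitz hypotheses and the composition result of Evans--Gariepy are required, and it is exactly the content already established for the abstract scheme in Proposition~\ref{prop:gen_diff_theta}. Consequently, once the GFB identification is in place, the corollary reduces to the routine bookkeeping sketched above, and no additional analytic difficulty arises.
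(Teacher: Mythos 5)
Your proposal is correct and follows essentially the same route as the paper: the paper's proof likewise decomposes the auxiliary state $\il{a}$ into its components, invokes the chain rule for Lipschitz mappings from Evans--Gariepy, and uses the identification $\il{\Jj_x} = \il{\Gamma_a}(\il{\Jj_a}) = \il{\Jj_\xi}$ to read off the componentwise recursion. You simply carry out the bookkeeping more explicitly than the paper, which delegates it to ``arguing as in the proof of Corollary~\ref{cor:gfb_diff_y}''.
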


\subsection{Application to Primal-dual Splitting}

%\GP{There is a clash of notation between the $\si$ parameter bellow and the noise level. }

Proximal splitting schemes can be used to find an element of the set $x(y, \theta)$
defined as the solution of the large class of variational problems
\begin{align}\label{eq-min-cp}
	x(y, \theta) = \uArgmin{x \in \Xx} \left\{E(x,y, \theta) = H(x,y,\theta) + G(K(x),y,\theta)\right\} ~,
\end{align}
where both $x \mapsto H(x,y,\theta)$ and $u \mapsto G(u,y,\theta)$ are proper closed convex and simple functions, and $K : \Xx \to \Uu$ is a bounded linear operator.

The primal-dual\footnote{We invite the interested reader to consult \cite{komodakis2014playing} for a detailed review on primal-dual algorithms.} relaxed Arrow-Hurwicz algorithm as revitalized recently in \cite{chambolle2011first} (that we coin CP) to solve \eqref{eq-min-cp} implements \eqref{eq-iterations} with $\il{a} = (\il{\xi}, \il{\tilde x}, \il{u}) \in \Aa = \Xx^2 \times \Uu$,
$\il{x} = \gamma(\il{a}) = \il{\xi}$
and $\ill{a} = \psi(\il{a}, y, \theta)$ such that
\begin{align}
  \label{eq:primaldual}
  \begin{split}
    \ill{u} &= \Prox_{\tau G^*}( \il{U}, y, \theta )
    \qwhereq \il{U} = \il{u} + \tau K(\il{\tilde x}),  \\
    \ill{x} &= \Prox_{\xi H}(\il{X}, y, \theta)
    \qwhereq \il{X} = \il{x} - \xi K^*(\ill{u}) ,  \\
    \ill{\tilde x} &= \ill{x} + \zeta ( \ill{x} - \il{x} ).
  \end{split}
\end{align}
where the Legendre-Fenchel conjugate of $G$ is defined as $G^*(u,y,\tau) = \max_{z} \dotp{z}{u} - G(z,y,\tau)$, and its proximity operator is
given by Moreau's identity as
\begin{align*}
  \Prox_{\tau G^*}(u,y) = u - \tau \Prox_{G/\tau}(u/\tau,y)~.
\end{align*}
The parameters $\tau>0, \xi>0$ are chosen such that $\tau\xi \norm{K}^2<1$, and $\zeta \in [0,1]$ to ensure provable convergence of $\il{x}$ toward an element in the set $x(y,\theta)$ of \eqref{eq-min-cp}. $\zeta\!=\!0$ corresponds to the Arrow-Hurwitz algorithm, and for $\zeta\!=\!1$, a sublinear $O(1/\ell)$ convergence rate on the partial duality gap was established in \cite{chambolle2011first}.

\begin{cor}\label{cor:pds_diff_y}
For any vector $\delta \in \Xx$, the CP weak directional derivatives $\il{\Dd}_x = \il{\Gamma_a}(\il{\Dd}_a)$ and $\ill{\Dd}_a = \il{\Psi_a}( \il{\Dd}_a ) + \il{\Psi_y}( \de )$ are computed by evaluating iteratively
\begin{align*}
  \begin{array}{rlll}
    & \ill{\Dd_{u}} &=& \il{\Gg_u}( \il{\Dd_{U}} ) + \il{\Gg_y}(\de)
    \qwhereq
    \il{\Dd_{U}} = \il{\Dd_{u}} + \tau K(\il{\Dd_{\tilde x}}),\\
    & \ill{\Dd_x} &=& \il{\Hh_{x}}( \il{\Dd_{X}} ) + \il{\Hh_{y}} (\de)
    \qwhereq
    \il{\Dd_{X}} = \il{\Dd_{x}} - \xi K^*(\ill{\Dd_{u}}),\\
    \qandq &
    \ill{\Dd_{\tilde x}} &=& \ill{\Dd_x} + \zeta ( \ill{\Dd_x} - \il{\Dd_x} )
    \end{array}
\end{align*}
where we have defined the following linear mappings
\begin{align*}
  \begin{array}{rlll}
  & \il{\Hh_{x}}(\cdot) &=& \JAC{1}{\Prox_{\xi H}}( \il{X}, y, \theta )[\cdot],\\
  & \il{\Hh_{y}}(\cdot) &=& \JAC{2}{\Prox_{\xi H}}( \il{X}, y, \theta )[\cdot],\\
    & \il{\Gg_{u}}(\cdot) &=& \JAC{1}{\Prox_{\tau G^*}}( \il{U}, y, \theta )[\cdot]\\
  \qandq &
  \il{\Gg_{y}}(\cdot) &=& \JAC{2}{\Prox_{\tau G^*}}( \il{U}, y, \theta )[\cdot]~.
  \end{array}
\end{align*}
\end{cor}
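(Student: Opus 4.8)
The plan is to obtain Corollary~\ref{cor:pds_diff_y} as a direct instantiation of Proposition~\ref{prop:gen_diff_y} to the particular structure of the relaxed Arrow--Hurwicz (CP) recursion \eqref{eq:primaldual}. Here the auxiliary variable is $\il{a} = (\il{\xi}, \il{\tilde x}, \il{u})$, the output map $\gamma$ merely extracts the primal component $\il{\xi} = \il{x}$, and the fixed-point operator $\psi$ is the composition of the three updates in \eqref{eq:primaldual}. Accordingly, the weak directional derivative $\il{\Dd}_a = \JACs{1}{\il{a}}(y, \theta)[\de]$ carries the three blocks $(\il{\Dd_x}, \il{\Dd_{\tilde x}}, \il{\Dd_u})$, and the abstract recursion $\ill{\Dd}_a = \il{\Psi_a}(\il{\Dd}_a) + \il{\Psi_y}(\de)$ of Proposition~\ref{prop:gen_diff_y} is to be unfolded, via the chain rule, into the three coupled updates stated in the corollary.

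First I would secure weak differentiability of the iterates, the prerequisite for the chain rule to apply. This follows from the Lipschitz induction argument of Section~\ref{sec:formal_diff}: each proximity operator $\Prox_{\tau G^*}$ and $\Prox_{\xi H}$ is non-expansive in its first argument, the operators $K$ and $K^*$ are bounded linear, and the extrapolation step $\ill{\tilde x} = \ill{x} + \zeta(\ill{x} - \il{x})$ is affine. Hence $\psi$ is Lipschitz in $(a,y)$, and starting from a Lipschitz initialization, $y \mapsto \il{a}(y,\theta)$ is Lipschitz by induction, so its weak directional derivative exists Lebesgue-a.e. by \cite[Theorem~4 and Remark, Section~4.2.2]{EvansGariepy92}.

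Once weak differentiability is granted, the core computation amounts to differentiating each line of \eqref{eq:primaldual} with respect to $y$ in the direction $\de$, respecting the sequential order of the updates. For the dual step, $\il{U} = \il{u} + \tau K(\il{\tilde x})$ is affine in $(\il{u}, \il{\tilde x})$, so $\il{\Dd_U} = \il{\Dd_u} + \tau K(\il{\Dd_{\tilde x}})$; applying the chain rule to $\ill{u} = \Prox_{\tau G^*}(\il{U}, y, \theta)$ yields $\ill{\Dd_u} = \il{\Gg_u}(\il{\Dd_U}) + \il{\Gg_y}(\de)$, where $\il{\Gg_u}$ and $\il{\Gg_y}$ are the partial weak Jacobians of $\Prox_{\tau G^*}$ with respect to its first and second arguments. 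The point to watch is that the primal step $\il{X} = \il{x} - \xi K^*(\ill{u})$ depends on the \emph{already-updated} dual variable $\ill{u}$, so its derivative $\il{\Dd_X} = \il{\Dd_x} - \xi K^*(\ill{\Dd_u})$ reuses $\ill{\Dd_u}$ just computed; differentiating $\ill{x} = \Prox_{\xi H}(\il{X}, y, \theta)$ then gives $\ill{\Dd_x} = \il{\Hh_x}(\il{\Dd_X}) + \il{\Hh_y}(\de)$. Finally the affine extrapolation differentiates directly into $\ill{\Dd_{\tilde x}} = \ill{\Dd_x} + \zeta(\ill{\Dd_x} - \il{\Dd_x})$, and the extraction map contributes $\il{\Dd_x} = \il{\Gamma_a}(\il{\Dd_a})$.

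I expect the only genuine obstacle to be the rigorous justification of the chain rule in the weak sense, rather than the algebra, which is mechanical. A composition of Lipschitz maps is again Lipschitz, and the chain rule for such maps holds Lebesgue-a.e. under the cited regularity \cite[Section~4.2.2]{EvansGariepy92}; the care required is to ensure that, for almost every $y$, the intermediate arguments $\il{U}$ and $\il{X}$ land at points where the relevant proximity operators are differentiable, so that the partial Jacobians $\il{\Gg_u}, \il{\Gg_y}, \il{\Hh_x}, \il{\Hh_y}$ are well defined. Everything else reduces to linearity of directional differentiation and careful bookkeeping of the update order.
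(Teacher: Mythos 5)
Your proposal is correct and follows essentially the same route as the paper: the paper's own proof simply identifies the blocks $\il{\Dd}_a = (\il{\Dd_{\xi}}, \il{\Dd_{\tilde x}}, \il{\Dd_{u}})$ of the auxiliary variable and invokes the chain rule for Lipschitz mappings from \cite[Section~4.2.2]{EvansGariepy92}, exactly as you do. Your additional care about the sequential order of the updates (in particular that $\il{\Dd_X}$ reuses the already-computed $\ill{\Dd_u}$) and about where the chain rule holds Lebesgue-a.e.\ is consistent with, and somewhat more explicit than, the paper's terse argument.
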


\begin{cor}\label{cor:pds_diff_theta}
Similarly to Corollary \ref{cor:pds_diff_y},
the CP weak Jacobians $\il{\Jj}_x = \il{\Gamma_a}(\il{\Jj}_a)$ and $\ill{\Jj}_a = \il{\Psi_a}( \il{\Jj}_a ) + \il{\Psi_\theta}$ are
computed by evaluating iteratively
\begin{align*}
  \begin{array}{rlll}
    & \ill{\Jj_{u}} &=& \il{\Gg_u}( \il{\Jj_{U}} ) + \il{\Gg_\theta}
    \qwhereq
    \il{\Jj_{U}} = \il{\Jj_{u}} + \tau K(\il{\Jj_{\tilde x}}),\\
    & \ill{\Jj_x} &=& \il{\Hh_{x}}( \il{\Jj_{X}} ) + \il{\Hh_{\theta}}
    \qwhereq
    \il{\Jj_{X}} = \il{\Jj_{x}} - \xi K^*(\ill{\Jj_{u}}),\\
    \qandq &
    \ill{\Jj_{\tilde x}} &=& \ill{\Jj_x} + \zeta ( \ill{\Jj_x} - \il{\Jj_x} )
    \end{array}
\end{align*}
where we have defined
\begin{align*}
  \begin{array}{rlll}
  & \il{\Hh_{x}}(\cdot) &=& \JAC{1}{\Prox_{\xi H}}( \il{X}, y, \theta )[\cdot],\\
  & \il{\Hh_{\theta}} &=& \JAC{3}{\Prox_{\xi H}}( \il{X}, y, \theta ),\\
  & \il{\Gg_{u}}(\cdot) &=& \JAC{1}{\Prox_{\tau G^*}}( \il{U}, y, \theta )[\cdot]\\
  \qandq &
  \il{\Gg_{\theta}} &=& \JAC{3}{\Prox_{\tau G^*}}( \il{U}, y, \theta ).
  \end{array}
\end{align*}
\end{cor}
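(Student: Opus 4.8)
The plan is to obtain the claim as the specialization of Proposition~\ref{prop:gen_diff_theta} to the fixed-point operator $\psi$ defined by the CP recursion~\eqref{eq:primaldual}. Since the output map is $\gamma(\il{a}) = \il{\xi} = \il{x}$, i.e.~the projection onto the primal block of the state $\il{a} = (\il{\xi}, \il{\tilde x}, \il{u})$, its weak Jacobian $\il{\Gamma_a}$ is merely the corresponding coordinate selection, so the relation $\il{\Jj}_x = \il{\Gamma_a}(\il{\Jj}_a)$ only reads off the primal component of the Jacobian of the state. It therefore remains to differentiate with respect to $\theta$ the three update equations of~\eqref{eq:primaldual} and to collect the resulting recursions on $(\il{\Jj_u}, \il{\Jj_x}, \il{\Jj_{\tilde x}})$.

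First I would differentiate the dual update $\ill{u} = \Prox_{\tau G^*}(\il{U}, y, \theta)$ with $\il{U} = \il{u} + \tau K(\il{\tilde x})$. As $K$ is bounded linear and the prox is weakly differentiable, the chain rule yields $\il{\Jj_U} = \il{\Jj_u} + \tau K(\il{\Jj_{\tilde x}})$ and $\ill{\Jj_u} = \il{\Gg_u}(\il{\Jj_U}) + \il{\Gg_\theta}$, where $\il{\Gg_u}$ and $\il{\Gg_\theta}$ are the partial weak Jacobians of $\Prox_{\tau G^*}$ in its first and third arguments. I would then treat the primal update $\ill{x} = \Prox_{\xi H}(\il{X}, y, \theta)$, taking care of its sequential (Gauss--Seidel) dependence $\il{X} = \il{x} - \xi K^*(\ill{u})$ on the freshly computed dual iterate; this gives $\il{\Jj_X} = \il{\Jj_x} - \xi K^*(\ill{\Jj_u})$ and $\ill{\Jj_x} = \il{\Hh_x}(\il{\Jj_X}) + \il{\Hh_\theta}$ with the analogous partial Jacobians of $\Prox_{\xi H}$. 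Finally the extrapolation step $\ill{\tilde x} = \ill{x} + \zeta(\ill{x} - \il{x})$ is affine, so its differentiation is immediate and produces $\ill{\Jj_{\tilde x}} = \ill{\Jj_x} + \zeta(\ill{\Jj_x} - \il{\Jj_x})$. Assembling these three identities is exactly the announced scheme.

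The only genuine difficulty is to justify that the chain rule may be applied at the level of weak, rather than classical, derivatives. This is guaranteed by the standing assumptions of Section~\ref{sec:formal_diff}: the proximity operators are non-expansive in their first argument and Lipschitz in $\theta$, the operators $K$ and $K^*$ are bounded linear, and the initialization is Lipschitz, so that by induction $\theta \mapsto \il{a}(y,\theta)$ is Lipschitz continuous. Consequently the chain rule for compositions of Lipschitz mappings~\cite[Theorem~4 and Remark, Section~4.2.2]{EvansGariepy92} applies, all pointwise equalities holding Lebesgue-a.e. Once this is granted, the corollary follows by forward-mode differentiation through the elementary operations of~\eqref{eq:primaldual}, with no additional estimate required.
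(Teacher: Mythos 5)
Your proposal is correct and follows essentially the same route as the paper's own proof: identify the CP state $\il{a}=(\il{\xi},\il{\tilde x},\il{u})$, note that $\gamma$ is the coordinate projection so $\il{\Jj_x}=\il{\Jj_\xi}$, and apply the chain rule for Lipschitz mappings to the three update equations of \eqref{eq:primaldual} as an instance of Proposition~\ref{prop:gen_diff_theta}. Your treatment is in fact somewhat more explicit than the paper's (which compresses all of this into ``the chain rule completes the proof''), in particular in flagging the Gauss--Seidel dependence of $\il{X}$ on $\ill{u}$, but it is the same argument.
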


Note that the two proximal splitting schemes described here were chosen for their flexibility and the richness of the class of problems they can handle. Obviously, the methodology and discussion extend easily to the reader's favorite proximal splitting algorithm.

\section{Examples and Numerical Results}
\label{sec:numeric}

In this section, we exemplify the use of the
formal differentiation of iterative
proximal splitting algorithms for
three popular variational problems:
nuclear norm regularization,
total-variation regularization and
multi-scale wavelet $\ell_1$-analysis sparsity prior. % and
%block-wise wavelet analysis.
For each of them,
the expressions of all quantities including
the proximal operators and their derivatives
are given in closed-form.
On each problem, we illustrate
the usefulness of our gradient risk estimators
for (multi) continuous parameter optimization.

\subsection{Implementation Details}

All experiments reported below are based on the algorithms
detailed in Figure~\ref{fig:pseudo-code-gsure} and
\ref{fig:pseudo-code-sugar} in conjunction with
proximal splitting algorithms presented in the
previous section. The step of the finite difference
is chosen as $\epsilon = 2 \sigma / P^{0.3}$. %We have chosen this value
%instead of $\epsilon^\star = 0.5 \sigma / P^{0.3}$ (as suggested in Section \ref{sec:stats_fd}) to enforce smoothing and then avoid some local minima,
%while the estimation remains numerically of good quality.
Iterative proximal splitting algorithms
will be used with $\Ll=100$ iterations.
For quasi-Newton optimization, we used the
Broyden-Fletcher-Goldfarb-Shanno (BFGS) method
with the implementation of \cite{lewis2009nonsmooth}.
The use of BFGS is just given as an example to make the most of the proposed gradient estimator of the risk
and seems to be good enough for the cases considered in all the following experiments examples.
Of course, other first-order optimization methods can be considered for just as well,
essentially if the risk presents several local minima.

An important issue in using quasi-Newton optimization
is the choice of the initialization, the initial step and the stopping
criteria. For a variation regularization problem expressed as
%\GP{$\la$ and $\la_k$ should be replaced by $\th$ and $\th_k$}
\begin{align}
  \uArgmin{x} \frac{1}{2}\norm{\Phi x - y}^2 + \sum_{k=1}^K \la^k \Rr^k(x) ~,
\end{align}
where $\lambda^k > 0, ~ \forall k \in \NN$, the initialization $\lambda^k_0$ is chosen empirically as
\begin{align}
  \lambda_0^k
%  = \frac{\EE_W\left[\norm{\Phi x_0 - Y}^2\right]}{4 \Rr^k(x_{\mathrm{LS}}(y))}
  = \frac{P \sigma^2}{4 \sum_{k=1}^K \Rr^k(x_{\mathrm{LS}}(y))} ~,
\end{align}
where $x_{\mathrm{LS}}(y)$ is the least-square estimator.
%such that each regularization term influences the energy.
At the first iteration, the approximate inverse Hessian $B_1$
should be chosen such that, for all $k > 0$,  $\lambda_1^k$ is of the same order as $\lambda_0^k$.
To this end, we suggest initializing $B_1$ as a diagonal matrix with diagonal entries
\begin{align}
  B_1^k = \left|\frac{\alpha \lambda_0^k}{\ASUGARAMC{x}(y , \lambda_0, \delta, \epsilon)_k}\right|
\end{align}
such that, for all $k$,
$\lambda_1^k = (1 \pm \alpha) \lambda_0^k$, where,
in practice, we have chosen $\alpha = 0.9$.
Finally, the BFGS method stops after the following criterion is reached
\begin{align}
  \frac{
    \norm{
    \ASUGARAMC{x}(y , \lambda_n, \delta, \epsilon)
    }_\infty
  }{
    \norm{
    \ASUGARAMC{x}(y , \lambda_0, \delta, \epsilon)
    }_\infty
  }
  \leq \tau
\end{align}
where we have chosen $\tau = 0.02$ meaning that
the algorithm stops if all (weak) partial derivatives are
at least $50$ times lower than the maximal one at initialization.

For the sake of reproducibility, the Matlab scripts implementing
the $\SUREn$ and $\SUGARn$ for the different problems details hereafter are available online
at \url{http://www.math.u-bordeaux1.fr/~cdeledal/sugar.php}.

\subsection{Nuclear Norm Regularization}

%\GP{$\la$ should be replaced by $\th \in \RR^+$}

We consider the recovery of a low-rank matrix $x_0 \in \RR^{n_1 \times n_2}$
from an observation $y \in \RR^P$ of $Y = \Phi x_0 + W$, $W \sim \Nn(0, \sigma^2 \Id_P)$,
where we have identified the matrix space $\RR^{n_1 \times n_2}$
to the vector space $\RR^{N}$ with $N = n_1 n_2$.
To this end, we consider
the following spectral regularization problem
\begin{align}\label{eq-optim-nuclear}
  x^\star(y, \lambda) \in \uArgmin{x} \frac{1}{2}\norm{\Phi x - y}^2 + \la \norm{ x }_* ~,
\end{align}
where $\lambda > 0$ and $\norm{\cdot}_*$ is the nuclear norm (a.k.a., trace for the symmetric semi-definite positive case or Schatten $1$-norm). This is a spectral function defined as the $\ell_1$ norm of the singular values $\La_x \in \RR^{n = \min(n_1, n_2)}$, i.e.
\begin{align*}
  \norm{ x }_* = \norm{\La_x}_1~.
\end{align*}
The nuclear norm is a particular case of spectral regularization that
accounts for prior knowledge on the spectrum of $x$,
typically low-rank (see, e.g., \cite{Fazel02}).
It is the convex hull of the rank function restricted to the unit spectral ball~\cite{CandesCompletion08}.
The parameter $\lambda$ balances the sparsity of the spectrum of the recovered matrix, and the tolerated amount of noise.
%A large value of $\lambda$ yields a matrix with extreme low rank while a smaller value gets closer to a full rank matrix. 
However, except in the random measurements setting, there is no direct relation between $\lambda$ and the rank of $x(y, \lambda)$. The optimal value
of $\lambda$ depends indeed on $x_0$, $\Phi$ and $\sigma$ confirming the importance of automatic selection procedures.

Problem \eqref{eq-optim-nuclear} is a special instance of
\eqref{eq-min-gfb} with the parameter $\la = \theta \in \Theta = \RR^+$,
$Q=1$, and
\begin{align*}
  F(x, y, \la) &= \frac{1}{2}\norm{\Phi x - y}^2\\
  \qandq
  G_1(x,y,\la) &= \lambda \norm{x}_* ~.
\end{align*}
%The proximity operator of $G_1$ amounts to soft-thresholding the singular values.
Hence the GFB algorithm\footnote{which corresponds in this case where $Q=1$ to the forward-backward algorithm.}
can be used to solve \eqref{eq-optim-nuclear} by setting
\begin{align*}
  \nabla_1 F(x,y,\la) &= \Phi^* (\Phi x - y)\\
  \qandq
  \Prox_{\tau G_1}(x, y, \la) &= V_x \diag( \ST(\La_x, \tau \la) ) U_x^*
\end{align*}
where $\diag: \RR^n \to \RR^{n_1 \times n_2}$ maps the entries of
a vector in $\RR^n$ to the main diagonal of a
rectangular matrix in $\RR^{n_1 \times n_2}$ filled with 0 elsewhere,
$(V_x, U_x, \La_x) \in \RR^{n_1 \times n_1} \times \RR^{n_2 \times n_2} \times \RR^{n}$ is the singular value decomposition (SVD) of
$x$ such that $x = V_x \diag(\La_x) U_x^*$ and
$\ST$ is the soft-thresholding operator \eqref{eq:st}.
% for $t \in \RR^{n}$ and $\rho > 0$, by
% \begin{align}
%   \label{eq:soft_thresholding}
%   \ST(t, \rho)_i
%   %= \max(0,1-\rho/\norm{t_i})t_i
%   =
%   \choice{
%     0 & \qifq |t_i| \leq \rho \\
%     t_i-\rho \; \sign(t_i) & \quad\text{otherwise}
%   },
%   \quad
%   \text{for all } 1 \leq i \leq N.
% \end{align}
Corollary~\ref{cor:gfb_diff_y} and \ref{cor:gfb_diff_theta} can then be applied using,
for any $\de_x \in \Xx$ and $\de_y \in \Yy$, the relations
\begin{align*}
  \begin{array}{rlll}
  & \JAC{1}{\nabla_1 F}(x,y,\la)[\de_x] &=& \Phi^* \Phi \de_x,\\
  & \JAC{2}{\nabla_1 F}(x,y,\la)[\de_y] &=& - \Phi^* \de_y,\\
  & \JAC{3}{\nabla_1 F}(x,y,\la) &=& 0
  \\\\[-0.3cm]
  \qandq
  & \JAC{1}{\Prox_{\tau G_1}}( x,y,\la )[\de_x] &=&
  V_x (\Hh(\La_x)[\bar \de_x]
  + \Gamma_S(\La_x)[\bar \de_x]
  + \Gamma_A(\La_x)[\bar \de_x]) U_x^*,\\
  & \JAC{2}{\Prox_{\tau G_1}}( x,y,\la )[\de_y] &=& 0,\\
  & \JAC{3}{\Prox_{\tau G_1}}( x,y,\la ) &=& V_x \diag( \JACs{2}{\ST}(\La_x, \tau \la) ) U_x^*
  \end{array}
\end{align*}
where $\bar \de_x = V_X^* \de_x U_X \in \RR^{n_1 \times n_2}$,
$\Hh(\La_x)$ is defined as
\begin{align*}
  \Hh(\La_x)[\bar \de_x] = \diag(\JACs{1}{\ST}(\La_x, \rho \la)[\diag(\bar \de_x)])
\end{align*}
and $\Ga_S(\La_x)$ and $\Ga_A(\La_x)$ are defined,
for all $1 \leq i \leq n_1$ and $1 \leq j \leq n_2$, as
\begin{align*}
  \Ga_S(\La_x)[\bar \de_x]_{i,j} &=
     \frac{(\bar \de_x)_{i,j} + (\bar \de_x)_{j,i}}{2}
     \times
     \choice{
       0 & \text{if} \quad i=j\\
       \frac{\ST(\La_x, \rho \la)_i-\ST(\La_x, \rho \la)_j}{(\La_x)_i-(\La_x)_j}
       & \text{if} \quad (\La_x)_i \neq (\La_x)_j \\
       \JACs{1}{\ST}(\La_x, \rho \la)_{i,i}
       & \text{otherwise},
     } \\
  \Ga_A(\La_x)[\bar \de_x]_{i,j} &=
  \frac{(\bar \de_x)_{i,j} - (\bar \de_x)_{j,i}}{2}
  \times
  \choice{
    0 & \text{if} \quad i=j\\
    \frac{\ST(\La_x, \rho \la)_i+\ST(\La_x, \rho \la)_j}{(\La_x)_i+(\La_x)_j}
    & \text{if} \quad (\La_x)_i > 0 \quad \text{or} \quad (\La_x)_j > 0\\
    \JACs{1}{\ST}(\La_x, \rho \la)_{i,i}
    & \text{otherwise},
  }
\end{align*}
where for $i > n$ we have extended $\La_x$ and $\ST(\La_x, \rho \la)$ as
$(\La_x)_i = 0$ and $\ST(\La_x, \rho \la)_i = 0$,
and for $j > n_1$ or $i > n_2$,
$\bar \de_x$ as $(\bar \de_x)_{j,i} = 0$.
Recall from \eqref{eq:diff_st}~that the weak derivatives of the soft-thresholding are defined, for
$t \in \RR^{N}$, $\rho > 0$, $\de_t \in \RR^{N}$,
$1 \leq i \leq N$, by
\begin{align}\label{eq:diff_soft_thresholding}
  \JACs{1}{\ST}(t, \rho)_{i,i} &=
  \choice{
    0 &\qifq |t_i| \leq \rho \\
    1 &\quad\text{otherwise},
  }\\
  \JACs{1}{\ST}(t, \rho)[\de_t]_i &=
  \JACs{1}{\ST}(t, \rho)_{i,i} \times (\de_t)_i
  \nonumber\\
  \qandq \quad
  \JACs{2}{\ST}(t, \rho)_i &=
  \choice{
    0 &\qifq |t_i| \leq \rho \\
    -\sign(t_i) &\quad\text{otherwise}.
  }
  \nonumber
\end{align}
%Note that $\ST(t, \rho)_i$ though not differentiable at $\pm \rho$, is directionally differentiable there.

The closed-form expression we derived for
$\JAC{1}{\Prox_{\tau G_1}}( x,y,\la )[\de_x]$ is far from trivial. It is
essentially due to \cite{lewis-twice-spectral,Sun02,edelman-handout}, see \cite{Candes12} for an expression similar to ours.
The generalization of this result to other matrix-valued spectral function has been studied in \cite{deledalle2012risk}.\\

\paragraph{Application to matrix completion}
We illustrate the nuclear norm regularization on a matrix completion problem encountered in recommendation systems such as the popular Netflix problem \cite{bennett2007netflix}.
We therefore consider $y \in \RR^P$ with the forward model $Y = \Phi x_0 + W$, $W \sim \Nn(0, \sigma^2 \Id_P)$,
where $x_0$ is a dense but low-rank (or approximately so) matrix and $\Phi$ is a binary masking operator.

We have taken $(n_1,n_2)=(1000,100)$ and $P=25000$ observed entries (i.e., $25\%$).
The underlying matrix $x_0 = V_{x_0} \diag{\La_{x_0}} U^*_{x_0}$ has been chosen with $V_{x_0}$ and $U_{x_0}$ two realizations of the uniform distribution of orthogonal matrices and $\La_{x_0} = (k^{-1})_{1\leq k \leq n}$ such that $x_0$ is approximately low-rank with a rapidly decaying spectrum.
The binary masking operator is such that for $i = 1, \ldots, P$,
$(\Phi x)_i = x_{\Sigma(i)_1,\Sigma(i)_2}$ where
$\Sigma : [1, \ldots, n_1 \times n_2] \rightarrow [1, \ldots, n_1] \times [1, \ldots, n_2]$
is the realization of a random permutation of the $n_1 \times n_2$ entries of $x$.
The standard deviation $\sigma$ has been set such that
the resulting minimum least-square estimate
$x_{\mathrm{LS}}(y) = \Phi^* y$
has a relative error $\norm{x_{\mathrm{LS}}(y)-x_0}_F/\norm{x_0}_F=0.9$.

\begin{figure}[t]
  \centering
  \subfigure[]{\includegraphics[width=0.48\linewidth]{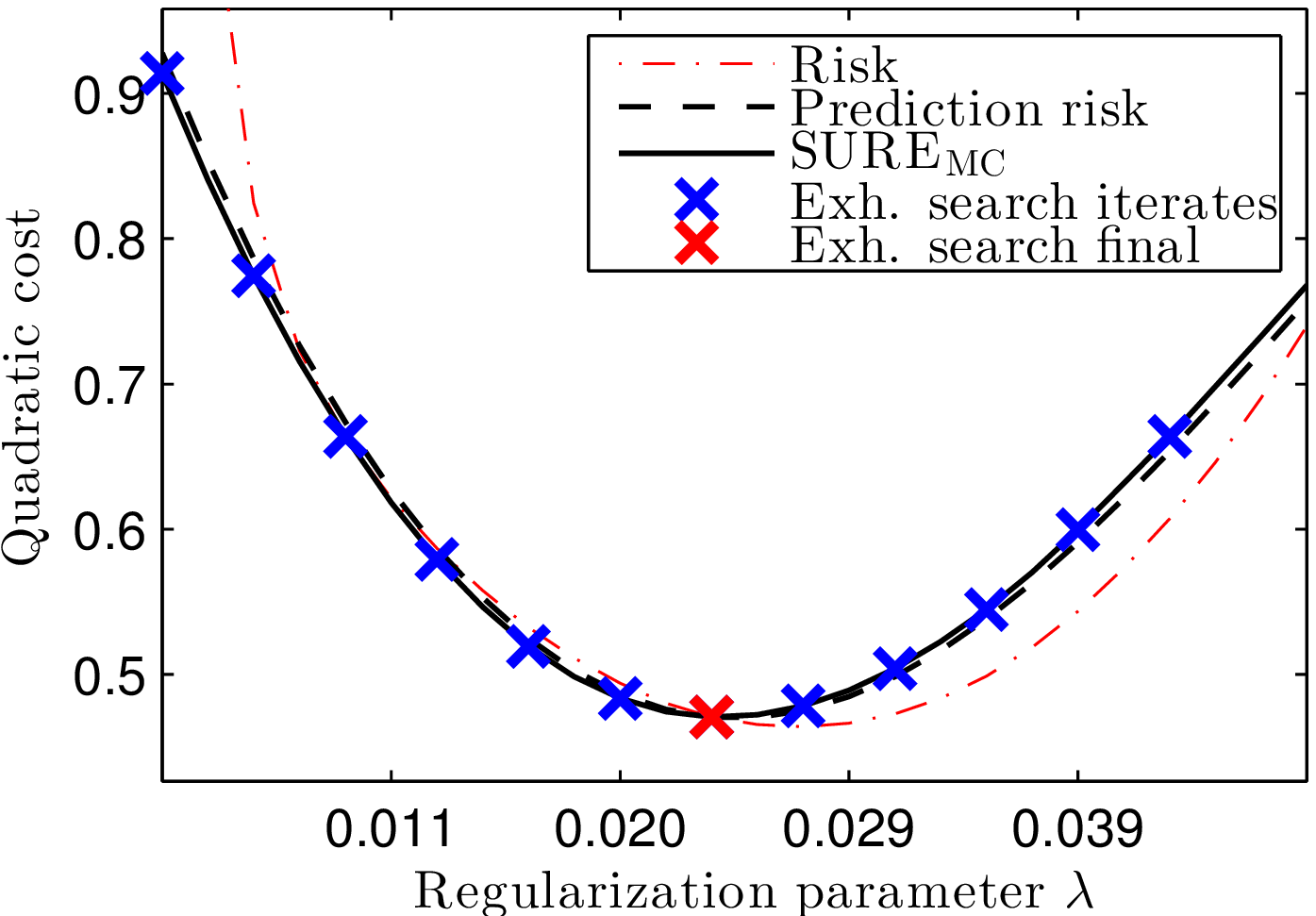}}
  \hfill
  \subfigure[]{\includegraphics[width=0.48\linewidth]{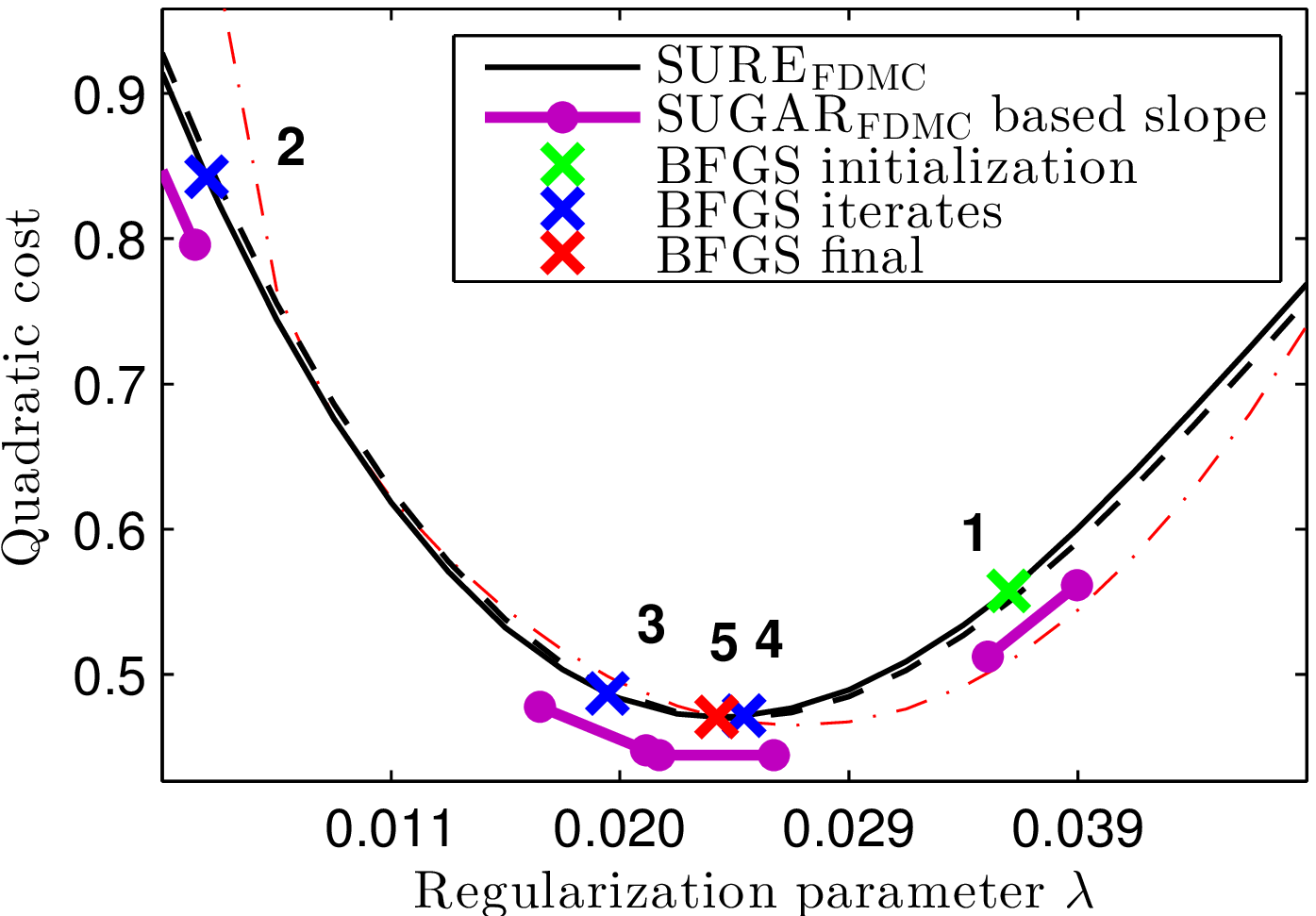}}\\
  \subfigure[]{\includegraphics[height=0.32\linewidth]{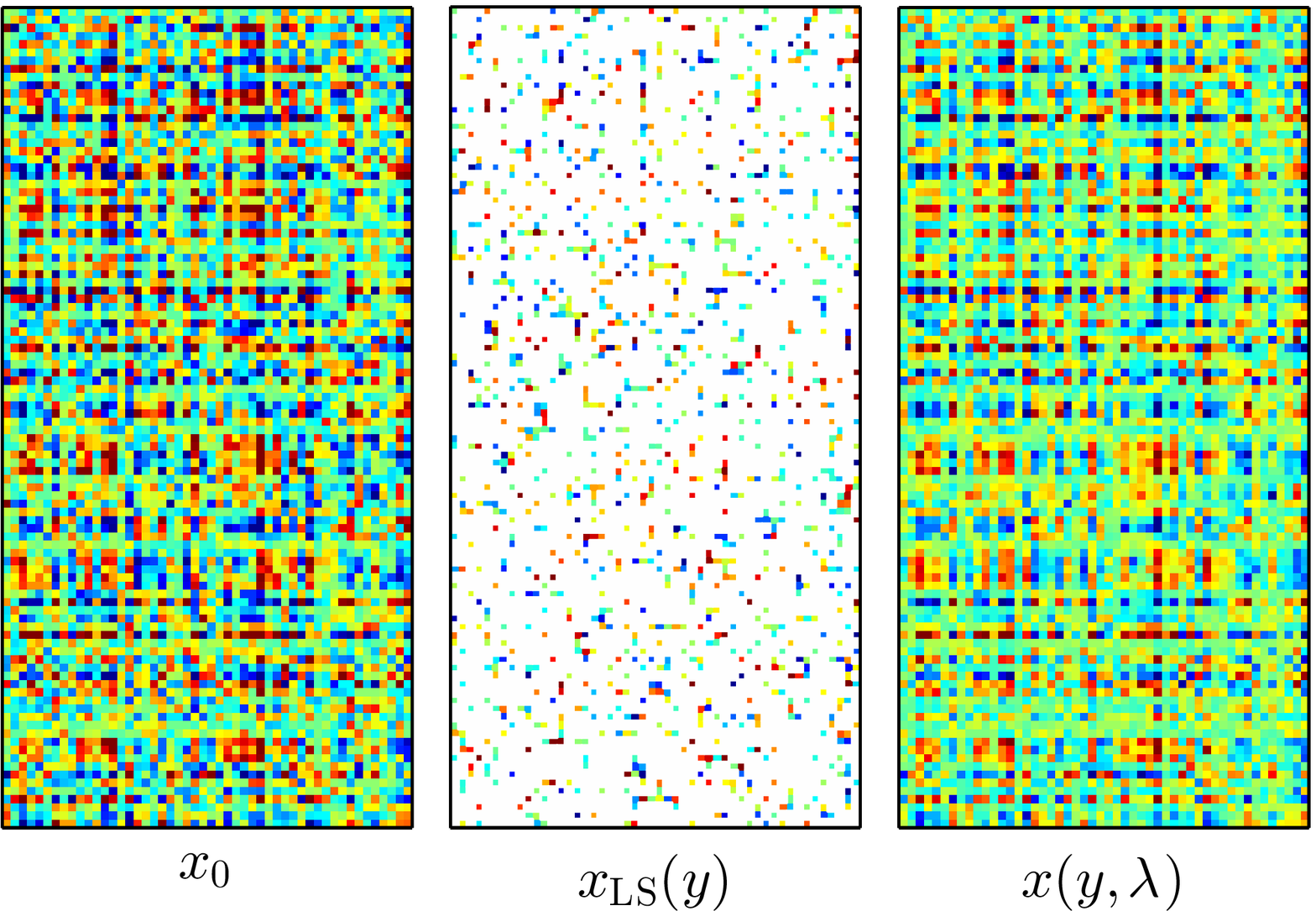}}
  \subfigure{\includegraphics[height=0.32\linewidth]{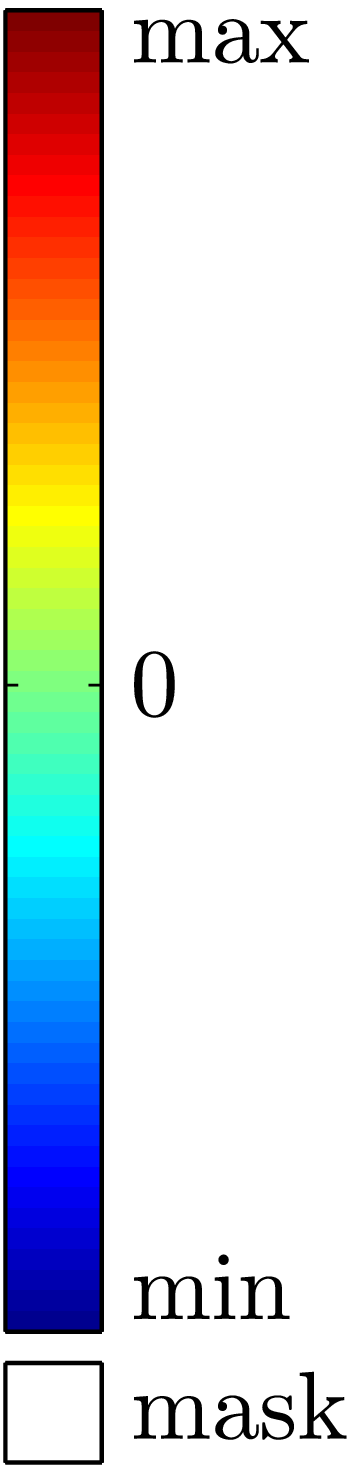}}
  \setcounter{subfigure}{3}%
  \hfill
  \subfigure[]{\includegraphics[height=0.32\linewidth]{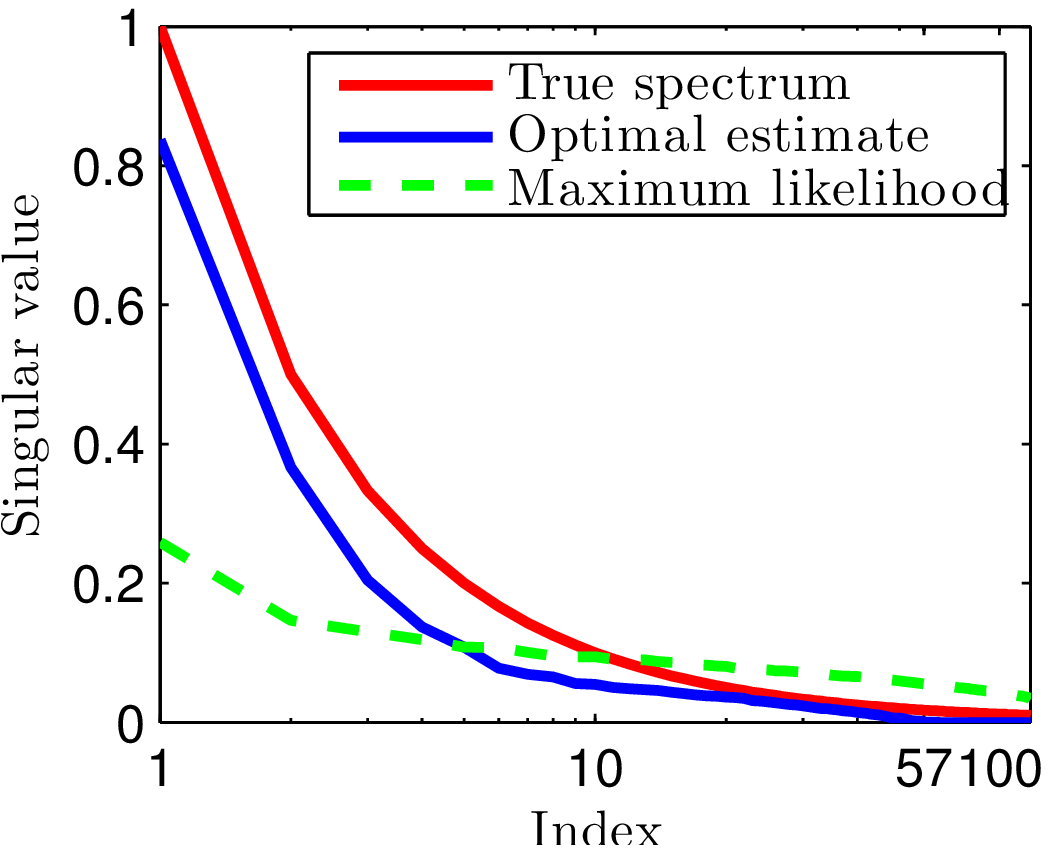}}\\
  \caption{(a-b) Risk, prediction risk and its $\SUREn$
    estimates$^{\ref{footnote:shift}}$ as a function
    of the regularization parameter $\lambda$.
    (a) The $12$ points where $\SUREMC{x}(y, \lambda, \delta)$ has been
    evaluated by exhaustive search. (b) The $5$ evaluation
    points of $\SUREAMC{x}(y, \lambda, \delta, \epsilon)$ and $\SUGARAMC{x}(y, \lambda, \delta, \epsilon)$
    required by BFGS to reach the optimal one.
    (c-d) Respectively, a close in and the spectrum
    of the underlying matrix $x_0$, the
    least-square estimate $x_{\mathrm{LS}}(y)$ and
    the solution $x(y, \lambda)$ at the optimal $\lambda$.
  }
  \label{fig:matrix_completion_sure}
  %\vspace{-0.5cm}
\end{figure}

Figure \ref{fig:matrix_completion_sure}.(a) and (b) depict the risk,
the prediction risk and the $\SUREn=\ASUREn$ (with $A = \Id$)
estimates\footnote{Without impacting the optimal choice of $\la$, the curves have been rescaled for visualization purposes.\label{footnote:shift}} as a function of $\la$ obtained from a single realization of $y$ and $\de$.
In (a), $\SUREMC{x}(y, \lambda, \delta)$ has been
evaluated for $12$ values of $\la$ chosen in a suitable tested range
using the algorithm given in Figure~\ref{fig:pseudo-code-gsure}.
Figure (b), shows the benefit of computing $\SUREAMC{x}(y, \lambda, \delta, \epsilon)$
and $\SUGARAMC{x}(y, \lambda, \delta, \epsilon)$, as described in Figure~\ref{fig:pseudo-code-sugar}, to realize a quasi-Newton optimization.
The sequence of iterates $\lambda_{n}$
is represented as well as the sequence of the slopes
of $\SUREAMC{x}(y, \lambda_{n}, \delta, \epsilon)$ given by $\SUGARAMC{x}(y, \lambda_n, \delta, \epsilon)$.
The BFGS algorithm reaches
to the optimal value in $5$ iterations only.
One can also notice that $\SUREAMC{x}(y, \lambda, \delta, \epsilon)$ and $\SUREMC{x}(y, \lambda, \delta)$
are both good, and visually equivalent, estimators
of the prediction risk.
At the optimum value $\lambda^\star$ minimizing the $\SUREn$,
the true risk is not too far from its minimum showing that,
in this case, the prediction risk is indeed a good objective in order to
minimize the risk.
In Figure~\ref{fig:matrix_completion_sure}.(c)
a close in on the solution $x(y, \lambda^\star)$
is compared to $x_0$ and $x_{\mathrm{LS}}(y)$,
and their respective spectrum in  Figure~\ref{fig:matrix_completion_sure}.(d).
The solution $x(y, \lambda^\star)$ has a rank of $57$
with a relative error of $0.45$ (i.e., a gain of about a factor $2$ w.r.t.~the least-square estimator).

\subsection{Total-Variation Regularization}

We consider the recovery of a piece-wise constant two dimensional
image $x_0 \in \RR^{n_1 \times n_2}$
from an observation $y$ of $Y = \Phi x_0 + W \in \RR^P$, $W \sim \Nn(0, \sigma^2 \Id_P)$,
where we have identified the image space $\RR^{n_1 \times n_2}$
to the vector space $\RR^{N}$ with $N = n_1 n_2$.
To this end, we suggest using
(isotropic) total variation regularization of the form
\begin{align}\label{eq-optim-tv}
  x^\star(y, \lambda) \in \uArgmin{x} \frac{1}{2}\norm{\Phi x - y}^2 + \la \| \tilde{\nabla} x \|_{1,2} ~,
\end{align}
where $\lambda>0$ and $\tilde{\nabla} : \RR^N \to \RR^{N \times 2}$ is the two-dimensional discrete gradient operator.
The $\ell^1$-$\ell^2$ norm of a vector field $t = (t_i)_{i=1}^N \in \RR^{N \times 2}$,
with $t_i \in \RR^2$, is defined as $\norm{t}_{1,2} = \sum_i \norm{t_i}$.
Total-variation promotes the sparsity of the gradient field which
turns out to be a prior that enforces smoothing while
preserving edges.
The parameter $\lambda$ controls the regularity of the image.
A large value of $\lambda$ results to an image with large
homogeneous areas while a small value results to an image
with several small disconnected regions. The optimal value
of $\lambda$ is image and degradation dependent revealing
the importance of automatic selection procedures.

%\subsubsubsection{CP formulation}

%%%Problem \eqref{eq-optim-tv} is a special instance of \eqref{eq-min-cp}
%%%letting $H(x,y)=0$, and for all $(x,y)$ and $u=(s,t) \in \RR^{P} \times \RR^{N \times 2}$:
%%%{\small
%%%\begin{align*}
%%%  K(x) = (\Phi x,\nabla x) \qandq
%%%  G(u,y) =  \frac{1}{2}\norm{s - y}^2 + \la \norm{t}_1 ~.
%%%\end{align*}}%
%%%Separability of $G$ in $s$ and $t$ entails that
%%%{\small\eq{
%%%    \Prox_{\tau G}( u,y ) = ((1-\tau)s + \tau y, T_{\la\tau}(t) ) ~,
%%%}}%
%%%where $T_{\rho}, \rho > 0$, is the component-wise $\ell^1-\ell^2$ soft-thresholding, defined for $i=1,\ldots,N$ as
%%%{\small
%%%  \begin{align}
%%%    \label{eq:soft_thresholding}
%%%    &T_{\rho}(t)_i = \max(0,1-\rho/\norm{t_i})t_i\\
%%%    \qandq&
%%%    \partial T_\rho(t)[\de_t]_i =
%%%    \choice{
%%%      0 \qifq \norm{t_i} \leq \rho \\
%%%      \de_{t,i} - \frac{\rho}{\norm{t_i}}P_{t_i}(\de_{t,i}) \quad\text{otherwise}
%%%    },
%%%\end{align}}%
%%%where $P_{\al}$ is the orthogonal projector on $\al^\bot$ for $\al \in \RR^2$, and $T_{\rho}(t)_i$ although not differentiable on the sphere ${t_i: \norm{t_i}=\rho}$, is directionally differentiable there.
%%%Therefore,
%%%{\small\eq{
%%%    \Prox_{\tau G}( u,y ) = ((1-\tau)s + \tau y, T_{\la\tau}(t) ) ~,
%%%}}%
%%%
%%%Therefore
%%%{\small
%%%\begin{align*}
%%%  & \Prox_{\tau G}( u,y )[\de_s,\de_t] = ( (1-\tau)\de_s, \partial T_{\la\tau}(\de_t) )\\
%%%  &\qandq
%%%	\partial_2 \Prox_{\tau G}( u,y )[\de_y] = (\tau \de_y, 0 ) ~.
%%%\end{align*}}%

Problem \eqref{eq-optim-tv} is a special instance of
\eqref{eq-min-gfb} using $x=(f,u) \in \Xx = \RR^{N} \times \RR^{N \times 2}$,
the parameter $\la = \theta \in \Theta = \RR^+$,
 $Q=2$ simple functionals, and for $x=(f,u)$
\begin{align*}
  F(x, y, \la) &= \frac{1}{2}\norm{\Phi f - y}^2,\\
  G_1(x,y, \la) &= \la \norm{u}_{1,2}\\
  \qandq
  G_2(x,y, \la) &= \iota_{\Cc}(x)
  \qwhereq
  \Cc = \enscond{x=(f,u)}{u=\tilde{\nabla} f} ~.
\end{align*}
Hence the GFB algorithm can be used to solve \eqref{eq-optim-tv}
using
\begin{align*}
  \nabla_1 F(x,y,\la) &= (\Phi^* (\Phi f - y), 0),\\
  \Prox_{\tau G_1}(x, y, \la) &= (f, \ST_{1,2}(u, \tau\la))\\
  \qandq
  \Prox_{\tau G_2}(x, y, \la) &=
  ((\Id + \Delta)^{-1} (f + \diverg u), \tilde{\nabla} (\Id + \Delta)^{-1} (f + \diverg u))
\end{align*}
where $\Delta$ is the Laplacian operator and $\diverg$ is the discrete divergence operator such that $\diverg=-\tilde{\nabla}^*$.
The operator $\ST_{1,2}$ is the component-wise $\ell^1$-$\ell^2$ soft-thresholding defined, for any dimensions $N$ and $D$, $t \in \RR^{N\times D}$ and $\rho > 0$, by
\begin{align}
  \label{eq:cw_soft_thresholding}
  \ST_{1,2}(t, \rho)_i
  %= \max(0,1-\rho/\norm{t_i})t_i
  =
  \choice{
    0 & \qifq \norm{t_i} \leq \rho \\
    t_i-\rho \; t_i/\norm{t_i} & \quad\text{otherwise}
  },
  \quad
  \text{for all } 1 \leq i \leq N.
\end{align}
For $D = 1$, the component-wise $\ell^1$-$\ell^2$ soft-thresholding reduces to \eqref{eq:st}.
Corollary \ref{cor:gfb_diff_y} and \ref{cor:gfb_diff_theta} can then be applied,
for any $\de_x = (\de_f, \de_u) \in \Xx$ and $\de_y \in \Yy$, using the relations
\begin{align*}
  \begin{array}{llll}
  & \JAC{1}{\nabla_1 F}(x,y,\la)[\de_x]&=&(\Phi^* \Phi \de_f, 0),\\
  & \JAC{2}{\nabla_1 F}(x,y,\la)[\de_y] &=&(- \Phi^* \de_y, 0),\\
  & \JAC{3}{\nabla_1 F}(x,y,\la) &=&(0, 0),\\\\[-0.3cm]
  & \JAC{1}{\Prox_{\tau G_1}}( x,y,\la )[\de_x] &=&(\de_f, \JACs{1}{\ST_{1,2}}(u, \tau\la)[\de_u]),\\
  & \JAC{2}{\Prox_{\tau G_1}}( x,y,\la )[\de_y] &=&(0, 0),\\
  & \JAC{3}{\Prox_{\tau G_1}}( x,y,\la ) &=&(0, \JACs{2}{\ST_{1,2}}(u, \tau\la))\\\\[-0.3cm]
  \qandq
  & \JAC{1}{\Prox_{\tau G_2}}( x,y,\la )[\de_x] &=& ((\Id + \Delta)^{-1}(\de_f + \diverg \de_u),
 % \\ & && \quad
  \tilde{\nabla} (\Id + \Delta)^{-1}(\de_f + \diverg \de_u)),\\
  & \JAC{2}{\Prox_{\tau G_2}}( x,y,\la )[\de_y] &=&(0, 0),\\
  & \JAC{3}{\Prox_{\tau G_2}}( x,y,\la ) &=&(0, 0)
  \end{array}
\end{align*}
where the weak derivatives of the component-wise $\ell^1$-$\ell^2$ soft-thresholding are defined, for any
dimensions $N$ and $D$, $t \in \RR^{N \times D}$, $\rho > 0$
and $\de_t \in \RR^{N \times D}$, by
\begin{align}\label{eq:cw_diff_soft_thresholding}
  \JACs{1}{\ST_{1,2}}(t, \rho)[\de_t]_i &=
  \choice{
    0 &\qifq \norm{t_i} \leq \rho \\
    \de_{t,i} - \frac{\rho}{\norm{t_i}}P_{t_i}(\de_{t,i}) &\quad\text{otherwise}
  }\\
  \qandq \quad
  \JACs{2}{\ST_{1,2}}(t, \rho)_i &=
  \choice{
    0 &\qifq \norm{t_i} \leq \rho \\
    -t_i/\norm{t_i} &\quad\text{otherwise}
  }\nonumber
\end{align}
where $P_{\al}$ is the orthogonal projector on $\al^\bot$ for $\al \in \RR^2$.
%\GP{The following sentence is hard to interpret. Is it really needed ? Otherwise maybe add equation, because probably there are direction where it is not differentiable.}
%Note that $\ST(t, \rho)_i$ although not differentiable on the sphere $\enscond{t_i}{\norm{t_i}=\rho}$, is Lebesgue-a.e.~directionally differentiable here.
\\

\begin{figure}[t]
  \centering
  \subfigure[]{\includegraphics[width=0.48\linewidth]{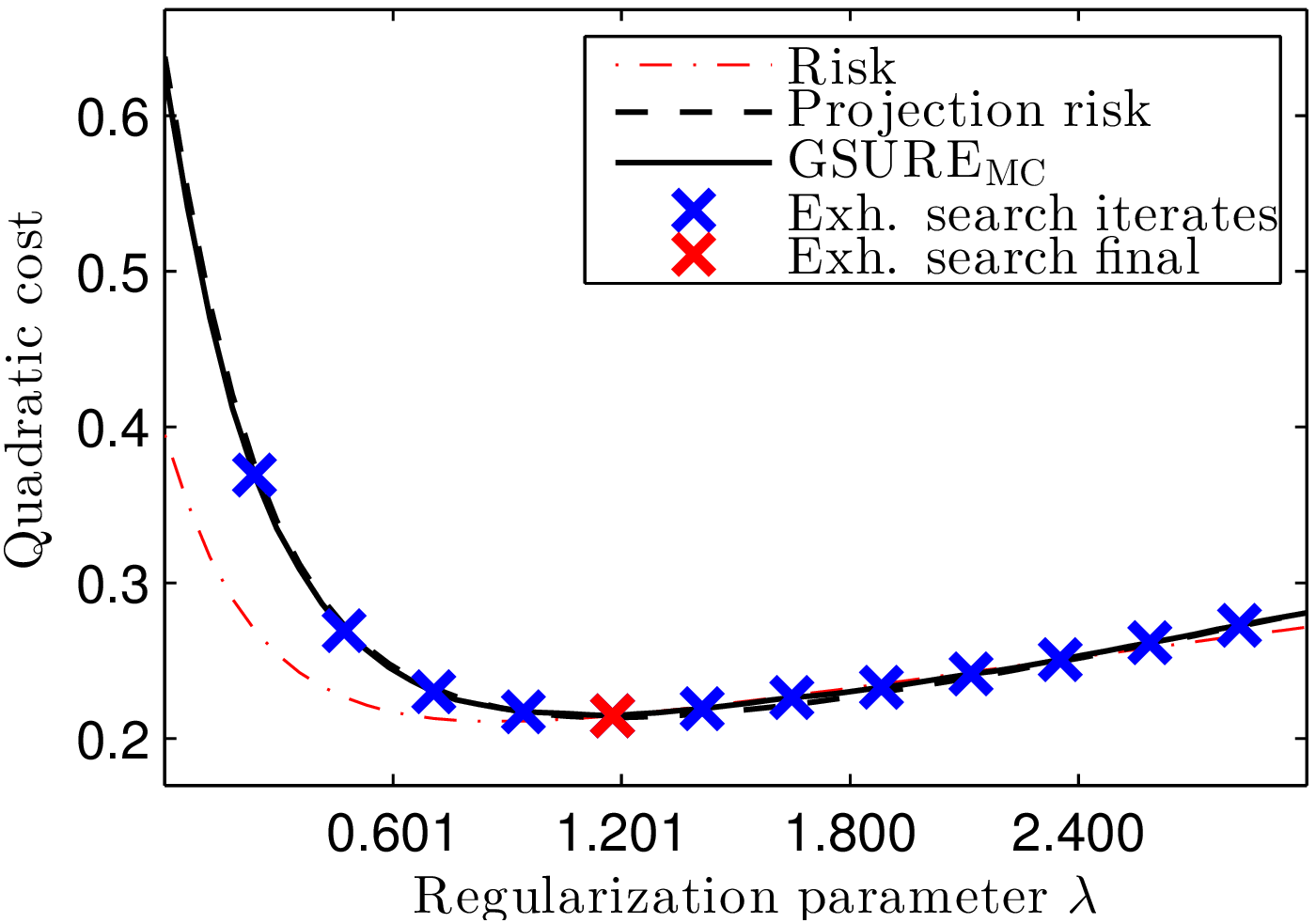}}
  \hfill
  \subfigure[]{\includegraphics[width=0.48\linewidth]{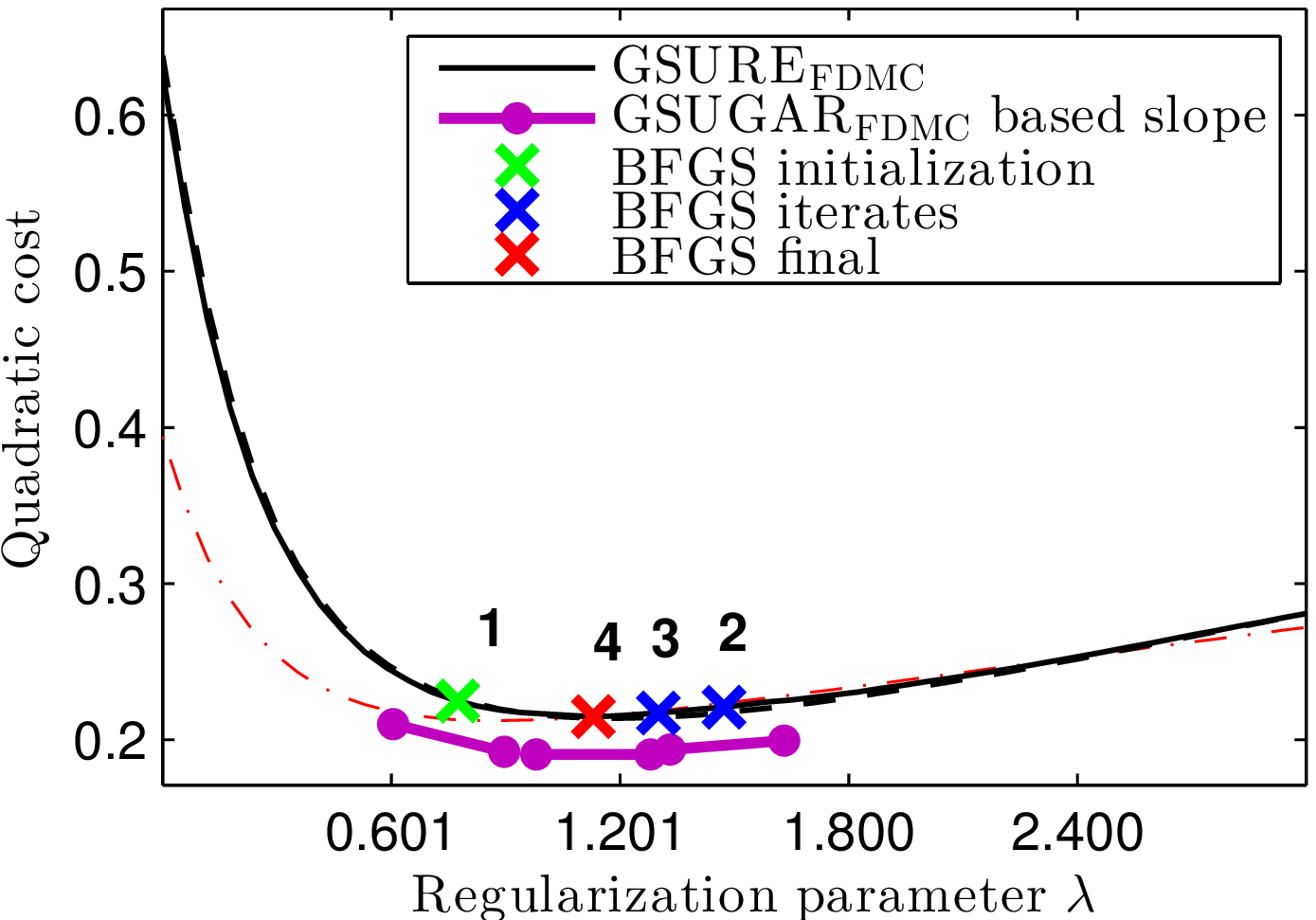}}\\
  \subfigure[]{\includegraphics[width=0.32\linewidth,viewport=200 320 392 512,clip]{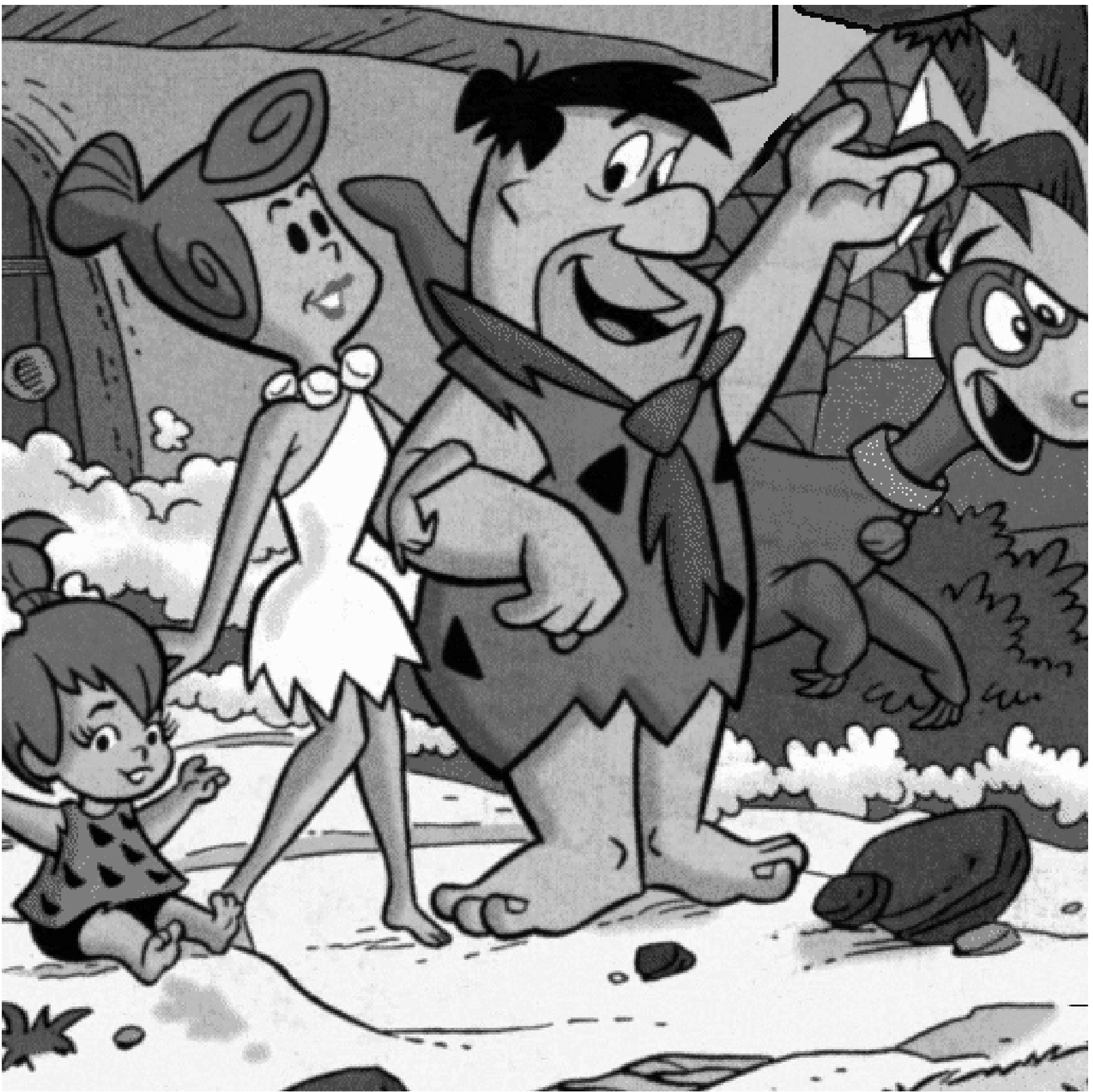}}
  \hfill
  \subfigure[]{\includegraphics[width=0.32\linewidth,viewport=200 320 392 512,clip]{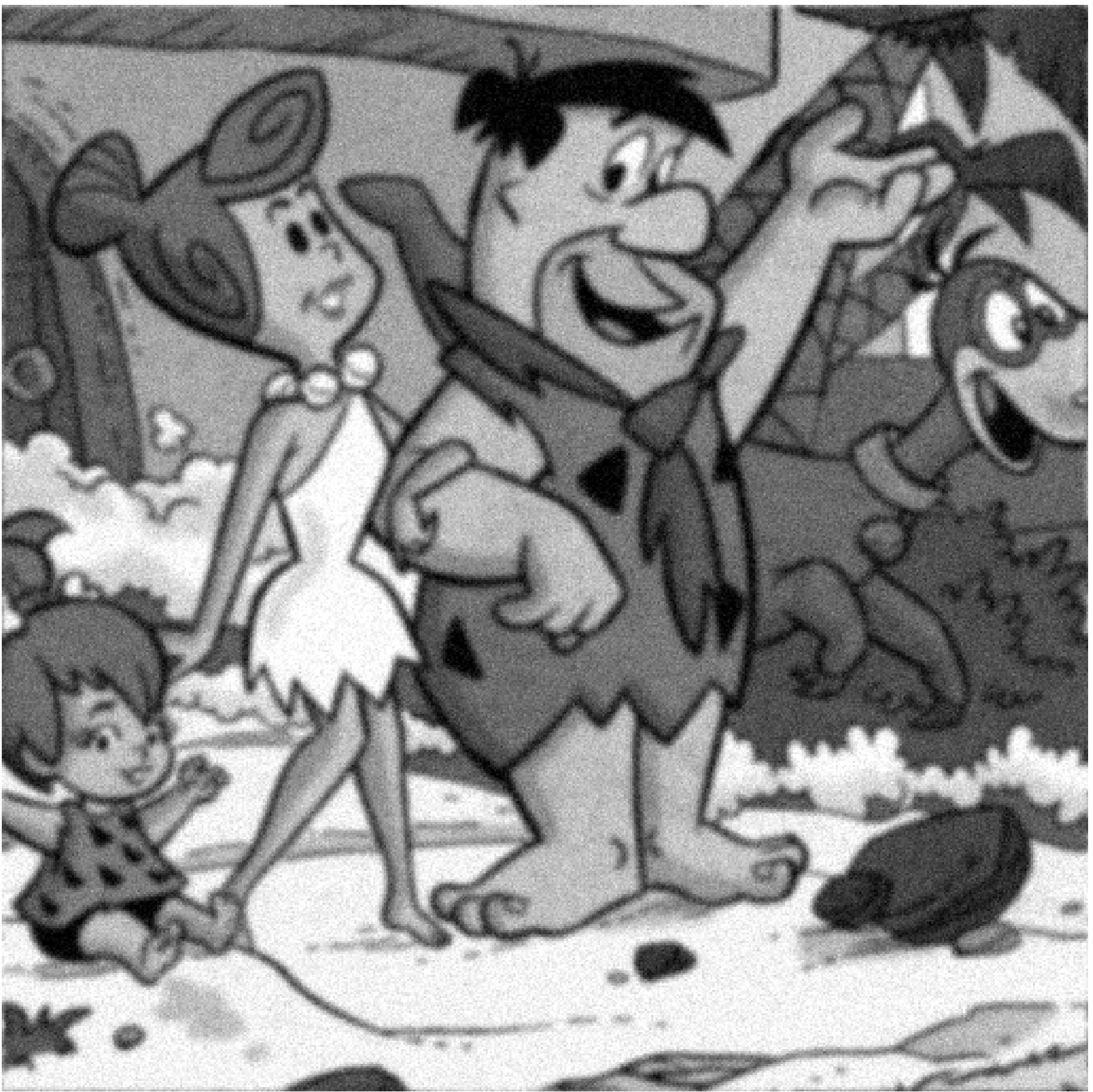}}
  \hfill
  \subfigure[]{\includegraphics[width=0.32\linewidth,viewport=200 320 392 512,clip]{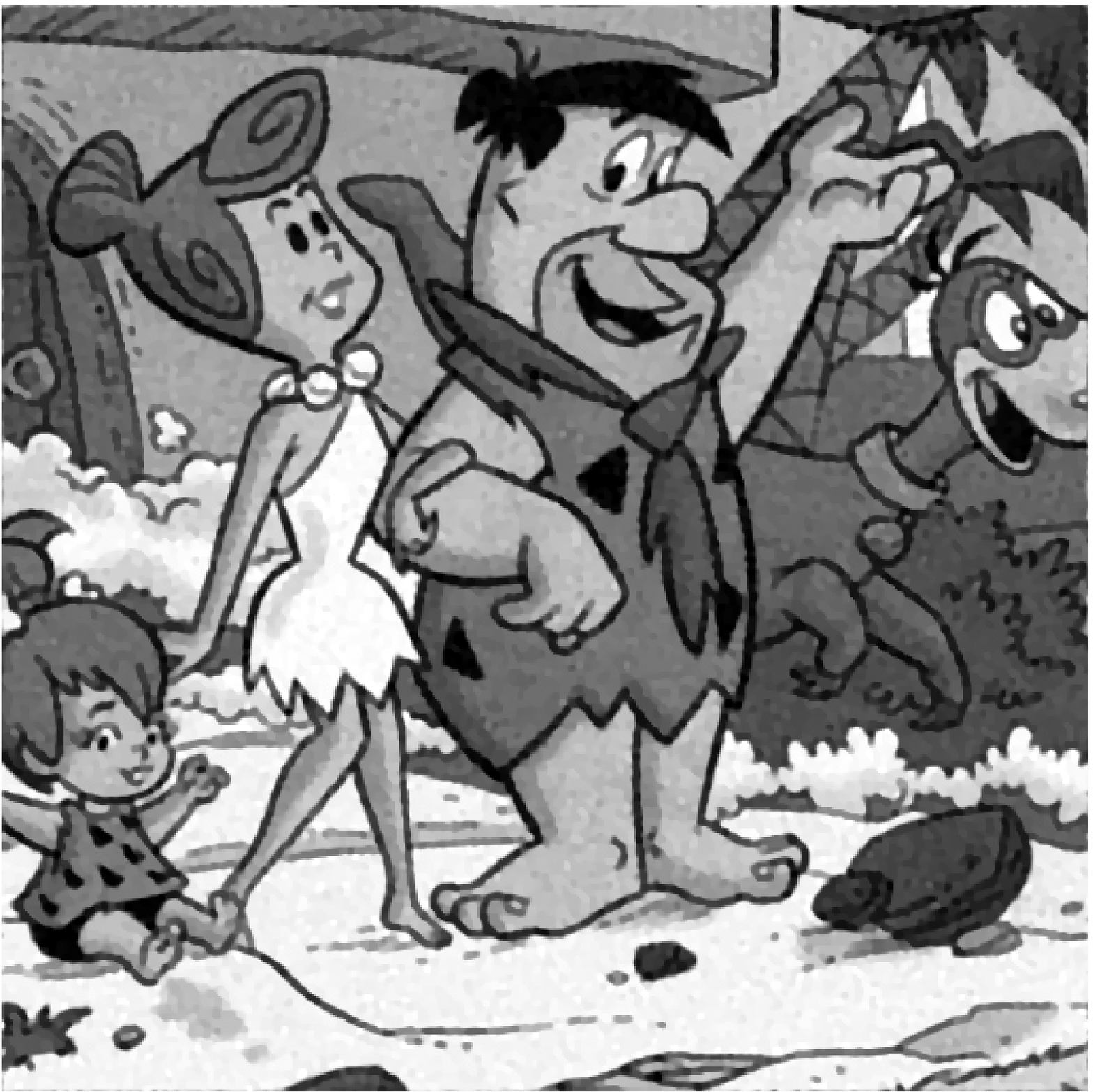}}\\
  \caption{(a-b) Risk, projection risk and its $\GSUREn$
    estimates$^{\ref{footnote:shift}}$ as a function
    of the regularization parameter $\lambda$.
    (a) The $12$ points where $\GSUREMC{x}(y, \lambda, \delta)$ has been
    evaluated by exhaustive search. (b) The $4$ evaluation
    points of $\GSUREAMC{x}(y, \lambda, \delta, \epsilon)$ and $\GSUGARAMC{x}(y, \lambda, \delta, \epsilon)$
    required by BFGS to reach the optimal one.
    (c-d) Respectively, a close in of the underlying image $x_0$, the
    observation $y$ and
    the solution $x(y, \lambda)$ at the optimal $\lambda$.
  }
  \label{fig:total_variation_sure}
  %\vspace{-0.5cm}
\end{figure}

\paragraph{Application to image deblurring}
We illustrate the total-variation regularization on an image deblurring problem.
We therefore consider the forward model $Y = \Phi x_0 + W \in \RR^P$, $W \sim \Nn(0, \sigma^2 \Id_P)$,
where $x_0$ is a piece-wise constant (or approximately so) image
and $\Phi$ is a discrete convolution matrix.

We have taken a cartoon-like image of size $(n_1,n_2)=(512,512)$ and
$P = 512^2$ observations corresponding to noisy observations of
a convolution product with a discrete Gaussian kernel
of radius $2$ pixels. To ensure numerical stability of the pseudo-inverse (typically for the least-square estimate and the computation of the projection risk and its estimate), the kernel has been truncated in the Fourier domain such that
too small contributions have been set to $0$. The consequence is
that around $80$\% of (high) frequencies are masked.
The standard deviation of the noise has been set to $\sigma = 10$
(for an image $x_0$ with a range $[0,255]$)
such that
the resulting minimum least-square estimate
$x_{\mathrm{LS}}(y) = \Phi^+ y$ has a peak signal-to-noise ratio (PSNR)
equals to
$10 \log_{10}(255^2/\norm{x_{\mathrm{LS}}(y)-x_0}_F^2)=21.02$ dB.

Figure \ref{fig:total_variation_sure}.(a) and (b) display the risk,
the projection risk and the $\GSUREn=\ASUREn$ (with $A = \Pi$)
estimates as a function of $\la$
obtained from a single realization of $y$ and $\de$.
In (a),
$\GSUREMC{x}(y, \lambda, \delta)$ has been
evaluated for $12$ values of $\la$ chosen in a suitable tested range
using the algorithm given in Figure~\ref{fig:pseudo-code-gsure}.
Figure (b), shows the benefit of computing $\GSUREAMC{x}(y, \lambda, \delta, \epsilon)$
and $\GSUGARAMC{x}(y, \lambda, \delta, \epsilon)$, as described in Figure~\ref{fig:pseudo-code-sugar}, to realize a quasi-Newton optimization.
The sequence of iterates $\lambda_{n}$
is represented as well as the sequence of the slopes
of $\GSUREAMC{x}(y, \lambda_{n}, \delta, \epsilon)$ given by $\GSUGARAMC{x}(y, \lambda_n, \delta, \epsilon)$.
The BFGS algorithm reaches
to the optimal value in $4$ iterations.
%Compared to the experiments in Figure~\ref{fig:matrix_completion_sure},
%the variation of the risk with respect to the range of $\la$ values,
%is smaller such that, at this scale, one can visually
%notice the distinctions between
%$\GSUREAMC{x}(y, \lambda)$ and $\GSUREMC{x}(y, \lambda)$.
%As expected, $\GSUREAMC{x}(y, \lambda)$ is smoother.
The deviation of $\GSUREAMC{x}(y, \lambda, \delta, \epsilon)$ from the projection risk is of the same
order as the deviation of $\GSUREMC{x}(y, \lambda, \delta)$.
At the optimum value $\lambda^\star$ minimizing the $\GSUREn$,
the true risk is not too far from its minimum showing that relatively to
the range of variation of the risk,
in this case, the projection risk is indeed a good objective in order to
minimize the risk.
In Figure~\ref{fig:total_variation_sure}.(c-e)
the solution $x(y, \lambda^\star)$
is compared to $x_0$ and $y$.
The solution $x(y, \lambda^\star)$ has a PSNR
of $24.98$ dB (i.e., a gain of about $+3.94$ dB).
Remark that given such noise level and convolution operator,
masking $80$\% of (high) frequencies,
the solution selected by minimizing the $\GSUREn$ criterion
is still a bit blurred and exhibit
a staircasing effect. Using a larger regularization parameter $\lambda$
would result in a more ``cartoon'' result with less blur.
This would however entail a larger bias, which corresponds to a loss of contrast inherent to the convexity of the TV prior.
This larger bias subsequently degrades the mean square error, which explains why it is not selected by the $\GSUREn$ criterion.

\subsection{Weighted $\ell_1$-analysis Wavelet Regularization}

%\GP{Replace $\la_k$ by $\th_k$}

We focus on the recovery of a piece-wise regular image
$x_0 \in \RR^{n_1 \times n_2}$
from an observation $y$ of $Y = \Phi x_0 + W \in \RR^P$, $W \sim \Nn(0, \sigma^2 \Id_P)$,
using a $J$-scale undecimated wavelet analysis regularization of the form
\begin{align}\label{eq-optim-wavelet}
  x^\star(y, \la) \in \uArgmin{x} \frac{1}{2}\norm{\Phi x - y}^2 + \norm{ \Psi x }_{1, \la}
  \qwhereq
  \Psi = \left(
  \begin{array}{c}
    \Psi^h_{1}\\
    \Psi^v_{1}\\
    \vdots\\
    \Psi^h_{J}\\
    \Psi^v_{J}
  \end{array}
  \right)
\end{align}
and $\la \in {\RR^+}^J$ and
$\Psi \in \RR^{2JN \times N}$ is the analysis operator of a two-orientation wavelet transform, where, for all scales $1 \leq j \leq J$, $\Psi^h_{j}$, $\Psi^v_{j}$
are defined such that, for $x \in \RR^{N}$, $u^h_{j} = \Psi^h_{j} x$
and $u^v_{j} = \Psi^v_{j} x$
are respectively the vectors of undecimated
wavelet coefficients
of $x$ in
the horizontal and vertical directions at the decomposition level $j$.
The weighted $\ell^1$-norm $\norm{ . }_{1, \la}$ is
\begin{align*}
  \norm{ \Psi x }_{1, \la} = \sum_{j = 1}^J \la_j \left( \big\| \Psi^h_{j} x \big\|_1 + \big\| \Psi^v_{j} x \big\|_1 \right)~.
\end{align*}
Multi-scale wavelet analysis promotes piece-wise regular images by enforcing smoothness while preserving
sharp discontinuities at different scales and orientations.
Each parameter $\lambda_j$ controls the regularity at scale $j$.
A large value of $\lambda_j$ tends to over-smooth structures at
scale $j$, while a small value leads to under-smoothing.
As noted in several papers, see e.g.~\cite{pesquet1997new,chaux2008nonlinear},
the optimal values $\lambda_j$ are also
image and degradation dependent revealing again
the importance of automatic selection procedures.

\begin{table}[!t]
  \caption{Illustration of the minimization of
    $\SUREAMCn$ in multi-scale regularization obtained
    for the three images of Figure~\ref{fig:multi_scale}
    with different numbers of scales from $J=1$ to $J=3$ using either
    one global parameter or one parameter per scales.
    For each case, the obtained optimal parameters $\lambda^\star$ are
    given. The associated value of $\SUREn$ and the PSNR are compared
    to neighbors of $\lambda^\star$ located at $0.75\lambda^\star$ and
    $1.25\lambda^\star$.}
  \label{tab:multi_scale}
  \centering
  {\small
  \begin{tabular}{|lc|ccl|ccc|}
    \hline
    \multicolumn{2}{|c|}{Input} & \multicolumn{3}{c|}{Optimal parameters} & \multicolumn{3}{c|}{SURE/PSNR}\\
    \hline
    Image & PSNR & $J$ & $\dim \Lambda$ & $\lambda^\star$ & $0.75\lambda^\star$ & $\lambda^\star$ & $1.25\lambda^\star$\\
    \hline
    \hline
    \texttt{Mandrill} & 17.37 &
    1 & 1 & (7.58) & 7.53/24.84 & {\bf 7.39}/24.90 & 7.43/{\bf 24.94}\\
    & &
    2 & 1 & (5.63) & 7.60/24.85 & {\bf 7.45}/24.88 & 7.58/{\bf 24.89}\\
    & &
    3 & 1 & (4.54) & 7.87/24.04 & {\bf 7.71}/{\bf 24.10} & 7.83/{\bf 24.10}\\
    \cline{3-8}
    & &
    2 & 2 & (5.94, 4.24) & 7.49/25.02 & {\bf 7.30}/25.06 & 7.38/{\bf 25.07}\\
    & &
    3 & 3 & (7.51, 1.07, 0.99) & 7.37/25.12 & {\bf 7.22}/25.18 & 7.33/{\bf 25.20}\\
    \hline
    \hline
    \texttt{House} & 17.65 &
    1 & 1 & (18.38) & 3.69/{\bf 31.16} & {\bf 3.51}/31.15 & 3.68/30.55\\
    & &
    2 & 1 & (11.11) & 3.72/31.31 & {\bf 3.51}/{\bf 31.40} & 3.81/31.05\\
    & &
    3 & 1 & (8.73) & 4.30/30.18 & {\bf 4.08}/{\bf 30.31} & 4.43/30.13\\
    \cline{3-8}
    & &
    2 & 2 & (14.47, 5.20) & 3.53/31.51 & {\bf 3.34}/{\bf 31.57} & 3.55/31.05\\
    & &
    3 & 3 & (15.00, 2.50, 2.83) & 3.52/31.55 & {\bf 3.27}/{\bf 31.63} & 3.44/31.14\\
    \hline
    \hline
    \texttt{Cameraman} & 15.13 &
    1 & 1 & (13.50) & 5.29/28.61 & {\bf 5.09}/{\bf 28.73} & 5.35/28.64\\
    & &
    2 & 1 & (8.78) & 5.34/28.75 & {\bf 5.09}/{\bf 28.83} & 5.38/28.72\\
    & &
    3 & 1 & (7.14) & 5.84/28.03 & {\bf 5.60}/{\bf 28.06} & 5.88/27.99\\
    \cline{3-8}
    & &
    2 & 2 & (10.98, 3.74) & 5.16/28.91 & {\bf 4.90}/{\bf 29.04} & 5.09/28.96\\
    & &
    3 & 3 & (11.56, 3.31, 0.97) & 5.07/29.00 & {\bf 4.86}/{\bf 29.11} & 5.13/28.99\\
%%%    \texttt{Mandrill} & 17.37 &
%%%    1 & 1 & (8.24) & 7.48/24.89 & {\bf 7.40}/24.92 & 7.51/{\bf 24.96}\\
%%%    & &
%%%    2 & 1 & (5.63) & 7.60/24.85 & {\bf 7.45}/24.88 & 7.58/{\bf 24.89}\\
%%%    & &
%%%    3 & 1 & (4.54) & 7.87/24.04 & {\bf 7.71}/{\bf 24.10} & 7.83/{\bf 24.10}\\
%%%    \cline{3-8}
%%%    & &
%%%    2 & 2 & (6.88, 3.14) & 7.44/25.12 & {\bf 7.28}/25.16 & 7.41/{\bf 25.17}\\
%%%    & &
%%%    3 & 3 & (7.30, 1.47, 2.03) & 7.35/25.16 & {\bf 7.19}/{\bf 25.21} & 7.32/{\bf 25.21}\\
%%%    \hline
%%%    \hline
%%%    \texttt{House} & 17.65 &
%%%    1 & 1 & (17.66) & 3.74/31.12 & {\bf 3.51}/{\bf 31.19} & 3.63/30.71\\
%%%    & &
%%%    2 & 1 & (10.14) & 3.91/31.20 & {\bf 3.56}/{\bf 31.43} & 3.65/31.25\\
%%%    & &
%%%    3 & 1 & (7.97) & 4.48/30.10 & {\bf 4.10}/{\bf 30.30} & 4.23/30.24\\
%%%    \cline{3-8}
%%%    & &
%%%    2 & 2 & (11.98, 6.16) & 3.76/31.38 & {\bf 3.37}/{\bf 31.62} & 3.42/31.45\\
%%%    & &
%%%    3 & 3 & (12.29, 4.70, 3.28) & 3.70/31.43 & {\bf 3.35}/{\bf 31.62} & 3.48/31.36\\
%%%    \hline
%%%    \hline
%%%    \texttt{Cameraman} & 15.13 &
%%%    1 & 1 & (13.50) & 5.29/28.61 & {\bf 5.09}/{\bf 28.73} & 5.35/28.64\\
%%%    & &
%%%    2 & 1 & (8.78) & 5.34/28.75 & {\bf 5.09}/{\bf 28.83} & 5.38/28.72\\
%%%    & &
%%%    3 & 1 & (7.20) & 5.83/28.03 & {\bf 5.60}/{\bf 28.06} & 5.91/27.99\\
%%%    \cline{3-8}
%%%    & &
%%%    2 & 2 & (11.07, 4.51) & 5.12/28.95 & {\bf 4.91}/{\bf 29.06} & 5.15/28.93\\
%%%    & &
%%%    3 & 3 & (10.25, 4.16, 2.52) & 5.16/29.02 & {\bf 4.92}/{\bf 29.13} & 5.10/29.04\\
    \hline
  \end{tabular}
  }
\end{table}

Problem \eqref{eq-optim-wavelet} is a special instance of
\eqref{eq-min-cp} where the parameter $\la = \theta \in \Theta = {\RR^+}^J$, and
\begin{align*}
  \begin{array}{rlll}
  &H(x,y,\la) &=& \frac{1}{2} \norm{\Phi x - y}^2,\\
  &G(u,y,\la) &=& \norm{u}_{1,\la}\\
  \qandq &
  K(x) &=& \Psi x~.
  \end{array}
\end{align*}
Hence the primal-dual CP splitting can be used to solve \eqref{eq-optim-wavelet}
using %\GP{Clash of notation between $\si$ and noise level.}
\begin{align*}
  \begin{array}{rlll}
  & \Prox_{\xi H}(x, y, \la) &=& x + \xi \Phi^* y - \xi \Phi^* (\Id + \xi \Phi \Phi^*)^{-1} \Phi(x + \xi \Phi^* y),\\
  & \Prox_{\tau G^*}(u, y, \la) &=& u - \tau \ST(u/\tau, \lambda/\tau)\\
  \qandq &
  K^*(u, \la) &=& \sum_{j = 1}^J ({\Psi^h_j}^* u^h_j + {\Psi^v_j}^* u^v_j)
  \end{array}
\end{align*}
where $\ST$ denotes here the multi-scale extension of the soft-thresholding operator \eqref{eq:st}, such that, for $t \in \RR^{2JN}$ and $\rho \in \RR^J$, we have
\begin{align*}
    \ST(t, \rho)^o_j
    =
    \ST(t^o_j, \rho_j)
\end{align*}
for all scales $1 \leq j \leq J$ and orientations $o = { v, h }$.
Corollary \ref{cor:gfb_diff_y} and \ref{cor:gfb_diff_theta} can then be applied using
\begin{align*}
  \begin{array}{rlll}
  & \JAC{1}{\Prox_{\xi H}}( x,y,\la )[\de_x] &=&
  \de_x + \xi \Phi^* (\Id + \xi \Phi \Phi^*)^{-1} \Phi \de_x,\\
  & \JAC{2}{\Prox_{\xi H}}( x,y,\la )[\de_y] &=&
  \xi \Phi^* \de_y - \xi^2 \Phi^* (\Id + \xi \Phi \Phi^*)^{-1} \Phi \Phi^* \de_y,\\
  & \JAC{3}{\Prox_{\xi H}}( x,y,\la ) &=& 0\\\\[-0.3cm]
  \qandq
  & \JAC{1}{\Prox_{\tau G^*}}( u,y,\la )[\de_u] &=&
  \de_u - \JACs{1}{\ST}(u/\tau, \la/\tau)[\de_u], \\
  & \JAC{2}{\Prox_{\tau G^*}}( u,y,\la )[\de_y] &=&
  0,\\
  & \JAC{3}{\Prox_{\tau G^*}}( u,y,\la ) &=&
  -\JACs{2}{\ST}(u/\tau, \la/\tau) .
  \end{array}
\end{align*}
where the derivatives of
the multi-scale soft-thresholding are defined, for any
$t \in \RR^{2JN}$, $\rho \in \RR^J$
and $\de_t \in \RR^{2JN}$, by
\begin{align}
  \JACs{1}{\ST}(t, \rho)[\de_t]^o_j =
  \JACs{1}{\ST}(t^o_j, \rho_j)[{\de_t}^o_j]
  \qandq
  \JACs{2}{\ST}(t, \rho)^o_j =
  \JACs{2}{\ST}(t^o_j, \rho_j)
\end{align}
for all scales $1 \leq j \leq J$ and orientations $o = { v, h }$.\\

\begin{figure}[!t]
  \subfigure{\includegraphics[width=0.19\linewidth,viewport=50 128 178 256,clip]{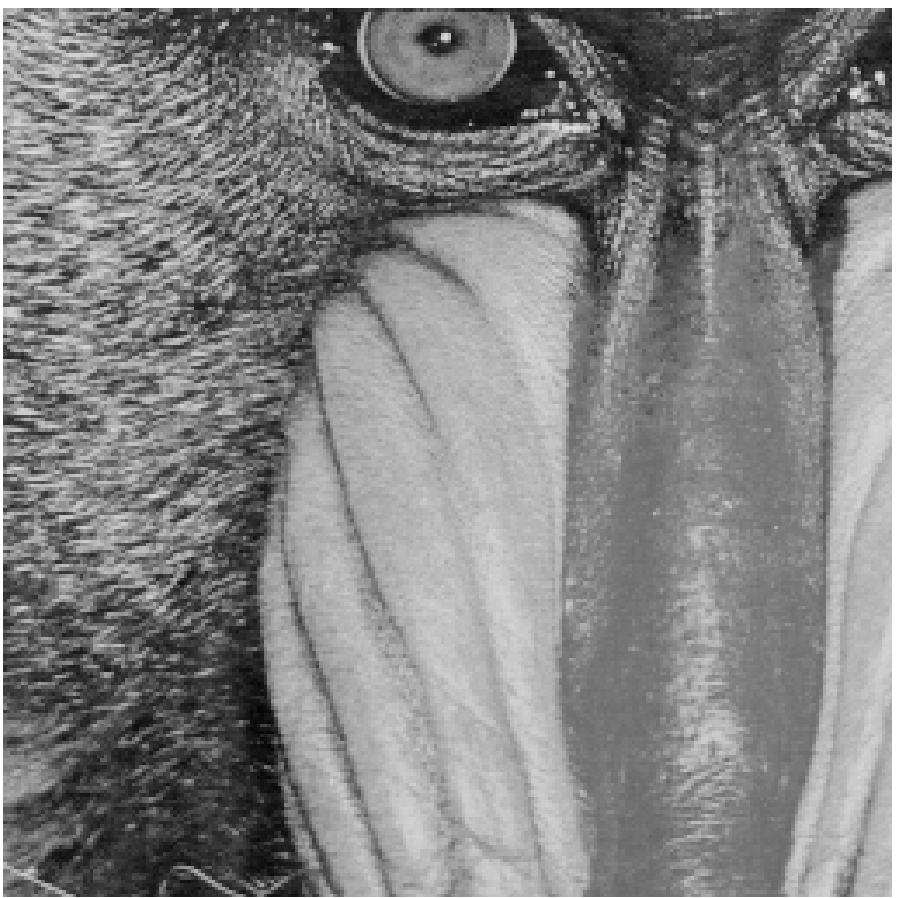}}
  \subfigure{\includegraphics[width=0.19\linewidth,viewport=50 128 178 256,clip]{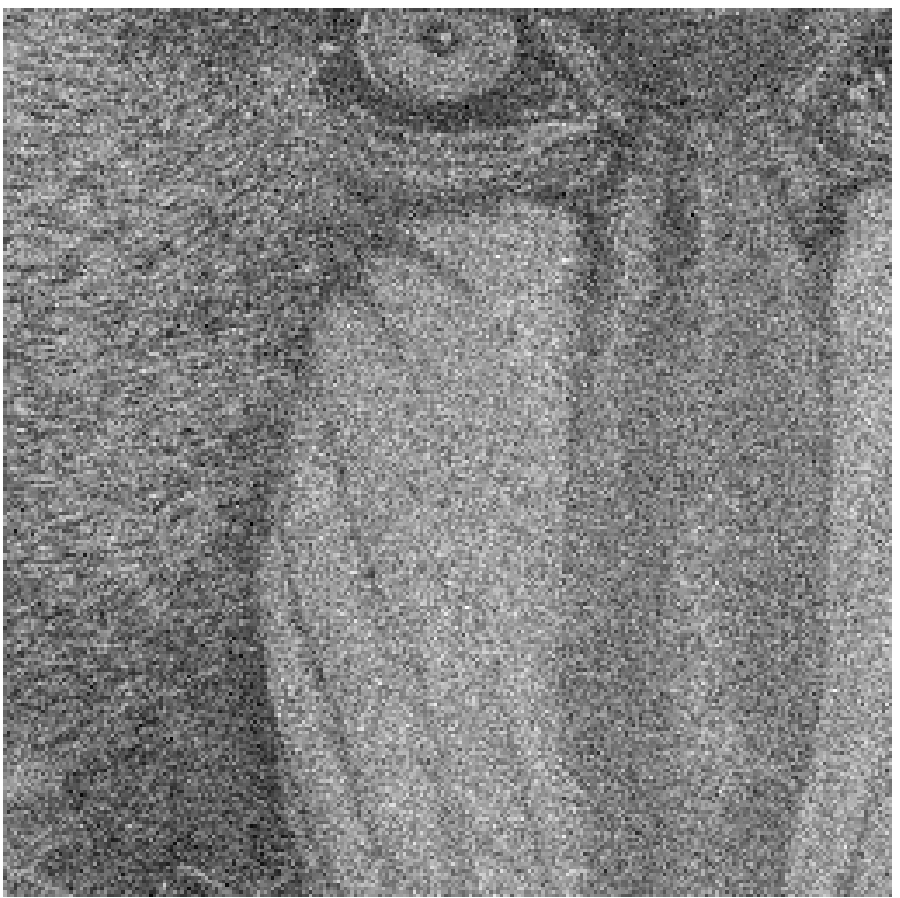}}
  \subfigure{\includegraphics[width=0.19\linewidth,viewport=50 128 178 256,clip]{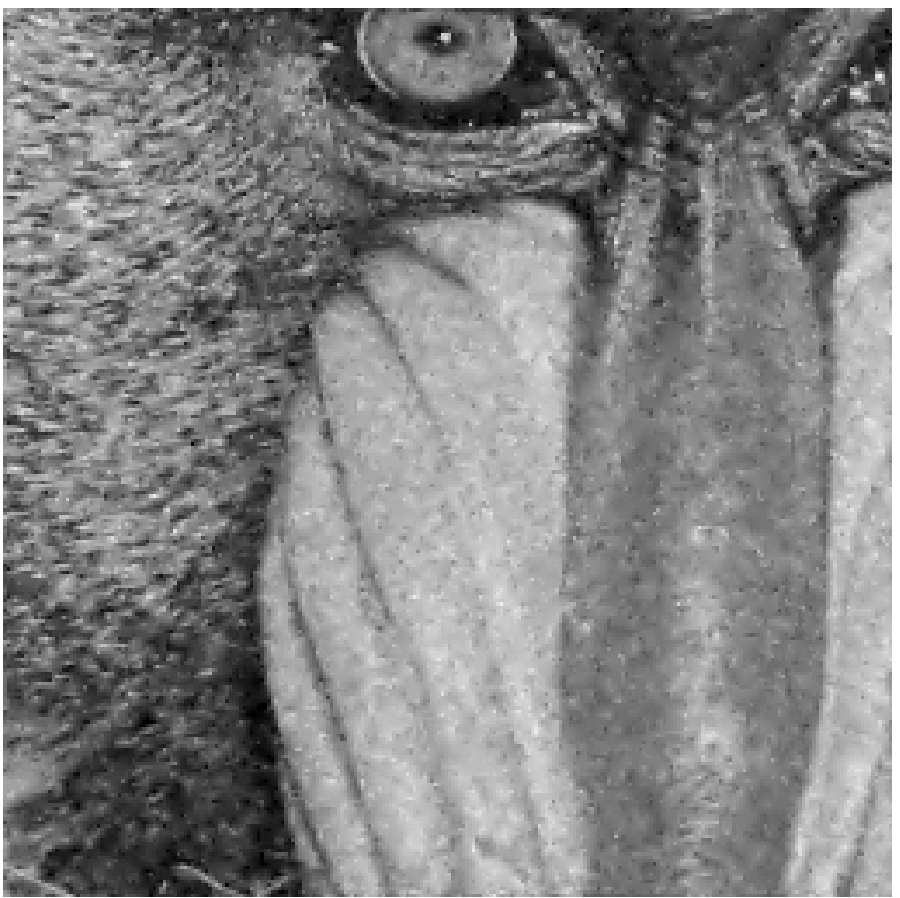}}
  \subfigure{\includegraphics[width=0.19\linewidth,viewport=50 128 178 256,clip]{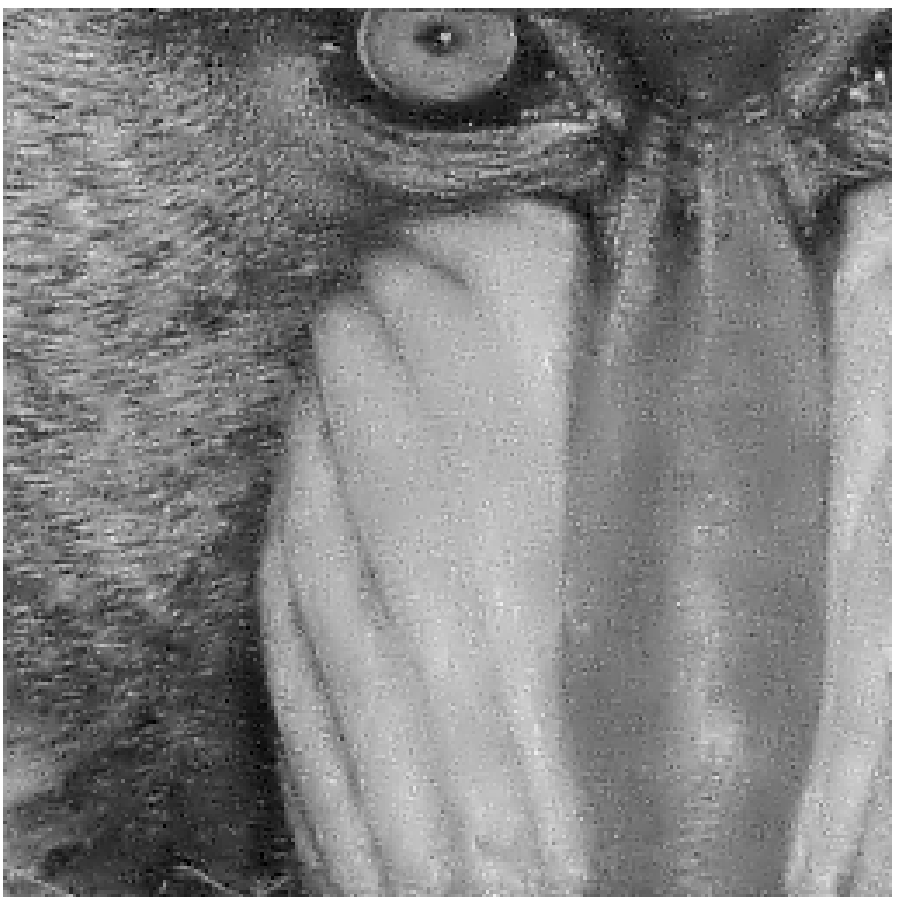}}
  \subfigure{\includegraphics[width=0.19\linewidth,viewport=50 128 178 256,clip]{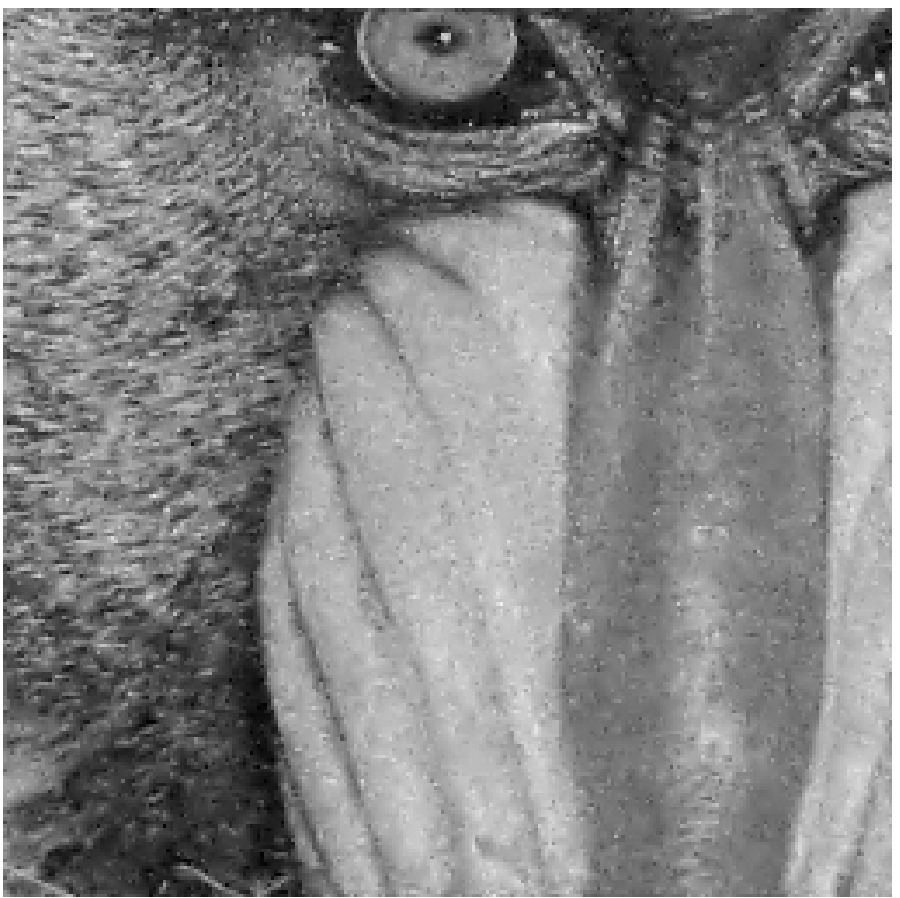}}\\[-0.1cm]
  \setcounter{subfigure}{0}%
  \subfigure{\includegraphics[width=0.19\linewidth,viewport=90 78 218 206,clip]{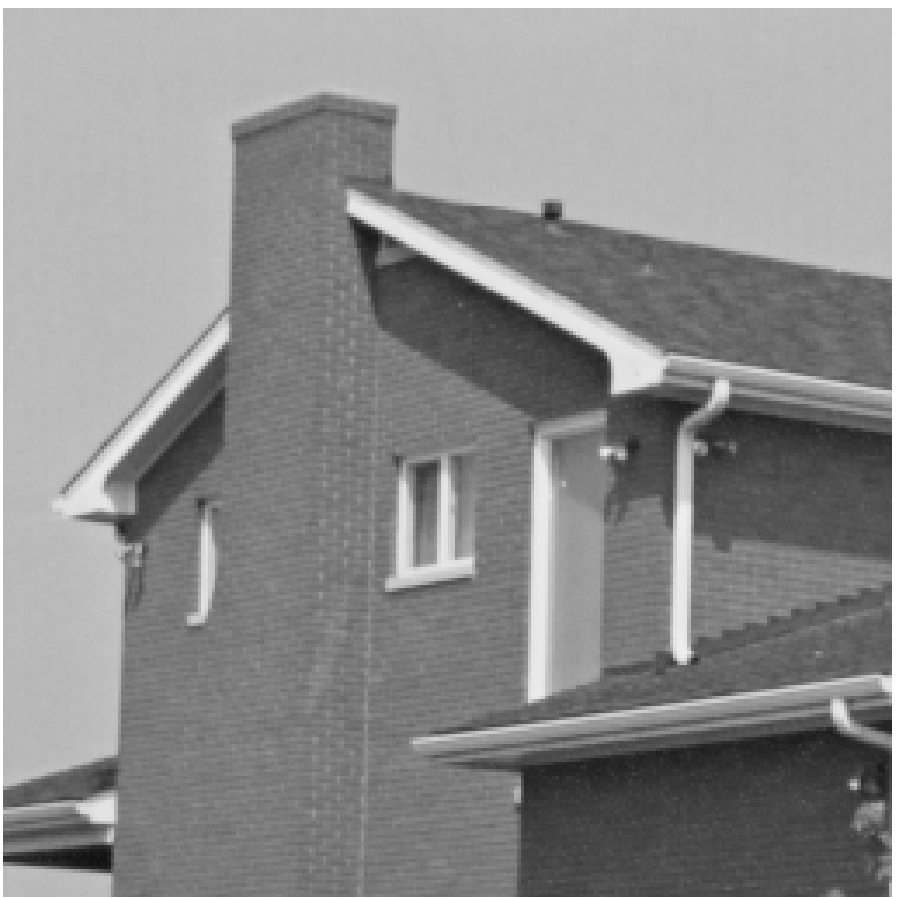}}
  \subfigure{\includegraphics[width=0.19\linewidth,viewport=90 78 218 206,clip]{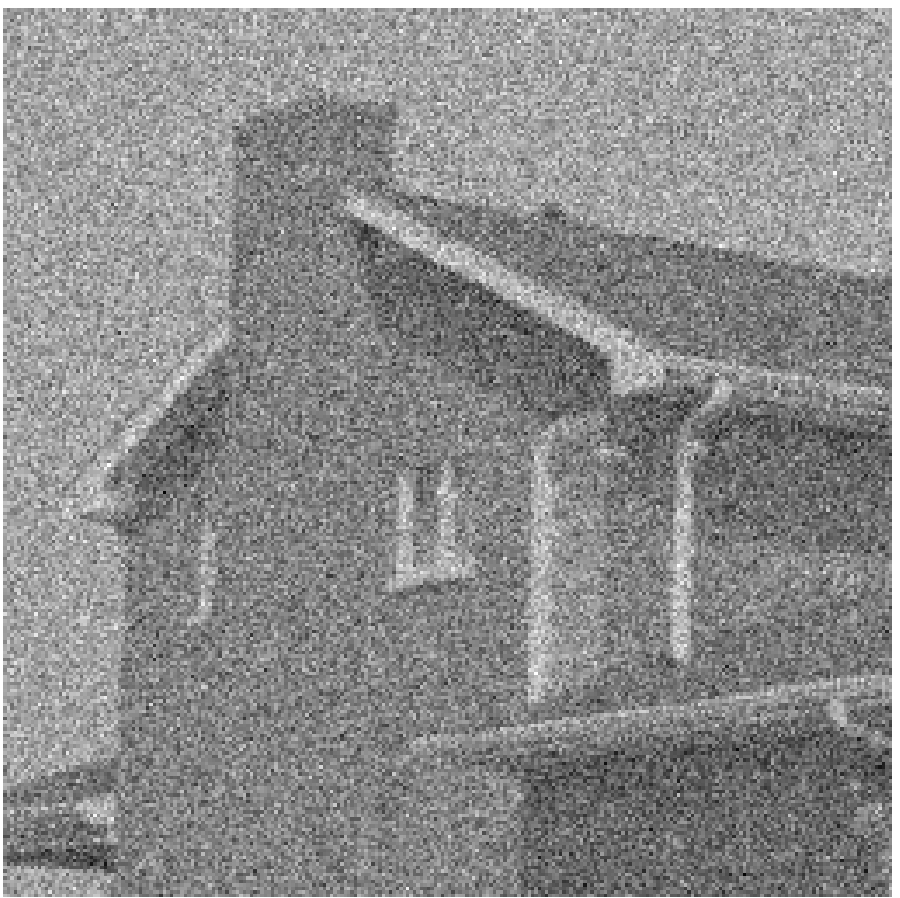}}
  \subfigure{\includegraphics[width=0.19\linewidth,viewport=90 78 218 206,clip]{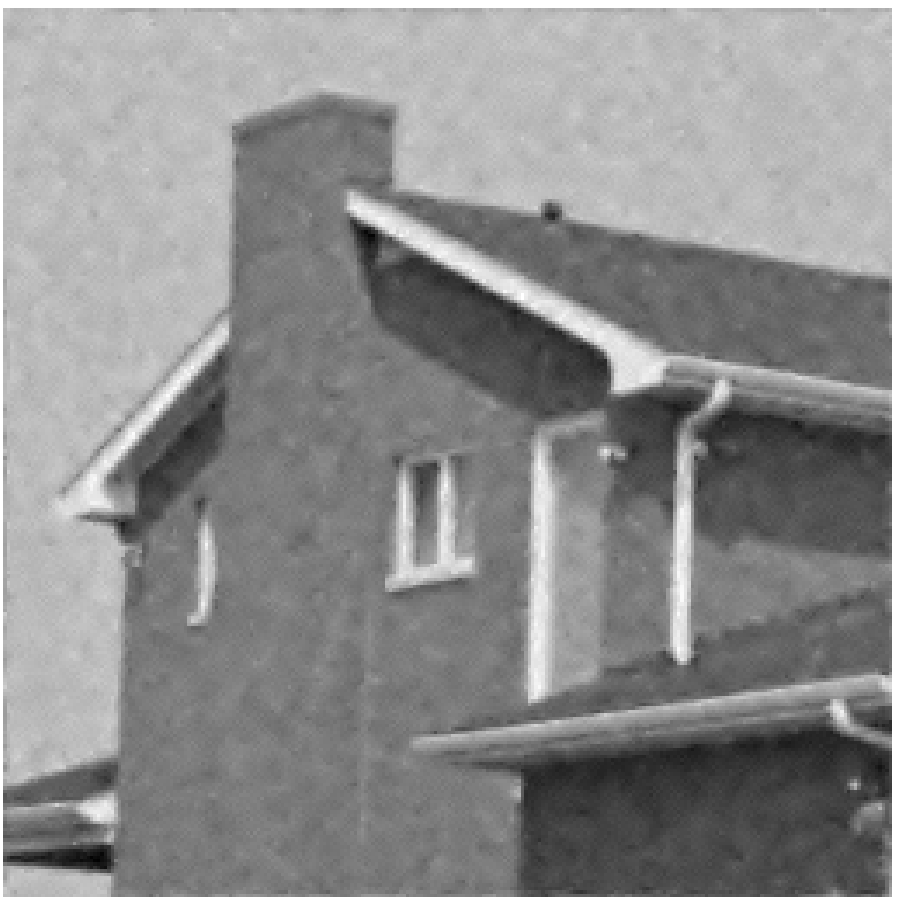}}
  \subfigure{\includegraphics[width=0.19\linewidth,viewport=90 78 218 206,clip]{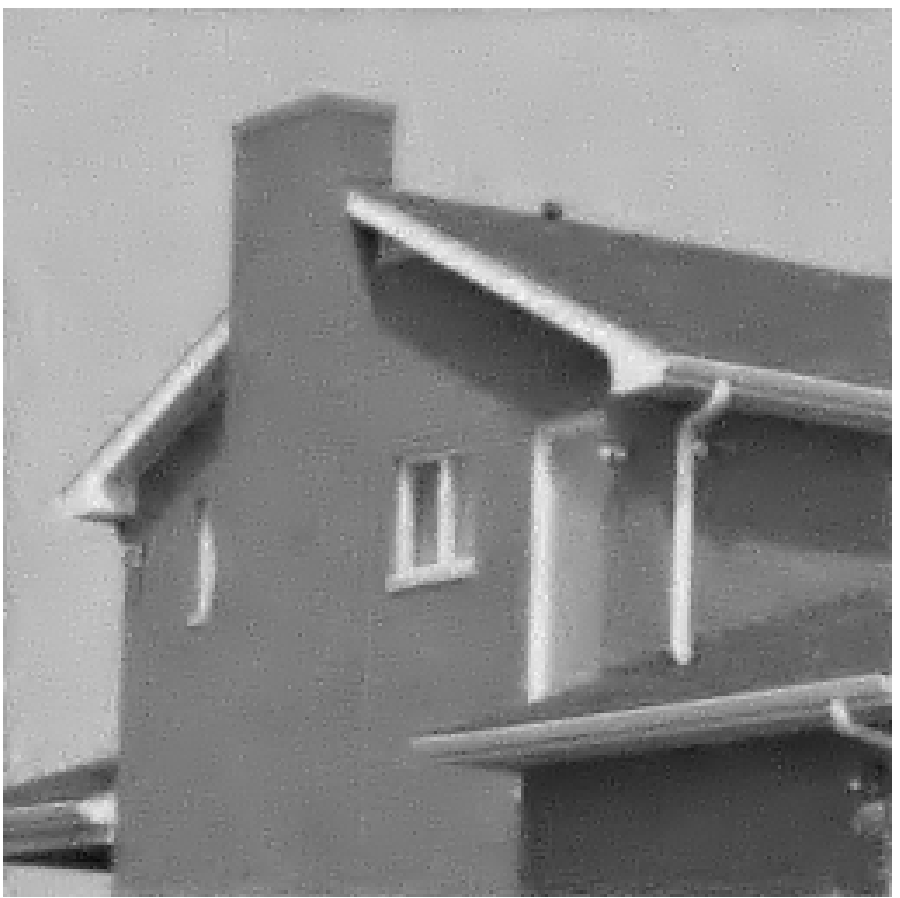}}
  \subfigure{\includegraphics[width=0.19\linewidth,viewport=90 78 218 206,clip]{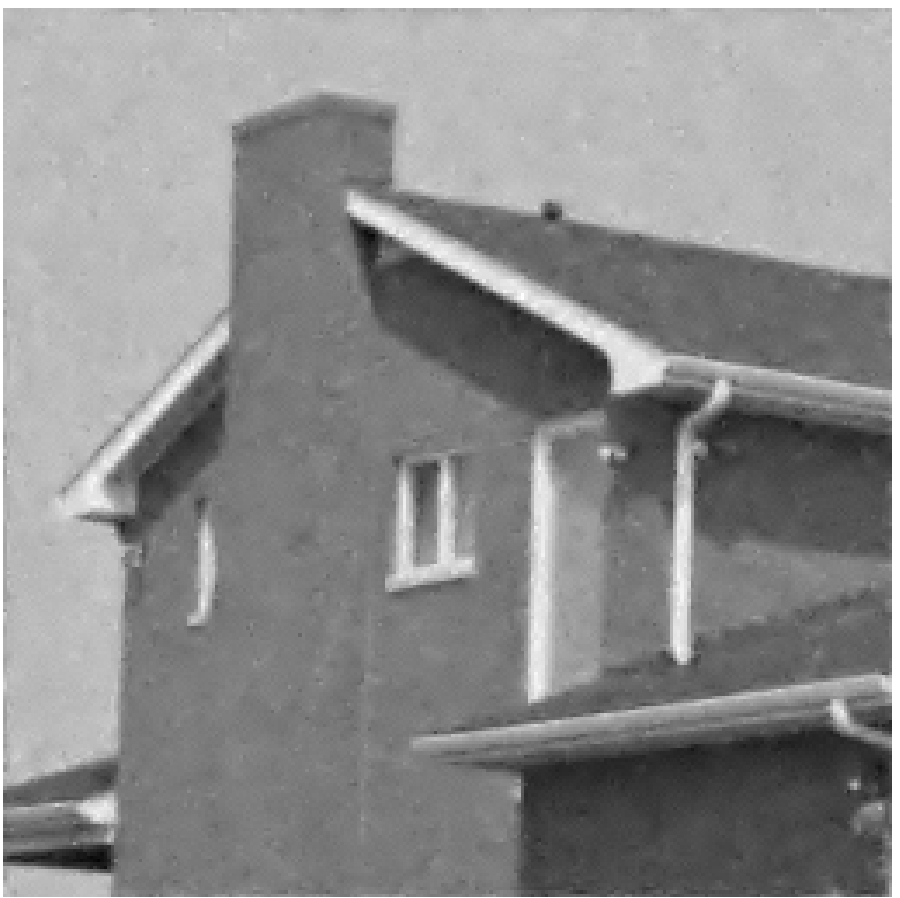}}\\[-0.1cm]
  \setcounter{subfigure}{0}%
  \subfigure[]{\includegraphics[width=0.19\linewidth,viewport=80 88 208 216,clip]{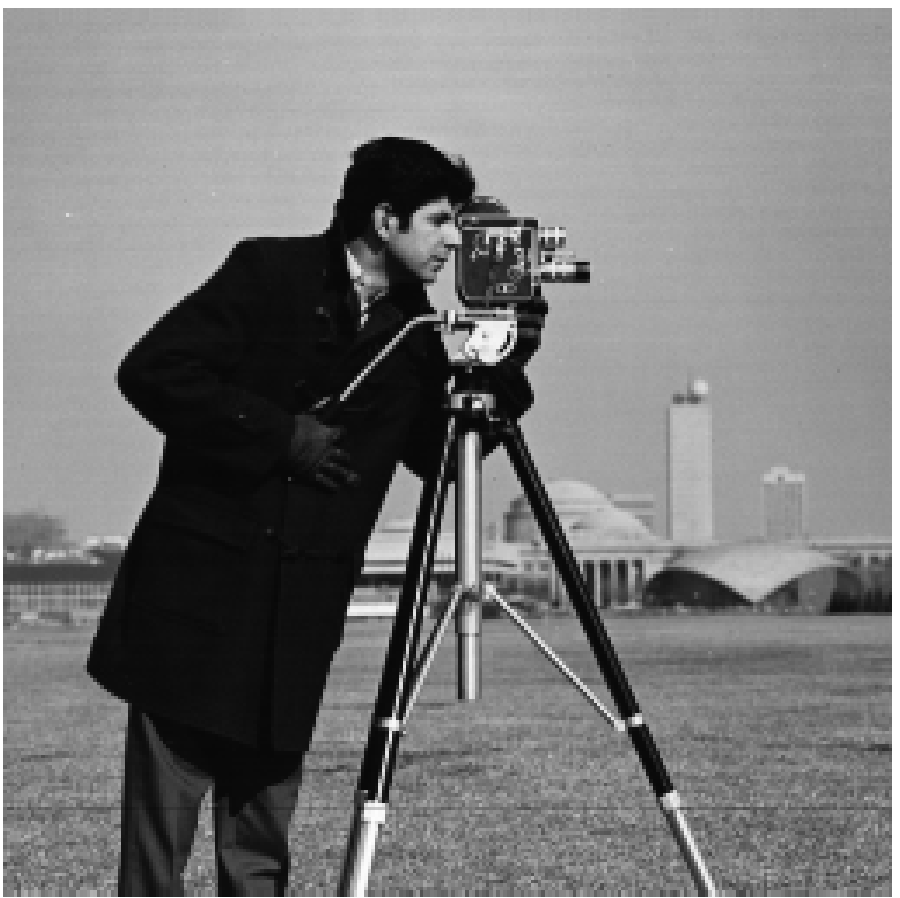}}
  \subfigure[]{\includegraphics[width=0.19\linewidth,viewport=80 88 208 216,clip]{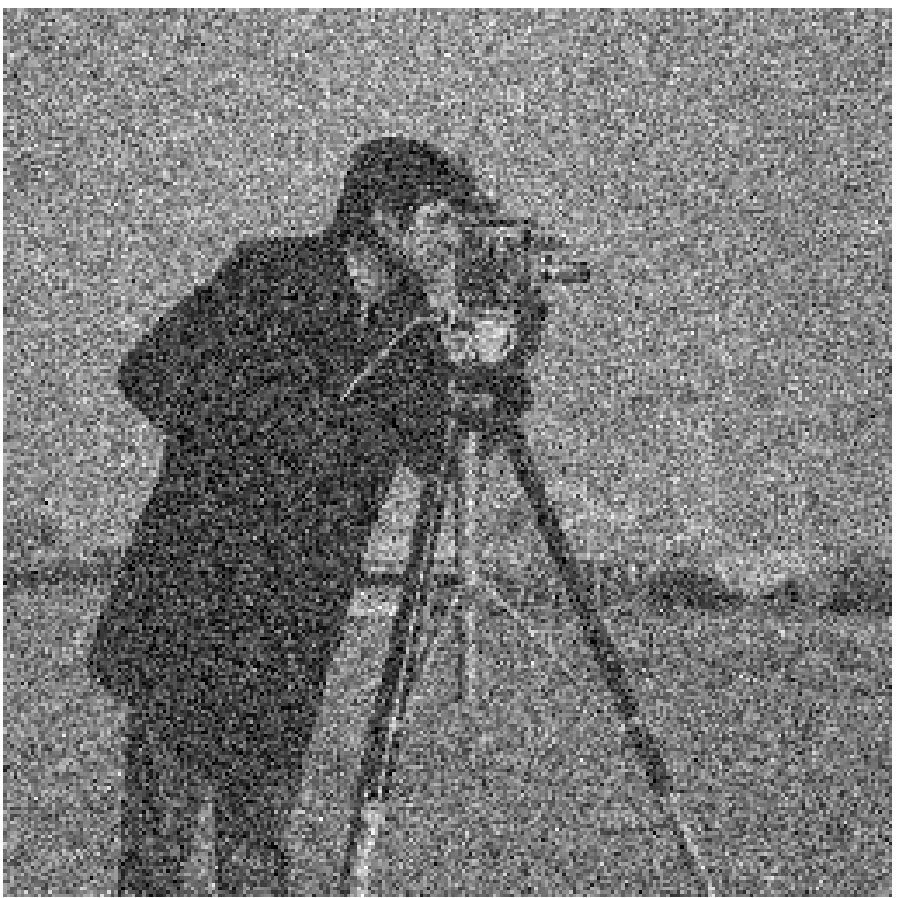}}
  \subfigure[]{\includegraphics[width=0.19\linewidth,viewport=80 88 208 216,clip]{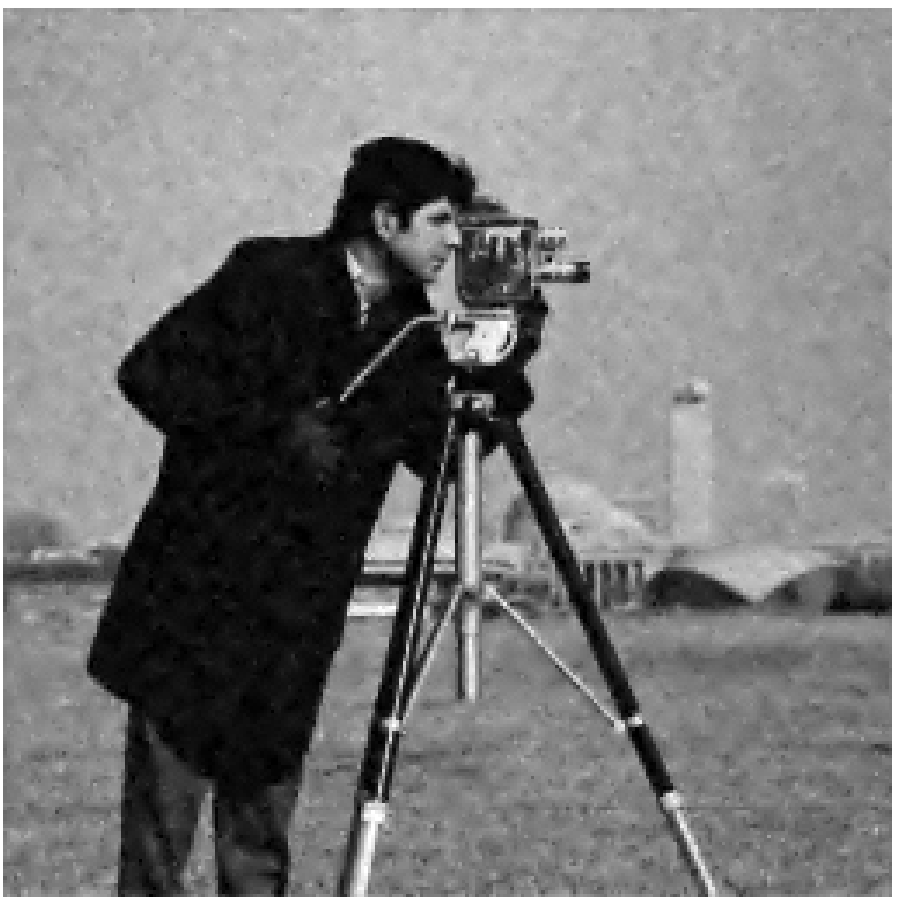}}
  \subfigure[]{\includegraphics[width=0.19\linewidth,viewport=80 88 208 216,clip]{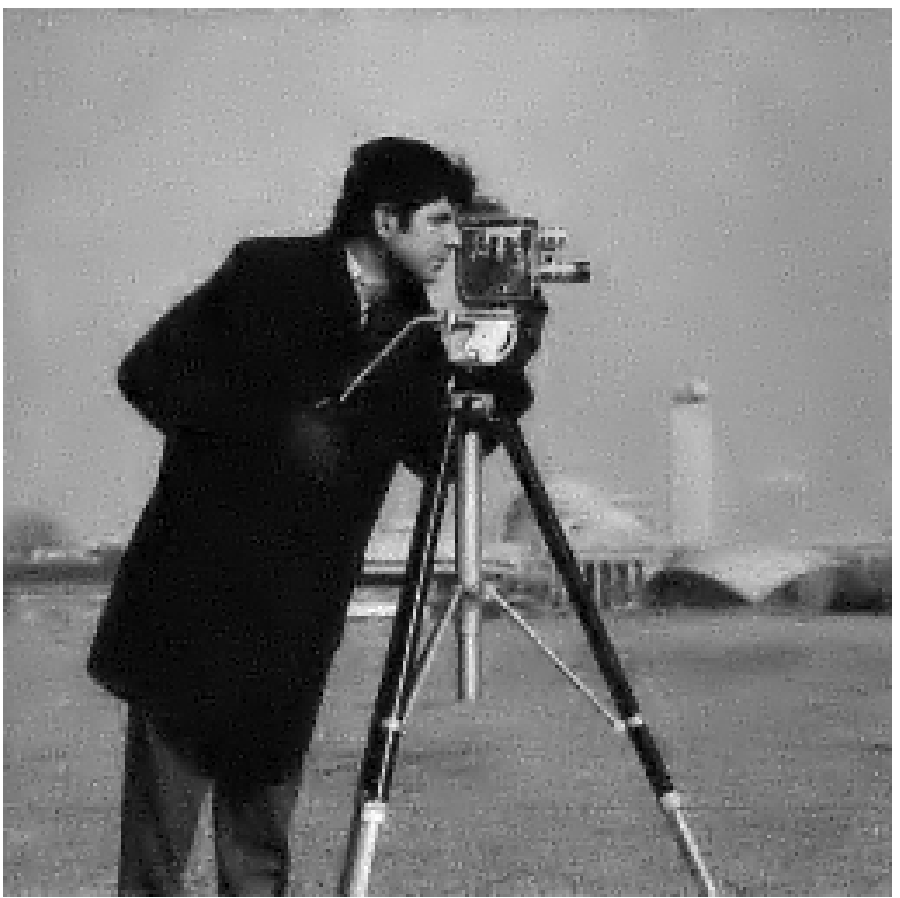}}
  \subfigure[]{\includegraphics[width=0.19\linewidth,viewport=80 88 208 216,clip]{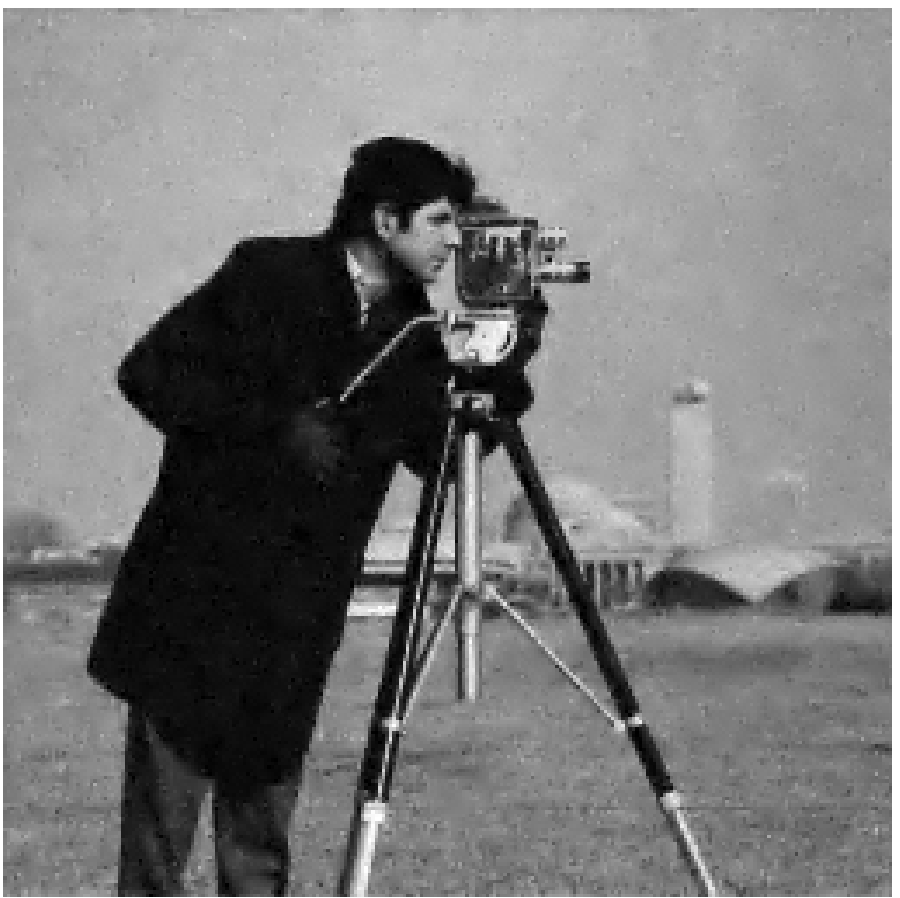}}
  \caption{From top to bottom, a close up on
    \texttt{Mandrill}, \texttt{House} and
    \texttt{Cameraman}. (a) Underlying image $x_0$. (b) Least-square estimate $x_{\mathrm{LS}}(y)$.
    (c) Result with $\lambda^\star$ for one level of decomposition $J=1$,
    (d) for three levels of decomposition $J=3$ using one global parameter, and
    (e) for three levels of decomposition $J=3$ using one parameter per scales.}
  \label{fig:multi_scale}
\end{figure}

\paragraph{Application to compressed sensing}

We illustrate the multi-scale wavelet $\ell_1$-analysis regularization
on a compressed sensing problem.
We therefore consider the forward model
$Y = \Phi x_0 + W \in \RR^P$, $W \sim \Nn(0, \sigma^2 \Id_P)$,
where $x_0$ is a piece-wise multi-scale regular (or approximately so) image
and $\Phi$ is a random matrix.
Here the multi-scale transform $W$ is constructed
from undecimated Daubechies 4 wavelets \cite{daubechies1988orthonormal}.

We have taken a uniformly randomized sub-sampling of a uniform random convolution,
where ($P/N = 0.5$).
The standard deviation has been set to $\sigma = 10$
(for an image $x_0$ with a range $[0,255]$)
such that
the resulting minimum least-square estimate
$x_{\mathrm{LS}}(y) = \Phi^+ y$ has a PSNR given by
$10 \log_{10}(255^2/\norm{x_{\mathrm{LS}}(y)-x_0}_F^2) \approx 16$ dB.

Table \ref{tab:multi_scale} and Figure \ref{fig:multi_scale}
illustrates the multi-scale regularization obtained by minimizing
the $\SUREn=\ASUREn$ (with $A = \Id$) for
three different images $x_0$, known as \texttt{Mandrill},
\texttt{House} and \texttt{Cameraman}, 
and a single realization of $y$ and $\de$. Three
levels of decomposition from $J = 1$ to $3$ are considered.
We consider also to use either one global regularization parameter
or one parameter per scales.
Table \ref{tab:multi_scale} gives the selected optimal vector
of parameters $\lambda^\star$ for each level of decomposition
and their associated performance in terms of $\SUREn$ and PSNR.
We first observe that compared to the global approach, optimizing
one parameter per scale indeed adapts better to the regularity of the image.
For instance, the image
\texttt{Mandrill} contains fine scales with more energy
than \texttt{House}, and then the obtained penalization of the first scale
is smaller for \texttt{Mandrill} than for \texttt{House}.
Visual inspection of these results on Figure~\ref{fig:multi_scale}
illustrates this automatic adaptation.
In the same vein,
with three levels of decomposition, the penalization is less severe
for \texttt{Mandrill} than for \texttt{House}
and \texttt{Cameraman}.
We next observe that increasing the level of decomposition
improves the PSNR when using one parameter per scale, while
this is not the case when a global parameter is used. The gap is more important between
$J=1$ and $J=2$. To assess the minimization of $\SUREAMCn$,
we have compared the $\SUREn$ and the PSNR values at
$0.75\lambda^\star$ and $1.25\lambda^\star$. At the optimal
$\lambda^\star$, the SURE is as expected minimal. Furthermore,
at $\lambda^\star$, the PSNR is either maximal
or not too far from its maximal value, showing that, in this case,
the prediction risk is indeed a good objective in order to
maximize the PSNR.

\section{Conclusion}~

We have proposed a methodology for optimizing
multiple continuous parameters
of a weakly differentiable estimator that attempts solving
a linear ill-posed
inverse problem contaminated by additive white Gaussian noise.
The proposed method selects the parameters minimizing an estimate of
the risk and is driven by an estimate of its gradient.
Classical unbiased estimators of the risk are generally
non-continuous functions of the parameters, so that, their local
variations cannot be used to estimate the gradient of the risk.
These estimators require estimating the degree of freedom by
evaluating the variations of the estimator with respect to the observations.
We have shown that estimating the degree of freedom by finite differences
leads to a weakly differentiable risk estimator. By carefully choosing
the finite differences step and by computing explicitly the (weak) gradient of this estimate,
an asymptotically unbiased estimator of the gradient of the risk is obtained.
This estimator is numerically smooth enough to apply a quasi-Newton method.
An explicit strategy to compute this (weak) gradient is given for
a large class of (iterative) weakly differentiable algorithms.
We exemplified our methodology on several popular proximal splitting methods.
Numerical experiments have demonstrated the wide applicability and scope of the approach.

Our choice of the finite differences step size was essentially guided by
a careful analysis of the soft-thresholding estimator.
Choosing this step size with theoretical guarantees (such as consistency or optimality) in more general cases remains an open question. %, in particular.
Beyond consistency and optimality, the question of quantifying
the influence of the finite differences step on the smoothness of the risk gradient estimates and then
on the performance of quasi-Newton methods is still open.
To deal with parameter space of higher dimensions,
other accumulation Jacobian strategies
could be explored following \cite{griewank2008evaluating}.
Improvements could also be achieved on the settings of the quasi-Newton methods.
In particular, a drawback of our approach is the sensitivity to
local minima of the risk with respect to the collection
of parameters. In some settings, more elaborated
optimization strategies could be employed.
Future work could also focus on the extensions to non-weakly differentiable
estimators and/or inverse problems with non-Gaussian noises.
%Concerning applications, we could investigate extensions of
%these experiments for multiple block sparsity analysis and/or
%spatial adaptation of the smoothing parameters.

\begin{appendix}
\section{Proofs of Section~\ref{sec:risk_optimization}}\label{sec:proofs_risk_optimization}

\begin{proof}[Proposition~\ref{prop:sugar}]
  This is a consequence of the chain rule and linearity of the weak derivative.
  Indeed, $\AEDOFA{\mu}(y, \theta, \epsilon)$ is just the sum of $P$ weakly differentiable functions, and hence is weakly differentiable with the weak derivative with respect to $\theta$ as given.
  Moreover, $\norm{A(\mu(y, \theta) - y)}^2 = \sum_{i=1}^P \pa{\pa{A(\mu(y, \theta) - y)}_i}^2$. Each term $i$ is the composition of a weakly differentiable function $\pa{A(\mu(y, \cdot) - y)}_i$ and $(\cdot)^2$, where the latter is obviously continuously differentiable with bounded derivative, and takes $0$ at the origin. It then follows from the chain rule~\cite[Theorem~4(ii), Section~4.2.2]{EvansGariepy92} that $\pa{A(\mu(y, \cdot) - y)}_i$ is weakly differentiable, and the weak derivative of $\norm{A(\mu(y, \cdot) - y)}^2$ with respect to $\theta$ is indeed
  \[
  2 \JACs{2}{\mu}(y, \theta)^* A^*\!A (\mu(y, \theta) - y) ~.
  \]
%  $\AEDOFA{\mu}(y, \theta, \epsilon)$ and
%  $\ASUREA{\mu}(y, \theta, \epsilon)$,
%  $\theta \mapsto \AEDOFAMC{\mu}(y, \theta, \delta, \epsilon)$ and
%  $\theta \mapsto \ASUREAMC{\mu}(y, \theta, \delta, \epsilon)$
%  are weakly differentiable with respect to $y$ and $\theta$.
%  Differentiating \eqref{aq:a_sure} and \eqref{aq:amc_sure} with respect to
%  $\theta$ almost everywhere gives the proposed formula.
\end{proof}

\begin{proof}[Theorem~\ref{thm:sugar}]
The proof strategy consists in commuting in an appropriate order the different signs (limit, integration and derivation) while checking that our assumptions provide sufficient conditions for this to hold.

Let $V$ a compact subset of $\Theta$, and choose $\varphi \in C_c^1(\Theta)$ with support in $V$. We have
\begin{eqnarray*}
\int_{\Theta} \ARISK{\mu}(\mu_0, \theta) \frac{\partial \varphi(\theta)}{\partial \theta_i} d\theta
	&=& \int_{V} \ARISK{\mu}(\mu_0, \theta) \frac{\partial \varphi(\theta)}{\partial \theta_i} d\theta \\	&=& \int_{V} \EE_W \left[\norm{A (\mu(Y, \theta) - y)}^2\right] \frac{\partial \varphi(\theta)}{\partial \theta_i} d\theta \\
\text{\scriptsize{[Stein Lemma]}}
	&\equal{(S.1)}& \int_{V} \EE_W \left[\ASURE{\mu}(Y, \theta)\right] \frac{\partial \varphi(\theta)}{\partial \theta_i} d\theta \\
%\text{\scriptsize{[Dominated convergence]}}
%	&\equal{(S.2)}& \int_{V} \EE_W \left[\lim_{P \to \infty} \ASURE{\mu}(Y, \theta)\right] \frac{\partial \varphi(\theta)}{\partial \theta_i} d\theta \\
\text{\scriptsize{[\eqref{eq:limsurefd}]}}
	&\equal{(S.2)}& \int_{V} \EE_W\left[\lim_{\epsilon \to 0} \ASUREA{\mu}(Y, \theta, \epsilon)\right] \frac{\partial \varphi(\theta)}{\partial \theta_i} d\theta \\
\text{\scriptsize{[Dominated convergence]}}
	&\equal{(S.3)}& \lim_{\epsilon \to 0}\int_{V}\EE_W\left[\ASUREA{\mu}(Y, \theta, \epsilon)\right] \frac{\partial \varphi(\theta)}{\partial \theta_i} d\theta \\
\text{\scriptsize{[Fubini]}}
	&\equal{(S.4)}& \lim_{\epsilon \to 0}\EE_W\left[\int_{V}\ASUREA{\mu}(Y, \theta, \epsilon) \frac{\partial \varphi(\theta)}{\partial \theta_i} d\theta \right] \\
%	&\equal{(S.4)}- \lim_{\epsilon \to 0}\int_{V}\pa{\frac{\partial}{\partial \theta_i}\EE_W\left[\ASUREA{\mu}(Y, \theta, \epsilon)\right]} \varphi(\theta) d\theta \\
%\text{\scriptsize{[Dominated convergence]}}
\text{\scriptsize{[Weak differentiability, Proposition~\ref{prop:sugar}]}}
	&\equal{(S.5)}& - \lim_{\epsilon \to 0}\EE_W\left[\int_{V}\frac{\partial}{\partial \theta_i}\ASUREA{\mu}(Y, \theta, \epsilon) \varphi(\theta) d\theta\right] \\
\text{\scriptsize{[Proposition~\ref{prop:sugar}]}}
	&\equal{(S.6)}& - \lim_{\epsilon \to 0}\EE_W\left[\int_{V}\pa{\ASUGARA{\mu}(Y ,\theta, \epsilon)}_i \varphi(\theta) d\theta\right]  \\
\text{\scriptsize{[Fubini]}}
	&\equal{(S.7)}& - \lim_{\epsilon \to 0}\int_{V}\EE_W\left[\pa{\ASUGARA{\mu}(Y ,\theta, \epsilon)}_i\right] \varphi(\theta) d\theta  \\
\text{\scriptsize{[Dominated convergence]}}
	&\equal{(S.8)}& - \int_{V}\pa{\lim_{\epsilon \to 0} \EE_W\left[\pa{\ASUGARA{\mu}(Y ,\theta, \epsilon)}_i\right]} \varphi(\theta) d\theta  \\
	&=& - \int_{\Theta}\pa{\lim_{\epsilon \to 0} \EE_W\left[\pa{\ASUGARA{\mu}(Y ,\theta, \epsilon)}_i\right]} \varphi(\theta) d\theta ~. \\
\end{eqnarray*}
From the definition of weak derivative, we get the claimed result on the asymptotic unbiasedness of
$\ASUGARAn$. The asymptotic unbiasedness of the gradient of the finite difference DOF naturally follows
with the same proof strategy by ignoring the two first terms in the decomposition
$\ASUREA{\mu}(\mu_0, \theta, \epsilon) =
\norm{A(\mu(y, \theta) - y)}^2 - \sigma^2 \tr( A^*\!A )
+
2 \sigma^2
\AEDOFA{\mu}(\mu_0, \theta, \epsilon)$.

We now justify each of the steps (S.1)-(S.8). We denote $g_{1,\sigma}$ the Gaussian probability density function of zero-mean and variance $\sigma^2$, and $g_{\sigma}$ its $P$-dimensional version, i.e.~$g_{\sigma}=(g_{1,\sigma})^P$.

%   \begin{align*}
%     \GRAD{2}{\ARISK{\mu}}(\mu_0, \theta))
%     \triangleq&
%     \GRADs{2}{\EE_W \norm{A (\mu(Y, \theta) - y)}^2}\\
%     \equal{\textrm{Stein lemma}}&
%     \GRADs{2}{\EE_W [ \ASURE{\mu}(Y, \theta) ]}\\
%     \equal{(1)}&
%     \GRADs{2}{\EE_W [ \lim_{\epsilon \to 0} \ASUREA{\mu}(Y, \theta, \epsilon) ]}\\
%     \equal{(2)}&
%     \GRADs{2}{\lim_{\epsilon \to 0} \EE_W [ \ASUREA{\mu}(Y, \theta, \epsilon) ]}\\
%     \equal{(3)}&
%     \lim_{\epsilon \to 0} \GRADs{2}{\EE_W [ \ASUREA{\mu}(Y, \theta, \epsilon) ]}\\
%     \equal{(4)}&
%     \lim_{\epsilon \to 0} \EE_W [ \GRAD{2}{\ASUREA{\mu}}(Y, \theta, \epsilon) ] ~.
%   \end{align*}
\begin{enumerate}[label=(S.\arabic*)]
  \item This is Stein lemma which applies owing to Assumption~\ref{assump:L1}. Indeed, $\mu(\cdot, \theta)$ is Lipschitz, hence weakly differentiable and its derivative equals its weak derivative Lebesgue a.e.~\cite[Theorem~1-2, Section~6.2]{EvansGariepy92}. Moreover, we have for any $\theta$
\begin{equation}\label{eq:lipmu}
\norm{\mu(y, \theta) - \mu(y', \theta)} \leq L_1 \norm{y - y'} \Rightarrow \abs{\mu_i(y, \theta) - \mu_i(y', \theta)} \leq L_1 \norm{y - y'} ~,
\end{equation}
and thus, whenever the derivatives of $\mu_i(\cdot, \theta)$ exist, they are bounded by $L_1$. Consequently,
\[
\EE_W\left[ \abs{\frac{\partial \mu_i(Y)}{\partial y_i}} \right] \leq L_1 ~,
\]
i.e.~the weak partial derivatives are essentially bounded.
  \item This is just \eqref{eq:limsurefd} and the arguments justifying hold owing to Assumption~\ref{assump:L1}.
%   \item Let $f_\epsilon(y) = \ASURE{\mu}(y, \theta)$. $\lim_{\epsilon \to 0} f_\epsilon(y)$ obviously exists Lebesgue a.e.~by Stein lemma. In addition, Assumptions~\ref{assump:L1}-\ref{assump:mu} give
%     \[
%     \norm{\mu(y, \theta) - y} \leq \norm{y} + \norm{\mu(y,\theta) - \mu(0,\theta)} \leq (1+L_1)\norm{y} ~.
%     \]
%     Combining this with \eqref{eq:lipmu} leads to
%     \begin{align*}
%       |f_\epsilon(y)| &=
%       \left| \norm{A(\mu(y, \theta) - y)}^2 - \sigma^2 \tr( A^*\!A )
%         +
%         2 \sigma^2
%         \tr \pa{ A \JACs{1}{\mu}(y, \theta) A^* } \right|\\
%       &\leq
%       (1+L_1)^2\norm{A}^2\norm{y}^2_\infty + \sigma^2 \norm{A}^2 + 2\sigma^2 \norm{A}^2 L_1 ~,
%     \end{align*}
%     where we used the von Neumann trace inequality to bound the last term. We then get
%     \begin{align*}
%     \EE_W|f_\epsilon(Y)| 	&\leq (1+L_1)^2\norm{A}^2(\sigma^2+\norm{\mu_0}^2/P) + \sigma^2 \norm{A}^2 + 2\sigma^2 \norm{A}^2 L_1 \\
%     			&\leq (1+L_1)^2\norm{A}^2(\sigma^2+O(1)) + \sigma^2 \norm{A}^2 + 2\sigma^2 \norm{A}^2 L_1 < \infty ~,
%     \end{align*}
%     where we used Assumption~\ref{assump:enermu}. Note that the bound is independent of $\theta$.
%     In plain words, the normalized risk $ \ARISK{\mu}(\mu_0, \theta)$ is bounded as expected. The dominated convergence then applies which yields the claim.
    \item Let $f_\epsilon(y,\theta) = \ASUREA{\mu}(y, \theta, \epsilon)$. From \eqref{eq:limsurefd}, $\lim_{\epsilon \to 0} f_\epsilon(y,\theta)=\ASURE{\mu}(y, \theta)$ exists Lebesgue a.e.~Assumptions~\ref{assump:L1}-\ref{assump:mu} give
    \begin{equation*}\label{eq:lipmuId}
    \norm{\mu(y, \theta) - y} \leq \norm{y} + \norm{\mu(y,\theta) - \mu(0,\theta)} \leq (1+L_1)\norm{y} ~.
    \end{equation*}
    Combining this with \eqref{eq:lipmu} leads to
    \begin{align*}
      |f_\epsilon(y,\theta)| &=
      \left| \norm{A(\mu(y, \theta) - y)}^2 - \sigma^2 \tr( A^*\!A )
    	+
    	2 \sigma^2 \frac{1}{\epsilon}
    	\sum_{i=1}^P (A^*\!A (\mu(y + \epsilon e_i, \theta) - \mu(y, \theta)))_i \right|\\
      &\leq
      \norm{A}^2P\sigma^2\pa{(1+L_1)^2\frac{\norm{y}^2}{P\sigma^2} + 1 + 2 L_1} ~.
    \end{align*}
    Note that the bound is independent of $\theta$. Thus
    \begin{multline*}
    \EE_W\left[\norm{A}^2P\sigma^2\pa{(1+L_1)^2\frac{\norm{y}^2}{P\sigma^2} + 1 + 2 L_1}\right] = \\ \norm{A}^2P\sigma^2\pa{(1+L_1)^2\pa{\frac{\norm{\mu_0}^2}{P\sigma^2}+1} + 1 + 2 L_1} < \infty ~.
    \end{multline*}
%    \[
%    \EE_W|f_\epsilon(Y,\theta)| \leq \norm{A}^2P\sigma^2\pa{(1+L_1)^2\pa{\frac{\norm{\mu_0}^2}{P\sigma^2}+1} + 1 + 2 L_1} < \infty ~,
%    \]
    This bound together with the fact that $\varphi$ is continuously differentiable with compact support in $V$ means that $f_\epsilon\frac{\partial\varphi}{\partial\theta_i}$ is dominated by an integrable function on $\Yy \times V$. The dominated convergence then applies which yields the claim.
    \item Fubini theorem surely applies in view of the integrability just shown at the end of (S.3).
    \item This is a consequence of Proposition~\ref{prop:sugar} and definition of weak differentiability since $\mu(y,\cdot)$ is Lipschitz continuous independently of $y$.
    \item By definition of $\ASUGARA{\mu}$ in Proposition~\ref{prop:sugar}.
    \item Let $f_\epsilon(y,\theta) = (\ASUGARA{\mu}(y, \theta, \epsilon))_i$ and $h(y,\theta) = \pa{2 \JACs{2}{\mu}(y, \theta)^* A^*\!A (\mu(y, \theta) - y)}_i$. By the translation invariance of the convolution product, we have
    \[
    \EE_W[f_\epsilon(Y,\theta)] = 2 (g_\sigma \ast h(\cdot,\theta))(\mu_0) + 2 \sigma^2\sum_{j=1}^P\frac{g_\sigma(\cdot + \epsilon e_i) - g_\sigma}{\epsilon} \ast \pa{\JACs{2}{\mu}(\cdot, \theta)^* \!A^*\!A e_j}_i(\mu_0) ~.
    \]
    Thus
    \begin{align*}
    \abs{(g_\sigma \ast h(\cdot,\theta))(\mu_0)}
		&\leq 2\int_{\Yy} g_\sigma(y-\mu_0) \abs{\pa{\JACs{2}{\mu}(y, \theta)^* A^*\!A (\mu(y, \theta) - y)}_i} dy \\
\text{\scriptsize{[Assumption~\ref{assump:L2}]}}
		&\leq 2L_2\norm{A}^2\int_{\Yy} g_\sigma(y-\mu_0) \norm{\mu(y, \theta) - y}dy \\
\text{\scriptsize{[Assumptions~\ref{assump:L1}-\ref{assump:mu}]}}
		&\leq 2(1+L_1)L_2\norm{A}^2\int_{\Yy} g_\sigma(y-\mu_0) \norm{y} dy \\
		&\leq 2(1+L_1)L_2\norm{A}^2 \EE_W[\norm{y}] dy \\
\text{\scriptsize{[Jensen inequality]}}
		&\leq 2(1+L_1)L_2\norm{A}^2 \EE_W[\norm{y}^2]^{1/2} dy \\
		&\leq 2(1+L_1)L_2\norm{A}^2 \pa{\norm{\mu_0}^2 + P\sigma^2}^{1/2} < \infty ~. \\
    \end{align*}
    For the second term, we have
    \begin{align*}
    &\abs{\frac{g_\sigma(\cdot + \epsilon e_i) - g_\sigma}{\epsilon} \ast \pa{\JACs{2}{\mu}(\cdot, \theta)^* \!A^*\!A e_j}_i(\mu_0)} \\
    		&\leq \int_{\Yy}\abs{\frac{g_\sigma(y-\mu_0 + \epsilon e_i) - g_\sigma(y-\mu_0)}{\epsilon}} \abs{\pa{\JACs{2}{\mu}(y, \theta)^* \!A^*\!A e_j}_i} dy\\
\text{\scriptsize{[Assumption~\ref{assump:L2}]}}
		&\leq L_2\norm{A}^2\int_{\Yy}\abs{\frac{g_\sigma(y-\mu_0 + \epsilon e_i) - g_\sigma(y-\mu_0)}{\epsilon}} dy \\
		&\leq L_2\norm{A}^2\int_{\RR}\abs{\frac{g_{1,\sigma}(t-(\mu_0)_i + \epsilon) - g_{1,\sigma}(t-(\mu_0)_i)}{\epsilon}} dt \\
\text{\scriptsize{[Taylor]}}
		&\leq L_2\norm{A}^2\int_{\RR}\int_{0}^1\abs{g'_{1,\sigma}(t-(\mu_0)_i+\tau)} dtd\tau \\
\text{\scriptsize{[Fubini]}}
		&\leq L_2\norm{A}^2\int_{\RR}\abs{g'_{1,\sigma}(t)} dt < \infty ~. \\
    \end{align*}
    In view of these bounds, and since $\varphi$ is compactly supported in $V$, integrability of $f_\epsilon\varphi$ on $\Yy \times V$ is ensured, whence the claimed result follows.
    \item Let $f_\epsilon$ defined as in (S.7). We have just shown that the integrand in $\theta$, i.e.~$\EE_W[f_\epsilon(Y,\cdot))_i]\varphi$, is dominated by a function that is integrable on $V$. It remains to check that its limit exists Lebesgue a.e. But this is yet again an application of the dominated convergence theorem to the sequence $f_\epsilon$ as an integrand with respect to the Gaussian measure $g_\sigma(y) dy$, which allows to deduce that $\lim_{\epsilon \to 0} \EE_W[f_\epsilon(Y,\theta)\varphi(\theta)] = \EE_W[\lim_{\epsilon \to 0} f_\epsilon(Y,\theta)\varphi(\theta)]$.
    \end{enumerate}
This completes the proof.
\end{proof}

\begin{proof}[Proposition~\ref{prop:st_stats}]
  For a fixed $\lambda$, it can be shown similarly to \cite[Theorem~4(iii), Section~4.2.2]{EvansGariepy92}, that $\ST(\cdot,\lambda)$ is weakly differentiable and that its weak Jacobian $h(y) = \JACs{2}{\ST}(y, \lambda)$ is diagonal, with diagonal elements, for
  $1 \leq i \leq P$,
  \begin{align}\label{eq:diff_st}
    % \forall 1 \le i \le N, \quad
    h(y)_{i} &= \choice{
      +1 & \text{if}\quad y_i \leq -\lambda\\
      0 & \text{if}\quad -\lambda < y_i < \lambda\\
      -1 & \text{otherwise}
    }~.
  \end{align}
%  Remark that $h$ is discontinue at positions $y_i = \pm\lambda$ where
%  the ST is actually non-differentiable.
  We next define, for a fixed $\lambda$, the quantity
  $h'(y, \epsilon) = \GRAD{2}{\EDOFA{\ST}}(y, \lambda, \epsilon)$.
  Using Proposition \ref{prop:sugar} and the fact that $\epsilon < 2\lambda$ give
  \begin{align*}
    h'(y, \epsilon)
    =
    \sum_{i=1}^P \frac{h(y + \epsilon e_i)_i - h(y)_i}{\epsilon} =
    \sum_{i=1}^P
    \choice{
      0 & \text{if}\quad y_i < -\lambda - \epsilon\\
      -1/\epsilon & \text{if}\quad -\lambda - \epsilon < y_i < -\lambda\\
      0 & \text{if}\quad -\lambda < y_i < \lambda - \epsilon\\
      % 0 & \text{if}\quad y < \lambda - \epsilon\\
      -1/\epsilon & \text{if}\quad \lambda - \epsilon < y_i < \lambda\\
      0 & \text{if}\quad \lambda < y_i\\
    }~.
  \end{align*}
  Computing the expectation and the variance of $h'(Y, \epsilon)$ in closed-form
  with truncated Gaussian statistics,
  and using the fact that $h$ is separable in its arguments,
  give the proposed formula.
\end{proof}

\begin{proof}[Theorem~\ref{thm:cons_sugar}]
  For $P$ big enough, $\hat{\epsilon}(P) < 2\lambda$ since $\lim_{P \to \infty} \hat{\epsilon}(P) = 0$.
  Using the notations in the proof of Lemma \ref{prop:st_stats} leads to
  \begin{align*}
    & \SUGARA{\ST}(y, \lambda, \epsilon) =
    2 h(y)^* (\ST(y, \lambda) - y) + 2 \sigma^2 h'(y, \hat{\epsilon}(P))~.
  \end{align*}
  The Cauchy-Schwartz inequality implies that
  \begin{align*}
  \VV_W \left[ \frac{1}{P} \SUGARA{\ST}(Y, \lambda, \epsilon) \right]^{1/2}
  \leq \quad &
  2 \VV_W \left[ \frac{1}{P} h(y)^* (\ST(Y, \lambda) - Y) \right]^{1/2}\\
  \hspace{2cm}
  &
  \hspace{2cm}
  +
  2 \sigma^2 \VV_W \left[ \frac{1}{P} h'(Y, \hat{\epsilon}(P)) \right]^{1/2}~.
  \end{align*}
  Since $x \mapsto \sqrt{\pi}\erf\pa{x/a}$ is Lipschitz continuous
  with a constant of $2/a$, Lemma \ref{prop:st_stats} yields
  \begin{align*}
    \VV_W \!\!\left[ \frac{1}{P}  h'(Y, \hat{\epsilon}(P)) \right]
    &\leq
    \frac{\sqrt{2}}{\sqrt{\pi}\sigma P\hat{\epsilon}(P)}
    %+
    %\frac{2}{\pi \sigma^2 P}
    ~.
  \end{align*}
By assumption, we have $\lim_{P \to \infty} P^{-1}\hat{\epsilon}(P)^{-1} = 0$
and then the variance of $\frac{1}{P} h'(Y, \hat{\epsilon}(P))$ vanishes to zero.
Next, remark that
\begin{align*}
  h(y)^* (\ST(y, \lambda) - y)
  =
  \lambda \#\{|y| > \lambda\}
\end{align*}
%\GP{I am not able to get the proof bellow, when switching from $y$ to $Y$. In particular, apparently the variance $p$ depends on $i$, if this is the case, it seems that some sort of summation bound is needed.  } 
where $\#\{|y| > \lambda\}$ denotes
the number of entries of $|y|$ greater than $\lambda$.
We have $\#\{|Y_i| > \lambda\} \sim_{\mathrm{iid}} \mathrm{Bernoulli}(p_i)$
whose variance is $p_i (1 - p_i)$,
where
$p_i = \frac{1}{2}\left(
  \erf\pa{\frac{(\mu_0)_i + \lambda}{\sqrt{2}\sigma}}
  -
  \erf\pa{\frac{(\mu_0)_i - \lambda}{\sqrt{2}\sigma}}
\right)$.
It follows that
$\VV_W [ \#\{|Y| > \lambda\} ] = \sum_{i=1}^P p_i (1 - p_i) \leq P$ and hence
%\GP{what is $y$ in what follow ? Is it a random vector ? Is it $Y_i$ (in which case it depends on $i$ ?}
\begin{align*}
  \lim_{P \to \infty} \VV_W \left[ \frac{1}{P} h(Y)^* (\ST(Y, \lambda) - Y) \right]
  =
  \lim_{P \to \infty} \VV_W \left[ \frac{1}{P} \lambda \#\{|Y| > \lambda\} \right]
  =
  0~.
\end{align*}
%Eventually,
%Theorem \ref{thm:sugar}
%tells us that the random variables
%$\SUGARA{\ST}(Y, \lambda, \epsilon)$ and $h'(Y, \hat{\epsilon}(P))$
%are asymptotically with $P$ unbiased estimators of
%$\GRAD{2}{\RISK{\ST}}(\mu_0, \lambda)$
%and
%$\GRAD{2}{\DOF{\ST}}(\mu_0, \lambda)$.
Consistency (i.e.~convergence in probability)
follows from traditional arguments by invoking Chebyshev inequality and
using asymptotic unbiasedness (Theorem \ref{thm:sugar}) and vanishing variance.
\end{proof}

\begin{proof}[Proposition~\ref{prop:st_mse}]
  Developing the mean squared error in terms of bias and variance give the
  first part of the proposition. Lemma \ref{prop:st_stats} and the fact that
  $\lim_{\epsilon \to 0} \EE_W \GRAD{2}{\EDOF{\ST}}(Y, \lambda, \epsilon) =
  \GRAD{2}{\DOF{\ST}}(\mu_0, \lambda)$ concludes the second part.
\end{proof}

\section{Regularity of the proximal operator of a gauge}\label{sec:reggauge}
We first provide a glimpse of gauges.

\begin{defn}[Gauge]\label{defn:convex-gauge}
  Let $\Cc$ be a non-empty closed convex set containing the origin.
  The \emph{gauge} of $\Cc$ is the function $\gamma_{\Cc}$ defined by
  \begin{equation*}
    \gamma_{\Cc}(y) =
    \inf \enscond{\omega > 0}{y \in \omega \Cc} .
  \end{equation*}
  As usual, $\gamma_{\Cc}(y) = + \infty$ if the infimum is not attained.
\end{defn}

\begin{defn}[Polar set]\label{defn:convex-polarset}
  Let $\Cc$ be a non-empty convex set. The set $\Cc^\circ$ given by
  \begin{equation*}
    \Cc^\circ = \enscond{z \in \RR^N}{\dotp{z}{x} \leq 1 \text{ for all } x \in \Cc}
  \end{equation*}
  is called the \emph{polar} of $\Cc$. $\Cc^\circ$ is a non-empty closed convex set containing the origin, and if $\Cc$ is closed and contains the origin as well, $\Cc^{\circ\circ}=\Cc$.
\end{defn}
  
We now summarize some key properties that will be needed in the main proof.
\begin{lem}\label{lem:gaugeprop}
Let $\Cc$ be a non-empty closed convex set containing the origin. The following assertions hold.
\begin{enumerate}[label={(\roman{*})}, ref=]
\item $\gamma_{\Cc}$ is a non-negative, closed, convex and positively homogenenous function.
\item $\Cc$ is the unique closed convex set containing the origin such that
\[
\Cc = \enscond{y \in \Yy}{\gamma_{\Cc}(y) \leq 1} .
\]
\item $\gamma_{\Cc}$ is bounded and coercive if, and only if, $\Cc$ is compact and contains the origin as an interior point.
\item The gauge of $\Cc$ and the support function $\sigma_{\Cc^\circ}(y)=\max_{z \in \Cc^\circ} \dotp{y}{z}$ coincide, i.e.
\begin{equation*}
  \gamma_{\Cc} = \sigma_{\Cc^\circ} ~.
\end{equation*}
\end{enumerate}
\end{lem}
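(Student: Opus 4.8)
The plan is to establish the four assertions in the order stated, since the later ones lean on the earlier ones, working throughout in $\RR^N$ and using freely that $\Cc$ is closed, convex, and contains the origin. For (i), non-negativity is immediate from the constraint $\omega>0$ in Definition~\ref{defn:convex-gauge}. Positive homogeneity follows by the change of variable $\omega'=\omega/t$ in the infimum: for $t>0$, one has $ty\in\omega\Cc \iff y\in(\omega/t)\Cc$, whence $\gamma_{\Cc}(ty)=t\gamma_{\Cc}(y)$. Subadditivity comes from convexity of $\Cc$: if $y_k\in\omega_k\Cc$ for $k=1,2$, then $(y_1+y_2)/(\omega_1+\omega_2)$ is a convex combination of $y_1/\omega_1$ and $y_2/\omega_2$, hence lies in $\Cc$, so $\gamma_{\Cc}(y_1+y_2)\leq\omega_1+\omega_2$; taking infima over admissible $\omega_k$ gives $\gamma_{\Cc}(y_1+y_2)\leq\gamma_{\Cc}(y_1)+\gamma_{\Cc}(y_2)$. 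Positive homogeneity together with subadditivity yields convexity, and closedness (lower semicontinuity) I would deduce once (ii) is proved, since then every sublevel set $\enscond{y}{\gamma_{\Cc}(y)\leq t}=t\Cc$ is closed by homogeneity.

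For (ii), the inclusion $\Cc\subseteq\enscond{y}{\gamma_{\Cc}(y)\leq 1}$ is clear, as $y\in\Cc=1\cdot\Cc$ forces $\gamma_{\Cc}(y)\leq 1$. Conversely, assume $\gamma_{\Cc}(y)\leq 1$. If $\gamma_{\Cc}(y)<1$, I pick $\omega\in(\gamma_{\Cc}(y),1)$ with $y/\omega\in\Cc$, and write $y=\omega(y/\omega)+(1-\omega)\cdot 0\in\Cc$ by convexity and $0\in\Cc$. If $\gamma_{\Cc}(y)=1$, I take $\omega_n\downarrow 1$ with $y/\omega_n\in\Cc$; since $y/\omega_n\to y$ and $\Cc$ is closed, $y\in\Cc$. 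This boundary case is the one point where closedness of $\Cc$ is genuinely needed. Uniqueness then follows for free: if two closed convex sets containing the origin share the same gauge, they coincide with the common unit sublevel set, hence are equal.

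For (iii), since $\gamma_{\Cc}$ is positively homogeneous, ``bounded'' means finite-valued (equivalently bounded on the unit sphere, hence on every compact set). Finiteness everywhere is equivalent to $\Cc$ being absorbing, which for a convex set containing $0$ is equivalent to $0\in\mathrm{int}(\Cc)$. Coercivity, again by homogeneity, amounts to $\gamma_{\Cc}$ being bounded away from zero on the unit sphere, i.e.\ to an estimate $\gamma_{\Cc}(y)\geq c\norm{y}$ with $c>0$; this holds iff $\Cc$ is bounded, since $\Cc\subseteq \enscond{z}{\norm{z}\leq R}$ gives $\gamma_{\Cc}(y)\geq\norm{y}/R$, whereas an unbounded $\Cc$ produces directions along which the gauge stays $\leq 1$. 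Combining the two equivalences, $\gamma_{\Cc}$ is bounded and coercive iff $0\in\mathrm{int}(\Cc)$ and $\Cc$ is bounded, which together with closedness of $\Cc$ is exactly compactness of $\Cc$ with the origin as an interior point.

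For (iv), I would invoke the bipolar identity $\Cc^{\circ\circ}=\Cc$ recorded in Definition~\ref{defn:convex-polarset}. By definition of the polar, $c\in\Cc=\Cc^{\circ\circ}$ iff $\dotp{c}{z}\leq 1$ for all $z\in\Cc^\circ$. Hence for $\omega>0$, the condition $y/\omega\in\Cc$ is equivalent to $\dotp{y}{z}\leq\omega$ for all $z\in\Cc^\circ$, i.e.\ to $\sigma_{\Cc^\circ}(y)\leq\omega$. Therefore $\gamma_{\Cc}(y)=\inf\enscond{\omega>0}{\sigma_{\Cc^\circ}(y)\leq\omega}=\sigma_{\Cc^\circ}(y)$, where the last equality uses that $\sigma_{\Cc^\circ}(y)\geq 0$ because $0\in\Cc^\circ$; the degenerate cases $\sigma_{\Cc^\circ}(y)\in\{0,+\infty\}$ are handled directly. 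Apart from the limiting argument in the boundary case $\gamma_{\Cc}(y)=1$ of (ii), which I expect to be the only delicate step, all the remaining verifications are routine manipulations of the defining infimum combined with the bipolar theorem.
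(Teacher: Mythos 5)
Your proof is correct, but it takes a genuinely different route from the paper: the paper disposes of all four assertions by citation to Hiriart-Urruty and Lemar\'echal (Theorem~V.1.2.5, Corollary~V.1.2.6 and Proposition~V.3.2.4 of \cite{hiriart1996convex}), whereas you reprove everything directly from the defining infimum and the bipolar identity. Your self-contained argument makes the logical dependencies explicit (in particular, that closedness of $\Cc$ is only genuinely needed at the boundary case $\gamma_{\Cc}(y)=1$ in (ii), and that (iv) is nothing but the bipolar theorem unwound), at the cost of length; the paper's citation is shorter but opaque. One small imprecision worth fixing: in (i) you claim every sublevel set $\enscond{y}{\gamma_{\Cc}(y)\leq t}$ equals $t\Cc$, which is only valid for $t>0$. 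For $t=0$ the set $\enscond{y}{\gamma_{\Cc}(y)=0}$ can be strictly larger than $\{0\}$ (e.g.\ when $\Cc$ is an unbounded set such as a half-space containing the origin); it is instead the intersection $\bigcap_{\epsilon>0}\epsilon\Cc$, which is still closed, so lower semicontinuity survives, but the identification with $t\Cc$ should be restricted to $t>0$. Everything else — the convex-combination argument for subadditivity, the two cases $\gamma_{\Cc}(y)<1$ and $\gamma_{\Cc}(y)=1$ in (ii), the translation of boundedness and coercivity into absorbency and boundedness of $\Cc$ in (iii), and the handling of the degenerate values of $\sigma_{\Cc^\circ}$ in (iv) — is sound.
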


\begin{proof}
(i)-(ii) follow from \cite[Theorem~V.1.2.5]{hiriart1996convex}. (iii) is a consequence of \cite[Theorem~V.1.2.5(ii) and Corollary~V.1.2.6]{hiriart1996convex}. (iv) \cite[Proposition~V.3.2.4]{hiriart1996convex}.
\end{proof}

We are now equiped to prove our regularity result.
\begin{prop}\label{prop:lipproxgauge}
Let $\Cc$ be a compact convex set containing the origin as an interior point, i.e. a convex body, and $G=\gamma_\Cc$ is its gauge. For any $\theta > 0$, $\theta' > 0$ and any $y \in \Yy$, the following holds
\[
\norm{\Prox_{\theta G}(y) - \Prox_{\theta' G}(y)} \leq L_2 |\theta - \theta'|
\]
for some constant $L_2 > 0$ independent of $y$, i.e. for any $y$, $\theta \mapsto \Prox_{\theta G}(y)$ is Lipschitz continuous on $]0,+\infty[$.
\end{prop}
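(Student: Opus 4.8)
The plan is to reduce the statement to a uniform Lipschitz estimate for the Euclidean projection onto a dilated convex body, and then to extract the crucial cancellation from the radial invariance of the projection. First I would use Lemma~\ref{lem:gaugeprop}(iv) to write $G = \gamma_{\Cc} = \sigma_{\Cc^\circ}$, so that $\theta G = \theta\sigma_{\Cc^\circ} = \sigma_{\theta\Cc^\circ}$ for every $\theta > 0$. Since the Legendre--Fenchel conjugate of the support function of a closed convex set is the indicator of that set, $(\theta G)^* = \iota_{\theta\Cc^\circ}$, and Moreau's identity $\Prox_{\theta G} + \Prox_{(\theta G)^*} = \Id$ gives $\Prox_{\theta G}(y) = y - P_{\theta\Cc^\circ}(y)$, where $P_C$ denotes the Euclidean projection onto the closed convex set $C$. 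Writing $K = \Cc^\circ$, which is itself a convex body (compact, with the origin in its interior) by polarity (Definition~\ref{defn:convex-polarset}), the claimed bound becomes $\norm{P_{\theta K}(y) - P_{\theta' K}(y)} \leq L_2|\theta - \theta'|$, uniformly in $y$.

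Next I would exploit the dilation identity $P_{\theta K}(y) = \theta\, P_K(y/\theta)$, which follows from the change of variable $z = \theta w$ in the projection problem, and set $g(\theta) = \theta\, P_K(y/\theta)$ for $\theta > 0$. Since $P_K$ is non-expansive, $g$ is locally Lipschitz on $]0,+\infty[$, hence absolutely continuous and differentiable at almost every $\theta$; it therefore suffices to bound $\norm{g'(\theta)}$ by a constant independent of $y$ and to integrate. Computing the one-sided difference quotient at a point of differentiability $\theta_0$, with $u_0 = y/\theta_0$ and $w_0 = P_K(u_0) \in K$, I would obtain $g'(\theta_0) = w_0 + D(-u_0)$, where $D(d) = \lim_{t\to 0^+} t^{-1}\big(P_K(u_0 + t d) - P_K(u_0)\big)$ is the directional derivative of $P_K$ at $u_0$; its existence in the relevant direction is guaranteed precisely because $g'(\theta_0)$ exists.

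The heart of the argument, and the step where the naive estimate $\norm{P_K(y/\theta) - P_K(y/\theta')} \leq \norm{y}\,|1/\theta - 1/\theta'|$ fails to be uniform, is to control $D(-u_0)$. Here I would use two facts: (i) $D$ is $1$-Lipschitz in its direction, inherited from the non-expansiveness of $P_K$; and (ii) the projection is constant along the outward normal ray, i.e.\ $P_K\big(w_0 + s(u_0 - w_0)\big) = w_0$ for all $s \geq 0$, so that $D$ vanishes in the normal direction $n_0 = u_0 - w_0$, namely $D(\pm n_0) = 0$. Combining (i) and (ii), $\norm{D(-u_0)} = \norm{D(-u_0) - D(-n_0)} \leq \norm{-u_0 + n_0} = \norm{w_0} \leq R$, where $R = \sup_{w\in\Cc^\circ}\norm{w} < \infty$. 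Since also $\norm{w_0} \leq R$, this yields $\norm{g'(\theta_0)} \leq 2R$ at almost every $\theta_0$, and integrating gives the result with $L_2 = 2R = 2\sup_{w\in\Cc^\circ}\norm{w}$, which is finite and independent of $y$ because $\Cc^\circ$ is compact.

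The main obstacle is precisely obtaining a bound that does not degenerate as $\norm{y} \to \infty$: the variational-inequality comparisons between the nested sets $\theta K \subseteq \theta' K$ only yield estimates scaling with $\norm{y}$, and it is the radial/normal invariance of the projection in step (ii) that supplies the cancellation making $L_2$ uniform. A secondary technical point I would take care of is the justification of differentiating along the single ray $\theta \mapsto y/\theta$: rather than invoking almost-everywhere differentiability of $P_K$ on $\RR^N$ (which says nothing along a fixed line), I rely on the scalar function $g$ being locally Lipschitz, hence differentiable for almost every $\theta$, and read off $g'(\theta_0)$ together with the existence of the needed directional derivative of $P_K$ directly from the difference quotient of $g$.
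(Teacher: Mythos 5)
Your proof is correct, but it takes a genuinely different route from the paper's. Both arguments share the same opening reduction: writing $\theta G = \sigma_{\theta\Cc^\circ}$ and invoking Moreau's identity to get $y - \Prox_{\theta G}(y) = \Proj_{\theta\Cc^\circ}(y)$. From there the paper simply cites \cite[Proposition 2.3(ii)]{AttouchNewton11}, which gives $\norm{\Prox_{\theta G}(y) - \Prox_{\theta' G}(y)} \leq \abs{\theta - \theta'}\,\norm{y - \Prox_{\theta G}(y)}/\theta$, and then only needs the elementary observation that $\Proj_{\theta\Cc^\circ}(y) \in \theta\Cc^\circ$ to bound $\norm{y - \Prox_{\theta G}(y)} \leq L_2\theta$ via the norm/gauge equivalence of Lemma~\ref{lem:gaugeprop}(iii). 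You instead re-derive the whole uniform estimate from first principles: the dilation identity $\Proj_{\theta K}(y) = \theta \Proj_K(y/\theta)$, almost-everywhere differentiability of the locally Lipschitz curve $\theta \mapsto \theta\Proj_K(y/\theta)$, and — this is the genuinely new ingredient — the cancellation coming from the constancy of $\Proj_K$ along the outward normal ray, which kills the $\norm{y}$-dependence that a naive non-expansiveness estimate would leave behind. Your identification of $D(\pm n_0)=0$ and the $1$-Lipschitz dependence of the directional derivative on its direction are both sound, and reading the existence of $D(-u_0)$ off the scalar differentiability of $g$ neatly sidesteps any appeal to directional differentiability of projections. What each approach buys: the paper's proof is shorter and isolates the parameter-Lipschitz property of the prox in an external reference; yours is self-contained and in effect reproves the needed special case of that reference, at the modest cost of the slightly worse explicit constant $L_2 = 2\sup_{w\in\Cc^\circ}\norm{w}$ where the paper's argument yields the circumradius of $\Cc^\circ$ alone — a difference that is immaterial for the use made of the proposition in verifying Assumption~\ref{assump:L2}.
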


\begin{proof}
From \cite[Proposition 2.3(ii)]{AttouchNewton11}, we have that for any $y$, the function $\theta \mapsto \Prox_{\theta G}(y)$ is such that
\begin{equation}
\label{eq:lipprox}
\norm{\Prox_{\theta G}(y) - \Prox_{\theta' G}(y)} \leq |\theta - \theta'|\norm{y - \Prox_{\theta G}(y)}/\theta ~.
\end{equation}
Now, we have 
\begin{equation}
\label{eq:homogauge}
\theta G(y) = \gamma_{\Cc/\theta}(y) = \sigma_{\theta \Cc^\circ}(y) ~,
\end{equation}
where the first equality follows from positive homogeneity (Lemma~\ref{lem:gaugeprop}(i)) and Definition~\ref{defn:convex-gauge}, and the second equality is a consequence of Lemma~\ref{lem:gaugeprop}(iv) and polarity. 

Applying Moreau identity, we get that
\[
y - \Prox_{\theta G}(y) = y - \Prox_{\sigma_{\theta \Cc^\circ}}(y) = \Proj_{\theta \Cc^\circ}(y) ~.
\]
By virtue of Lemma~\ref{lem:gaugeprop}(iii), there exists a constant $L_2 > 0$, independent of $y$, such that\footnote{The constant $L_2$ can be given explicitly by bounding from below the support function of the inscribed ellipsoid of maximal volume, the so-called John ellipsoid. For symmetric convex bodies, $L_2$ can be made tightest possible. For simplicity, we avoid delving into these technicalities here.}
\begin{equation*}
\label{eq:equivnormgauge}
\norm{y - \Prox_{\theta G}(y)} = \norm{\Proj_{\theta \Cc^\circ}(y)} \leq L_2 \gamma_{\Cc^\circ}\bpa{\Proj_{\theta \Cc^\circ}(y)} ~.
\end{equation*}
Applying \eqref{eq:homogauge} to $\gamma_{\Cc^\circ}$, we get
\begin{equation}
\label{eq:normproxgaugebnd}
\norm{y - \Prox_{\theta G}(y)} \leq L_2 \theta\gamma_{\theta\Cc^\circ}\bpa{\Proj_{\theta \Cc^\circ}(y)} \leq L_2\theta ~,
\end{equation}
where the last inequality follows from Lemma~\ref{lem:gaugeprop}(ii) since obviously $\Proj_{\theta \Cc^\circ}(y) \in \theta\Cc^\circ$. Combining \eqref{eq:lipprox} and \eqref{eq:normproxgaugebnd}, we get the desired result.
\end{proof}

\begin{cor}\label{cor:lipcompproxgauge}
Let $\Cc_i$, $i=1,\ldots,m$, be compact convex sets containing the origin as an interior point, i.e. convex bodies, and $G_i=\gamma_{\Cc_i}$ the associated gauges. For any $\theta$, $\theta' \in ]0,+\infty[^m$, and any $y \in \Yy$, the following holds
\[
\norm{\Prox_{\theta_1 G_1} \circ \cdots \circ\Prox_{\theta_m G_m}(y) - \Prox_{\theta'_1 G_1} \circ \cdots \circ\Prox_{\theta'_m G_m}(y)} \leq \sqrt{m}\max_i L_{2,i} \norm{\theta - \theta'}
\]
where $L_{2,i} > 0$ is the same Lipschitz constant associated to $\Cc_i$ given in Proposition~\ref{prop:lipproxgauge}.
\end{cor}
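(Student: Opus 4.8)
The plan is to establish the bound by a telescoping argument that reduces the multi-parameter statement to the single-parameter estimate of Proposition~\ref{prop:lipproxgauge}, the only extra ingredient being the non-expansiveness of each proximal operator (recalled just before~\eqref{eq-min-gfb}). Write $T_i = \Prox_{\theta_i G_i}$ and $T_i' = \Prox_{\theta'_i G_i}$, so that the two quantities to be compared are $T_1 \circ \cdots \circ T_m(y)$ and $T_1' \circ \cdots \circ T_m'(y)$. First I would introduce, for $k=0,\ldots,m$, the hybrid operators $H_k = T_1 \circ \cdots \circ T_k \circ T_{k+1}' \circ \cdots \circ T_m'$, which interpolate between $H_0 = T_1' \circ \cdots \circ T_m'$ and $H_m = T_1 \circ \cdots \circ T_m$ by swapping the parameter vector from $\theta'$ to $\theta$ one coordinate at a time. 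The triangle inequality then gives $\norm{H_m(y) - H_0(y)} \leq \sum_{k=1}^m \norm{H_k(y) - H_{k-1}(y)}$.

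The core of the argument is bounding each telescoped term. The operators $H_k$ and $H_{k-1}$ differ only in their $k$-th factor, which is $T_k$ in the former and $T_k'$ in the latter, while the inner block $T_{k+1}' \circ \cdots \circ T_m'$ and the outer block $T_1 \circ \cdots \circ T_{k-1}$ are common to both. Setting $w_k = T_{k+1}' \circ \cdots \circ T_m'(y)$ (with $w_m = y$) for the common argument fed to the $k$-th operator, we have $H_k(y) = (T_1 \circ \cdots \circ T_{k-1})\pa{T_k(w_k)}$ and $H_{k-1}(y) = (T_1 \circ \cdots \circ T_{k-1})\pa{T_k'(w_k)}$. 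Since $T_1 \circ \cdots \circ T_{k-1}$ is a composition of non-expansive maps, hence $1$-Lipschitz, and since Proposition~\ref{prop:lipproxgauge} furnishes a Lipschitz constant $L_{2,k}$ that is \emph{uniform in the argument}, I obtain $\norm{H_k(y) - H_{k-1}(y)} \leq \norm{T_k(w_k) - T_k'(w_k)} \leq L_{2,k}\,|\theta_k - \theta'_k|$.

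Summing these estimates yields $\norm{T_1 \circ \cdots \circ T_m(y) - T_1' \circ \cdots \circ T_m'(y)} \leq \sum_{k=1}^m L_{2,k}|\theta_k - \theta'_k| \leq \max_i L_{2,i}\,\norm{\theta - \theta'}_1$, and a final application of Cauchy--Schwarz ($\norm{\cdot}_1 \leq \sqrt{m}\,\norm{\cdot}$) gives the claimed bound. The same conclusion can be reached by induction on $m$ (the base case $m=1$ being exactly Proposition~\ref{prop:lipproxgauge}), splitting off the outermost factor and using non-expansiveness to control one term and the inductive hypothesis for the other; the telescoping presentation is merely a more transparent bookkeeping of the same idea. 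There is no genuine obstacle here: the only points requiring care are arranging the hybrids so that precisely one factor changes per step while the remaining composition absorbs into a $1$-Lipschitz map, and invoking the uniformity in $y$ of the constant $L_{2,k}$, which is exactly the strength of Proposition~\ref{prop:lipproxgauge}.
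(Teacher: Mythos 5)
Your proof is correct and follows essentially the same route as the paper: a telescoping/recursive triangle inequality that changes one parameter at a time, bounding each step by the $1$-Lipschitzness of the unchanged proximal factors together with the $y$-uniform constant $L_{2,k}$ from Proposition~\ref{prop:lipproxgauge}, and finishing with $\norm{\cdot}_1 \leq \sqrt{m}\norm{\cdot}$. The paper phrases it as peeling off the outermost factor recursively rather than via explicit hybrid operators, but as you note yourself, this is the same argument with different bookkeeping.
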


\begin{proof}
Using repeatedly the triangle inequality, Proposition~\ref{prop:lipproxgauge} and the fact that the mapping $y \mapsto \Prox_{\theta_i G_i}(y)$ is 1-Lipschitz \cite{rockafellar1976monotone}, we obtain
\begin{align*}
&\norm{\Prox_{\theta_1 G_1} \circ \cdots \circ \Prox_{\theta_m G_m}(y) - 
\Prox_{\theta'_1 G_1} \circ \cdots \circ\Prox_{\theta'_m G_m}(y)} = \\ 
&\Big\|\pa{\Prox_{\theta_1 G_1} \circ \Prox_{\theta_2 G_2} \circ \cdots \circ\Prox_{\theta_m G_m}(y) - \Prox_{\theta'_1 G_1} \circ \Prox_{\theta_2 G_2} \circ \cdots \circ \Prox_{\theta_m G_m}(y)} + \\
&\pa{\Prox_{\theta'_1 G_1} \circ \Prox_{\theta_2 G_2} \circ \cdots \circ\Prox_{\theta_m G_m}(y) - \Prox_{\theta'_1 G_1} \circ \Prox_{\theta'_2 G_2} \circ \cdots \circ \Prox_{\theta_m G_m}(y)}\Big\| \\
&\leq L_{2,1} |\theta_1 - \theta_1'| + \norm{\Prox_{\theta_2 G_2} \circ \cdots \circ \Prox_{\theta_m G_m}(y) - 
\Prox_{\theta'_2 G_2} \circ \cdots \circ\Prox_{\theta'_m G_m}(y)} \\
&\leq \sum_i L_{2,i} |\theta_i - \theta_i'| \leq \max_{i} L_{2,i} \norm{\theta-\theta'}_1 \leq \sqrt{m}\max_{i} L_{2,i} \norm{\theta-\theta'}
\end{align*}
as claimed.
\end{proof}

\section{Proofs of Section~\ref{sec:formal_diff}}\label{sec:proofs_formal_diff}

\begin{proof}[Proposition~\ref{prop:gen_diff_y}]
  Since \eqref{eq-iterations} is the composition of Lipschitz continuous mappings of $y$ by assumption, applying the chain rule~\cite[Theorem~4 and Remark, Section~4.2.2]{EvansGariepy92} gives the formula.
\end{proof}

\begin{proof}[Proposition~\ref{prop:gen_diff_theta}]
   The argument is exactly the same as that for Proposition~\ref{prop:gen_diff_y} replacing $y$ by $\theta$ where the required Lipschitz continuity assumptions w.r.t. $\theta$ hold true.
%  Deriving formula \eqref{eq-iterations} with respect to $\theta$ gives the proposed formula.
\end{proof}

\begin{proof}[Corollary~\ref{cor:gfb_diff_y}]
We first notice that
$\il{\Dd}_a = (\il{\Dd_{\xi}}, \il{\Dd_{z_1}}, \ldots, \il{\Dd_{z_Q}})$
where
$\il{\Dd_{\xi}} = \JACs{1}{\il{\xi}}(y, \theta)[\de]$ and
$\il{\Dd_{z_k}} = \JACs{1}{\il{z_k}}(y, \theta)[\de]$.
Hence, applying again the chain rule \cite[Theorem~4 and Remark, Section~4.2.2]{EvansGariepy92} to the the sequence of iterates and
using the fact that all involved mappings are Lipschitz, and
$\il{\Dd_x} = \il{\Gamma_a}(\il{\Dd_a}) = \il{\Dd_{\xi}}$
concludes the proof.
\end{proof}

\begin{proof}[Corollary~\ref{cor:gfb_diff_theta}]
Observe that
$\il{\Jj}_a = (\il{\Jj_{\xi}}, \il{\Jj_{z_1}}, \ldots, \il{\Jj_{z_Q}})$
where  $\il{\Jj_\xi} = \JACs{2}{\il{\xi}}(y, \theta)$ and
$\il{\Jj_{z_k}} = \JACs{2}{\il{z_k}}(y, \theta)$.
Arguing as in the proof of Corollary~\ref{cor:gfb_diff_y}, using now that
$\il{\Jj_{x}} = \il{\Gamma_a}(\il{\Jj}_a) = \il{\Jj_{\xi}}$ yields the formula.
\end{proof}

\begin{proof}[Corollary~\ref{cor:pds_diff_y}]
As before, but now with
$\il{\Dd}_a = (\il{\Dd_{\xi}}, \il{\Dd_{\tilde x}}, \ldots, \il{\Dd_{u}})$
where
$\il{\Dd_{\xi}} = \JACs{1}{\il{\xi}}(y, \theta)[\de]$,
$\il{\Dd_{\tilde x}} = \JACs{1}{\il{\tilde x}}(y, \theta)[\de]$,
$\il{\Dd_{u}} = \JACs{1}{\il{u}}(y, \theta)[\de]$,
and
$\il{\Dd_x} = \il{\Gamma_a}(\il{\Dd_a}) = \il{\Dd_{\xi}}$.
The chain rule completes the proof.
\end{proof}

\begin{proof}[Corollary~\ref{cor:pds_diff_theta}]
As before, but now with
$\il{\Jj}_a = (\il{\Jj_{\xi}}, \il{\Jj_{\tilde x}}, \ldots, \il{\Jj_{u}})$
where
$\il{\Jj_{\xi}} = \JACs{1}{\il{\xi}}(y, \theta)$,
$\il{\Jj_{\tilde x}} = \JACs{1}{\il{\tilde x}}(y, \theta)$,
$\il{\Jj_{u}} = \JACs{1}{\il{u}}(y, \theta)$
and
$\il{\Jj_x} = \il{\Gamma_a}(\il{\Jj_a}) = \il{\Jj_{\xi}}$.
The chain rule completes the proof.
\end{proof}

\end{appendix}

\bibliographystyle{siam}
\bibliography{biblio}

\begin{thebibliography}{10}

\bibitem{avron2011randomized}
{\sc H.~Avron and S.~Toledo}, {\em Randomized algorithms for estimating the
  trace of an implicit symmetric positive semi-definite matrix}, J. of the ACM,
  58 (2011).

\bibitem{BauschkeCombettes11}
{\sc H.~Bauschke and P.~Combettes}, {\em Convex Analysis and Monotone Operator
  Theory in Hilbert Spaces}, CMS Books in Mathematics, Springer, 2011.

\bibitem{benazza2005building}
{\sc A.~Benazza-Benyahia and J~Pesquet}, {\em Building robust wavelet
  estimators for multicomponent images using stein's principle}, {{IEEE} Trans.
  Image Process.}, 14 (2005), pp.~1814--1830.

\bibitem{bennett2007netflix}
{\sc J.~Bennett and S.~Lanning}, {\em The {N}etflix prize}, in Proceedings of
  KDD Cup and Workshop, vol.~2007, 2007, p.~35.

\bibitem{blu2007surelet}
{\sc T.~Blu and F.~Luisier}, {\em The {SURE}-{LET} approach to image
  denoising}, {{IEEE} Trans. Image Process.}, 16 (2007), pp.~2778--2786.

\bibitem{cai2009data}
{\sc T.T. Cai and H.H. Zhou}, {\em A data-driven block thresholding approach to
  wavelet estimation}, The Annals of Statistics, 37 (2009), pp.~569--595.

\bibitem{CandesCompletion08}
{\sc E.~J. Cand\`es and B.~Recht}, {\em Exact matrix completion via convex
  optimization}, Found. Comput. Math., 9 (2009), pp.~717--772.

\bibitem{Candes12}
{\sc E.~J. Cand\`es, C.~A. Sing-Long, and J.~D. Trzasko}, {\em Unbiased risk
  estimates for singular value thresholding and spectral estimators}, {{IEEE}
  Trans. Signal Process.}, 61 (2013), pp.~4643--4657.

\bibitem{chambolle2011first}
{\sc A.~Chambolle and T.~Pock}, {\em A first-order primal-dual algorithm for
  convex problems with applications to imaging}, J. of Mathematical Imaging and
  Vision, 40 (2011), pp.~120--145.

\bibitem{chaux2008nonlinear}
{\sc C.~Chaux, L.~Duval, A.~Benazza-Benyahia, and J-C. Pesquet}, {\em A
  nonlinear stein-based estimator for multichannel image denoising}, {{IEEE}
  Trans. Signal Process.}, 56 (2008), pp.~3855--3870.

\bibitem{Combettes2007a}
{\sc P.~L. Combettes and J.-C. Pesquet}, {\em A {D}ouglas-{R}achford splitting
  approach to nonsmooth convex variational signal recovery}, IEEE J. Selected
  Topics in Signal Processing, 1 (2007), pp.~564--574.

\bibitem{CombettesPesquet09}
\leavevmode\vrule height 2pt depth -1.6pt width 23pt, {\em Fixed-Point
  Algorithms for Inverse Problems in Science and Engineering}, Springer-Verlag,
  2011, ch.~Proximal Splitting Methods in Signal Processing, pp.~185--212.

\bibitem{CombettesWajs05}
{\sc P.~L. Combettes and V.~R. Wajs}, {\em Signal recovery by proximal
  forward-backward splitting}, {{SIAM} J. Multiscale Model. Simul.}, 4 (2005),
  p.~1168.

\bibitem{daubechies1988orthonormal}
{\sc I.~Daubechies}, {\em Orthonormal bases of compactly supported wavelets},
  Communications on pure and applied mathematics, 41 (1988), pp.~909--996.

\bibitem{dds2011nlmsap}
{\sc C.-A. Deledalle, V.~Duval, and J.~Salmon}, {\em {Non-local Methods with
  Shape-Adaptive Patches (NLM-SAP)}}, J. of Mathematical Imaging and Vision,
  (2011), pp.~1--18.

\bibitem{deledalle2013score}
{\sc C.-A. Deledalle, G.~Peyr{\'e}, and J.~Fadili}, {\em Stein consistent risk
  estimator ({SCORE}) for hard thresholding}, in arXiv:1301.5874, 2013.
\newblock Presented at SPARS'2013.

\bibitem{deledalle2010pnl}
{\sc C.-A. Deledalle, F.~Tupin, and L.~Denis}, {\em {Poisson NL means:
  Unsupervised non local means for Poisson noise}}, in {{IEEE} Int. Conf. Image
  Process.}, 2010, pp.~801--804.

\bibitem{deledalle2012proximal}
{\sc C.-A. Deledalle, S.~Vaiter, G.~Peyr{\'e}, J.~Fadili, and C.~Dossal}, {\em
  Proximal splitting derivatives for risk estimation}, in J. of Physics:
  Conference Series 386 012003, IOP Publishing, 2012.

\bibitem{deledalle2012risk}
\leavevmode\vrule height 2pt depth -1.6pt width 23pt, {\em Risk estimation for
  matrix recovery with spectral regularization}, in arXiv:1205.1482, 2012.
\newblock Presented at ICML'2012 workshop on Sparsity, Dictionaries and
  Projections in Machine Learning and Signal Processing.

\bibitem{deledalle2012unbiased}
\leavevmode\vrule height 2pt depth -1.6pt width 23pt, {\em Unbiased risk
  estimation for sparse analysis regularization}, in {{IEEE} Int. Conf. Image
  Process.}, 2012, pp.~3053--3056.

\bibitem{dong1994stochastic}
{\sc S.J. Dong and K.F. Liu}, {\em Stochastic estimation with {$Z_2$} noise},
  Physics Letters B, 328 (1994), pp.~130--136.

\bibitem{donoho1995adapting}
{\sc D.L. Donoho and I.M. Johnstone}, {\em {Adapting to Unknown Smoothness Via
  Wavelet Shrinkage}}, {J. of the American Statistical Association}, 90 (1995),
  pp.~1200--1224.

\bibitem{dossal-dof}
{\sc C.~Dossal, M.~Kachour, M.J. Fadili, G.~Peyr{\'e}, and C.~Chesneau}, {\em
  The degrees of freedom of the lasso for general design matrix}, Statistica
  Sinica, 23 (2013), pp.~809--828.

\bibitem{duval2011abv}
{\sc V.~Duval, J-F. Aujol, and Y.~Gousseau}, {\em A bias-variance approach for
  the non-local means}, {{SIAM} J. Imaging Sci.}, 4 (2011), pp.~760--788.

\bibitem{edelman-handout}
{\sc A.~Edelman}, {\em Matrix {J}acobians with wedge products}, MIT Handout,
  (2005).

\bibitem{efron1986biased}
{\sc B.~Efron}, {\em How biased is the apparent error rate of a prediction
  rule?}, {J. of the American Statistical Association}, 81 (1986),
  pp.~461--470.

\bibitem{eldar-gsure}
{\sc Y.~C. Eldar}, {\em Generalized {SURE} for exponential families:
  Applications to regularization}, {{IEEE} Trans. Signal Process.}, 57 (2009),
  pp.~471--481.

\bibitem{EvansGariepy92}
{\sc L.~C. Evans and R.~F. Gariepy}, {\em Measure theory and fine properties of
  functions}, CRC Press, 1992.

\bibitem{Fazel02}
{\sc M.~Fazel}, {\em Matrix Rank Minimization with Applications}, PhD thesis,
  Stanford University, 2002.

\bibitem{GilbargTrudinger98}
{\sc D.~Gilbarg and N.~S. Trudinger}, {\em Elliptic Partial Differential
  Equations of Second Orde}, vol.~517 of Classics in Mathematics, Springer,
  2nd~ed., 1998.

\bibitem{girard1989fast}
{\sc A~Girard}, {\em A fast {Monte-Carlo} cross-validation procedure for large
  least squares problems with noisy data}, Numerische Mathematik, 56 (1989),
  pp.~1--23.

\bibitem{giryes-proj-gsure}
{\sc R.~Giryes, M.~Elad, and Y.C. Eldar}, {\em The projected {GSURE} for
  automatic parameter tuning in iterative shrinkage methods}, Applied and
  Computational Harmonic Analysis, 30 (2011), pp.~407--422.

\bibitem{golub1979generalized}
{\sc G.H. Golub, M.~Heath, and G.~Wahba}, {\em Generalized cross-validation as
  a method for choosing a good ridge parameter}, Technometrics,  (1979),
  pp.~215--223.

\bibitem{griewank2008evaluating}
{\sc A.~Griewank and A.~Walther}, {\em Evaluating derivatives: principles and
  techniques of algorithmic differentiation}, SIAM, 2008.

\bibitem{AttouchNewton11}
{\sc H.Attouch and B.~F. Svaiter}, {\em A continuous dynamical {N}ewton-like
  approach to solving monotone inclusions}, {SIAM} J. Control Optim., 49
  (2011), pp.~574--598.

\bibitem{hiriart1996convex}
{\sc J.-B. Hiriart-Urruty and C.~Lemar{\'e}chal}, {\em Convex Analysis And
  Minimization Algorithms}, vol.~I and II, Springer, 2001.

\bibitem{hudson1978nie}
{\sc H.M. Hudson}, {\em A natural identity for exponential families with
  applications in multiparameter estimation}, { The Annals of Statistics}, 6
  (1978), pp.~473--484.

\bibitem{hudson1998maximum}
{\sc H.M. Hudson and T.~Lee}, {\em Maximum likelihood restoration and choice of
  smoothing parameter in deconvolution of image data subject to poisson noise},
  Computational statistics \& data analysis, 26 (1998), pp.~393--410.

\bibitem{hutchinson1989stochastic}
{\sc M.F. Hutchinson}, {\em A stochastic estimator of the trace of the
  influence matrix for laplacian smoothing splines}, Communications in
  Statistics-Simulation and Computation, 18 (1989), pp.~1059--1076.

\bibitem{kato2009degrees}
{\sc K.~Kato}, {\em On the degrees of freedom in shrinkage estimation}, J. of
  Multivariate Analysis, 100 (2009), pp.~1338--1352.

\bibitem{komodakis2014playing}
{\sc Nikos Komodakis and Jean-Christophe Pesquet}, {\em Playing with duality:
  An overview of recent primal-dual approaches for solving large-scale
  optimization problems}, Preprint arXiv:1406.5429,  (2014).

\bibitem{lewis-twice-spectral}
{\sc A.S. Lewis and H.S. Sendov}, {\em Twice differentiable spectral
  functions}, {SIAM} J. Matrix Anal. and Appl., 23 (2001), pp.~368--386.

\bibitem{lewis2009nonsmooth}
{\sc A.~S. Lewis and M.~L. Overton}, {\em Nonsmooth optimization via
  quasi-newton methods}, Mathematical Programming, 141 (2013), pp.~135--163.

\bibitem{li1985sure}
{\sc K-C. Li}, {\em From {S}tein's unbiased risk estimates to the method of
  generalized cross validation}, Ann. Statist., 13 (1985), pp.~1352--1377.

\bibitem{luisier2010sure}
{\sc F.~Luisier, T.~Blu, and M.~Unser}, {\em {SURE-LET} for orthonormal
  wavelet-domain video denoising}, {{IEEE} Trans. Circuits Syst. Video
  Technol.}, 20 (2010), pp.~913--919.

\bibitem{mallat2009wavelet}
{\sc S.~Mallat}, {\em A wavelet tour of signal processing, 3rd edition},
  Elsevier, 2009.

\bibitem{naumann2008optimal}
{\sc U.~Naumann}, {\em Optimal jacobian accumulation is {NP}-complete},
  Mathematical Programming, 112 (2008), pp.~427--441.

\bibitem{pesquet-deconv}
{\sc J-C. Pesquet, A.~Benazza-Benyahia, and C.~Chaux}, {\em A {SURE} approach
  for digital signal/image deconvolution problems}, {{IEEE} Trans. Signal
  Process.}, 57 (2009), pp.~4616--4632.

\bibitem{pesquet1997new}
{\sc J-C Pesquet and D.~Leporini}, {\em A new wavelet estimator for image
  denoising}, in International Conference on Image Processing and its
  Applications, IET, 1997, pp.~249--253.

\bibitem{raguet-gfb}
{\sc H.~Raguet, J.~Fadili, and G.~Peyr{\'e}}, {\em A generalized
  forward-backward splitting}, {{SIAM} J. Imaging Sci.}, 6 (2013),
  pp.~1199--1226.

\bibitem{ramani2008montecarlosure}
{\sc S.~Ramani, T.~Blu, and M.~Unser}, {\em Monte-{C}arlo {SURE}: a black-box
  optimization of regularization parameters for general denoising algorithms},
  {{IEEE} Trans. Image Process.}, 17 (2008), pp.~1540--1554.

\bibitem{ramani2012regularization}
{\sc S.~Ramani, Z.~Liu, J.~Rosen, J-F. Nielsen, and J.~A Fessler}, {\em
  Regularization parameter selection for nonlinear iterative image restoration
  and {MRI} reconstruction using {GCV} and {SURE}-based methods}, {{IEEE}
  Trans. Image Process.}, 21 (2012), pp.~3659--3672.

\bibitem{ramani2012iterative}
{\sc S.~Ramani, J.~Rosen, Z.~Liu, and J.A. Fessler}, {\em Iterative weighted
  risk estimation for nonlinear image restoration with analysis priors}, Proc.
  SPIE Elec. Img, 8296 (2012), pp.~82960N1--12.

\bibitem{raphan2008optimal}
{\sc M.~Raphan and E.~P. Simoncelli}, {\em Optimal denoising in redundant
  representations}, {{IEEE} Trans. Image Process.}, 17 (2008), pp.~1342--1352.

\bibitem{raphan2011least}
\leavevmode\vrule height 2pt depth -1.6pt width 23pt, {\em Least squares
  estimation without priors or supervision}, Neural computation, 23 (2011),
  pp.~374--420.

\bibitem{rice1986choice}
{\sc J.~Rice}, {\em Choice of smoothing parameter in deconvolution problems},
  Contemporary Mathematics, 59 (1986), pp.~137--151.

\bibitem{rockafellar1976monotone}
{\sc R.~T. Rockafellar}, {\em Monotone operators and the proximal point
  algorithm}, {SIAM} J. Control Optim., 14 (1976), pp.~877--898.

\bibitem{roosta2013improved}
{\sc F.~Roosta-Khorasani and U.~Ascher}, {\em Improved bounds on sample size
  for implicit matrix trace estimators}, Preprint arXiv:1308.2475,  (2013).

\bibitem{rudin1992nonlinear}
{\sc L.I. Rudin, S.~Osher, and E.~Fatemi}, {\em Nonlinear total variation based
  noise removal algorithms}, Physica D: Nonlinear Phenomena, 60 (1992),
  pp.~259--268.

\bibitem{shen2002adaptive}
{\sc X.~Shen and J.~Ye}, {\em Adaptive model selection}, {J. of the American
  Statistical Association}, 97 (2002), pp.~210--221.

\bibitem{solo2010threshold}
{\sc V.~Solo and M.~Ulfarsson}, {\em Threshold selection for group sparsity},
  in {IEEE Int. Conference on Acoustics, Speech, and Signal Processing}, 2010,
  pp.~3754--3757.

\bibitem{stein1981estimation}
{\sc C.M. Stein}, {\em Estimation of the mean of a multivariate normal
  distribution}, The Annals of Statistics, 9 (1981), pp.~1135--1151.

\bibitem{Sun02}
{\sc D.~Sun and J.~Sun}, {\em Nonsmooth matrix valued functions defined by
  singular values}, tech. report, Department of Decision Sciences, National
  University of Singapore, 2003.

\bibitem{tibshirani2011solution}
{\sc R.J. Tibshirani and J.~Taylor}, {\em The solution path of the generalized
  lasso}, The Annals of Statistics, 39 (2011), pp.~1335--1371.

\bibitem{tibshirani2012dof}
\leavevmode\vrule height 2pt depth -1.6pt width 23pt, {\em Degrees of freedom
  in lasso problems}, The Annals of Statistics, 40 (2012), pp.~1198--1232.

\bibitem{vaiter2012local}
{\sc S.~Vaiter, C.-A. Deledalle, G.~Peyr{\'e}, C.~Dossal, and J.~Fadili}, {\em
  Local behavior of sparse analysis regularization: Applications to risk
  estimation}, Applied and Computational Harmonic Analysis, 35 (2013),
  pp.~433--451.

\bibitem{vaiter2014dof}
{\sc S.~Vaiter, C.-A. Deledalle, G.~Peyr{\'e}, J.~M Fadili, and C.~Dossal},
  {\em The degrees of freedom of partly smooth regularizers}, Preprint
  arXiv:1404.5557,  (2014).

\bibitem{vandeville2009sure}
{\sc D.~{Van De Ville} and M.~Kocher}, {\em {SURE}-based {Non-Local Means}},
  {{IEEE} Signal Process. Lett.}, 16 (2009), pp.~973--976.

\bibitem{vandeville2011non}
\leavevmode\vrule height 2pt depth -1.6pt width 23pt, {\em {Non-local means
  with dimensionality reduction and SURE-based parameter selection}}, {{IEEE}
  Trans. Image Process.}, 9 (2011), pp.~2683--2690.

\bibitem{vonesch2008sure}
{\sc C.~Vonesch, S.~Ramani, and M.~Unser}, {\em Recursive risk estimation for
  non-linear image deconvolution with a wavelet-domain sparsity constraint}, in
  {{IEEE} Int. Conf. Image Process.}, 2008, pp.~665--668.

\bibitem{ye1998measuring}
{\sc J.~Ye}, {\em On measuring and correcting the effects of data mining and
  model selection}, {J. of the American Statistical Association},  (1998),
  pp.~120--131.

\bibitem{yuan2006model}
{\sc M.~Yuan and Y.~Lin}, {\em Model selection and estimation in regression
  with grouped variables}, { J. of the Royal Statistical Society. Series B
  (Methodological)}, 68 (2006), pp.~49--67.

\bibitem{zou2007degrees}
{\sc H.~Zou, T.~Hastie, and R.~Tibshirani}, {\em On the ``degrees of freedom''
  of the lasso}, The Annals of Statistics, 35 (2007), pp.~2173--2192.

\end{thebibliography}

\end{document}